\newtheorem{theorem}{Theorem}[section]
\newtheorem{lemma}[theorem]{Lemma}
\newtheorem{remark}[theorem]{Remark}
\newtheorem{corollary}[theorem]{Corollary}
\newtheorem{proposition}[theorem]{Proposition}
\def\dd{\mathrm{d}}
\def\eps{\varepsilon}
\def\Z{\mathbf{Z}}
\def\N{\mathbf{N}}
\def\R{\mathbf{R}}
\def\Hom{\mathrm{Hom}}
\def\into{\hookrightarrow}
\def\la{\lambda}
\def\supp{\mathrm{supp}}
\def\KK{\mathcal{K}}
\def\LL{\bar{L}}
\def\PF{\mathcal{P}_{\mathrm{f}}}
\date{September 25, 2020.}
\begin{document}
\centerline{}

\title[Simplicial Random Variables]{Simplicial Random Variables}

\author[I.~Marin]{Ivan Marin}
\address{LAMFA, UMR CNRS 7352, Universit\'e de Picardie-Jules Verne, Amiens, France}
\email{ivan.marin@u-picardie.fr}
%\subjclass[2010]{}
% MSC2010 : sections 55, 28, 60.
\medskip

\begin{abstract} 
We introduce a new `geometric realization' of an (abstract) simplicial complex, inspired by probability
theory. This space (and its completion) is a metric space, which has the right (weak) homotopy type,
and which can be compared with the usual geometric realization through a natural map, which has probabilistic meaning : it associates to a random variable its probability mass function. This `probability law' map
is proved to be a (Serre) fibration and a (weak) homotopy equivalence. 
\end{abstract}

\maketitle

\tableofcontents

\section{Introduction and main results}

In this paper we consider a new `geometric realization' of an (abstract) simplicial complex, inspired by probability
theory. This space is a metric space, which has the right (weak) homotopy type,
and can be compared with the usual geometric realization through a map, which is very natural
in probabilistic terms : it associates to a random variable its probability mass function. This `probability law' function
is proved to be a (Serre) fibration and a (weak) homotopy equivalence.  This construction passes to the completion,
and has nice functorial properties.

We specify the details now. Let $S$ be a set, and $\PF(S)$ the set of its finite subsets. We set $\PF^*(S) = \PF(S) \setminus \{ \emptyset \}$. Recall that an (abstract) simplicial complex is a collection of subsets $\mathcal{K} \subset \PF^*(S) $ with the
property that, for all $X \in \mathcal{K}$ and $Y \in \PF^*(S)$, $Y \subset X \Rightarrow Y \in \mathcal{K}$. The elements
of $\mathcal{K}$ are called its faces, and the vertices of $\mathcal{K}$ are the union of the
elements of $\mathcal{K}$.

We endow $S$ with the discrete metric of diameter $1$, and with the Borel $\sigma$-algebra associated to this
topology. We let $\Omega$ denote a nonatomic standard  probability space with measure $\la$. Recall that all such probability spaces are isomorphic and can be identified in particular with any hypercube $[0,1]^n$, $n \geq 1$ endowed with the Lebesgue measure. We define $L(\Omega,S)$ as
the set of random variables $\Omega \to S$, that is the set of measurable maps $\Omega \to S$ modulo the equivalence relation $f \equiv g$ if $f$ and $g$ agree almost everywhere, that is $\la(\{ x; f(x) = g(x) \}) = 0$. We consider it as a metric space,
endowed with the metric 
$$
d(f,g) = \int_{\Omega}d(f(t),g(t)) \dd t
= \la\left(\{ x\in \Omega ; f(x) \neq g(x) \}\right).
$$

We define $L(\Omega,\mathcal{K})$ as the subset of $L(\Omega,S)$ made of the (equivalence
classes of) measurable maps
$f : \Omega \to S$ such that $\{ s \in S \ | \ \la(f^{-1}(\{ s \})) > 0 \} \in \mathcal{K}$.

Recall that the (usual) `geometric' realization of $\mathcal{K}$ is defined as
$$
|\mathcal{K}| = \{ t : S \to [0,1] \ | \ \{ s \in S ; t_s > 0 \} \in \mathcal{K} \ \& \ \sum_{s \in S} t_s = 1 \}
$$
and that its topology is given by the direct limit of the $[0,1]^A$ for $A \in \PF(S)$.
There is a natural map $L(\Omega,\mathcal{K}) \to |\mathcal{K}|$
which associates to $f : \Omega \to \mathcal{K}$
the element $t : S \to [0,1]$ defined by $t_s = \la(f^{-1}(\{ s \}))$. In probabilistic
terms, it associates to the random variable $f$ its probability law, or probability mass function. We denote $|\KK|_1$
the same set as $|\KK|$, but with the topology defined by the metric $|\alpha- \beta|_1 = \sum_{s \in S} |\alpha(s) - \beta(s)|$.
We denote $\overline{|\KK|_1}$ its completion as a metric space.

It is easily checked that, unless $S$ is finite, $L(\Omega,\mathcal{K})$
is not in general closed in $L(\Omega,S)$, and therefore not complete.
We denote $\LL(\Omega,\KK)$ its closure inside $L(\Omega,S)$. The `probability law' map $\Psi : L(\Omega,\KK) \to |\KK|_1$
is actually continuous, and can be extended to a map $\overline{\Psi} : \LL(\Omega,\KK) \to \overline{|\KK|}_1$.
Keane's Theorem about the contractibility of $\mathrm{Aut}(\Omega)$ (see \cite{KEANE}) easily implies that these maps have contractible fibers. The goal of this note is
to specify the homotopy-theoretic features of them. We get the following results.

\begin{theorem} \label{theo:maintheorem} {\ }
\begin{enumerate}
\item The map $L(\Omega,\mathcal{K}) \to \overline{L}(\Omega,\mathcal{K})$
is a weak homotopy equivalence.
\item The `probability law' map $L(\Omega,\KK) \to |\KK|_1$ is a Serre fibration and a weak homotopy
equivalence. It admits a continuous global section.
\item The `probability law' map $\LL(\Omega,\KK) \to \overline{|\KK|_1}$ is a Serre fibration and a weak homotopy
equivalence. It admits a continuous global section.
\item $L(\Omega,\KK)$ and $\LL(\Omega,\KK)$ have the same weak homotopy type as the `geometric realization'
$|\KK|$ of $\KK$.
\end{enumerate}
\end{theorem}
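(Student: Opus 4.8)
The plan is to deduce statement (4) formally from parts (1) and (2) together with one comparison between the two topologies borne by the underlying set of $|\KK|$. Parts (1) and (2) already supply weak homotopy equivalences $L(\Omega,\KK)\hookrightarrow\LL(\Omega,\KK)$ and $\Psi:L(\Omega,\KK)\to|\KK|_1$, so $L(\Omega,\KK)$, $\LL(\Omega,\KK)$ and $|\KK|_1$ all share a single weak homotopy type. It therefore suffices to show that the identity map $j:|\KK|\to|\KK|_1$ is a weak homotopy equivalence. This map is continuous, since the direct-limit topology is finer than the $\ell^1$-metric topology while the two restrict to the same topology on every closed simplex $[0,1]^A$.

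To prove that $j$ is a weak homotopy equivalence I would show that every compact subset of $|\KK|_1$ can be pushed, within $|\KK|_1$, into a finite subcomplex on which the two topologies coincide. For $\eps>0$ set $r_\eps(\alpha)_s=\max(\alpha_s-\eps,0)$ and let $R_\eps(\alpha)=r_\eps(\alpha)/\|r_\eps(\alpha)\|_1$ wherever the denominator is nonzero. For any $\alpha\in|\KK|$ the support of $R_\eps(\alpha)$ is contained in that of $\alpha$, hence is a face, so $R_\eps$ takes values in $|\KK|$; it is continuous for the $\ell^1$-metric wherever defined, and the straight-line homotopy $(\alpha,s)\mapsto(1-s)\alpha+sR_\eps(\alpha)$ remains in $|\KK|_1$ because along it the support never exceeds that of $\alpha$.

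Given $f:S^n\to|\KK|_1$, its image is compact, so the continuous ($1$-Lipschitz) function $\alpha\mapsto\max_s\alpha_s$ attains a strictly positive minimum $\mu$ on $f(S^n)$; choosing $\eps<\mu$ makes $R_\eps\circ f$ well defined. A short $\ell^1$-convergence argument shows that for each fixed $\eps$ only finitely many vertices $s$ satisfy $\max_{\alpha\in f(S^n)}\alpha_s\ge\eps$, so $R_\eps\circ f$ lands in the finite subcomplex spanned by those vertices, while the straight-line homotopy gives $f\simeq R_\eps\circ f$ in $|\KK|_1$. Since the metric and direct-limit topologies agree on that finite subcomplex, $R_\eps\circ f$ is also continuous into $|\KK|$; tracking the basepoint along the deformation, this shows $j_*:\pi_n(|\KK|)\to\pi_n(|\KK|_1)$ is surjective, and applying the same deformation to a homotopy $S^n\times[0,1]\to|\KK|_1$ (again compact) yields injectivity. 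Thus $j$ is a weak homotopy equivalence, and composing it with the equivalences of (1) and (2) proves (4).

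I expect the main obstacle to be the key lemma of the second and third paragraphs. Unlike the direct-limit topology, the $\ell^1$-metric topology does \emph{not} confine compact sets to finite subcomplexes — a sequence running a short way along ever-higher edges toward a common vertex converges in $\ell^1$ — so one cannot simply invoke cellular approximation. The real work is to check that the truncation–renormalization deformation is genuinely continuous for the $\ell^1$-metric on the compact set, that the normalizing denominator stays bounded away from $0$ (which is exactly what the lower bound $\mu$ on the maximal coordinate secures), and that supports remain faces throughout, so that the entire deformation takes place inside $|\KK|_1$.
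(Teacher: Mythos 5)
Your proposal addresses only part (4) of the theorem, taking parts (1) and (2) as given. Relative to the full statement this is a substantial gap: parts (1)--(3) are where essentially all of the work of the paper lies --- the compactness/projection argument of Section 2 for (1), and for (2) and (3) the fibration machinery of Sections 3--4 (the reparametrization map $\mathbf{g}$, the lifting homotopy $\Phi$, the reduction lemmas on cardinality and separability, and the explicit construction of a global section). None of that is reproduced or even sketched, so as a proof of the theorem as stated the proposal is incomplete; what it actually establishes is the implication from (1) and (2) to (4).

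For part (4) itself, your route genuinely differs from the paper's. The paper simply cites Dowker (with Milnor's later proof) for the fact that the identity $|\KK| \to |\KK|_1$ is a homotopy equivalence --- indeed a strong one --- and then composes with the weak equivalences of (1) and (2). You instead give a direct elementary proof that this comparison map is a weak homotopy equivalence, by truncating and renormalizing so as to push compact subsets of $|\KK|_1$ into finite subcomplexes, where the metric and weak topologies agree. That argument is essentially sound: the lower bound $\mu=\min_{\alpha\in f(S^n)}\max_s\alpha_s>0$, the finiteness of $\{ s \ : \ \max_{\alpha\in f(S^n)}\alpha_s\geq\eps \}$, and the support estimates all check out. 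Two points deserve more care than you give them. First, for injectivity you must also homotope the original maps $f_i:S^n\to|\KK|$ to $R_\eps\circ f_i$ \emph{inside} $|\KK|$ with its weak topology; this uses the standard fact that a compact subset of $|\KK|$ is contained in a finite subcomplex, so that the straight-line homotopy is continuous for the weak topology --- this fact is not available in $|\KK|_1$, as you yourself observe, so it must be invoked on the $|\KK|$ side explicitly. Second, the basepoint bookkeeping should be done honestly, for instance by basing at a vertex (which $R_\eps$ fixes) or by a change-of-basepoint argument. What your approach buys is self-containedness, avoiding the appeal to Dowker's theorem; what it loses is the \emph{strong} homotopy equivalence $|\KK|\to|\KK|_1$ that the paper records.
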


In other terms, in the commutative diagram below, the vertical maps are Serre fibrations, and all the maps involved
are weak homotopy equivalences, the map $|\KK| \to |\KK|_1$ being in addition a strong homotopy equivalence.
Actually, when $\KK$ is finite, we prove that all the maps are strong homotopy equivalences (see Theorem \ref{theo:fibrations}).

$$
\xymatrix{
 & L(\Omega,\KK) \ar@{^(->}[r] \ar[d]_{\Psi_{\KK}}  & \LL(\Omega,\KK) \ar[d]_{\bar{\Psi}_{\KK}} \\
|\KK| \ar[r]_{}  & |\KK|_1 \ar@{^(->}[r] & \overline{|\KK|_1} 
}
$$

We now comment on the functorial properties of this construction. By definition, a morphism
$\varphi : \mathcal{K}_1 \to \mathcal{K}_2$ between simplicial complexes is a map
from the set $\bigcup \mathcal{K}_1$ of vertices of $\KK_1$ to the set of vertices
of $\KK_2$ with the property that $\forall F \in \mathcal{K}_1 \ \ \varphi(F) \in \KK_2$. We denote
$\mathbf{Simp}$ the corresponding category of simplicial complexes. For such an abstract simplicial complex $\mathcal{K}$, our space $L(\Omega,\mathcal{K})$ as for ambient space $L(\Omega,S)$ with $S = \bigcup \mathcal{K}$ the set of vertices of $\mathcal{K}$.

Let $\mathbf{Set}$ denote the category of sets and
$\mathbf{Met}_1$ denote the full subcategory of the category of metric spaces and contracting maps
made of the spaces of diameter at most $1$. Here a map $f : X \to Y$ between two metric spaces is called contracting
if $\forall a,b \in X \ d(f(a),f(b)) \leq d(a,b)$. 
 Let $\mathbf{CMet}_1$ be the full subcategory of $\mathbf{Met}_1$
made of complete metric spaces. There is a completion functor $Comp : \mathbf{Met}_1 \to \mathbf{CMet}_1$
which associates to each metric space its completion.
Then $L(\Omega,\bullet) : X \leadsto L(\Omega,X)$
defines a functor $\mathbf{Set} \to \mathbf{CMet}_1$ (see \cite{CCS}). It can be decomposed as $L(\Omega,\bullet) = Comp \circ L_{\mathrm{f}}(\Omega,
\bullet)$ where $L_{\mathrm{f}}(\Omega,S)$ is the subspace of $L(\Omega,S)$ made of the (equivalence classes of) functions $f : \Omega \to S$ of essentially finite image, that is such that there exists $S_0 \subset S$ finite such that $\sum_{s \in S_0} \la(f^{-1}(\{ s \})) = 1$.

We prove in section \ref{sect:functorial} below that our simplicial constructions have
similar functorial properties, which can be summed up as follows.

\begin{proposition} \label{prop:functorial}
 $L(\Omega,\bullet)$ and $\LL(\Omega,\bullet)$ define functors
$\mathbf{Simp} \to \mathbf{Met}_1$ and $\mathbf{Simp} \to \mathbf{CMet}_1$, with
the property that $\LL(\Omega,\bullet) = Comp \circ L(\Omega,\bullet)$.
\end{proposition}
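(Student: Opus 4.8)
The plan is to build everything on the already-established functor $L(\Omega,\bullet):\mathbf{Set}\to\mathbf{CMet}_1$ of \cite{CCS}, whose action on a set map $g:S_1\to S_2$ is post-composition $L(\Omega,g):f\mapsto g\circ f$. This map is well defined (the target being discrete, measurability is preserved) and contracting, since $\{x:g(f(x))\neq g(f'(x))\}\subseteq\{x:f(x)\neq f'(x)\}$ gives $d(g\circ f,g\circ f')\le d(f,f')$. A morphism $\varphi:\KK_1\to\KK_2$ is by definition a map of vertex sets $\varphi:S_1\to S_2$ (with $S_i=\bigcup\KK_i$) carrying faces to faces, so it is in particular a set map, and I will simply define $L(\Omega,\varphi)$ and $\LL(\Omega,\varphi)$ as the appropriate restrictions of $L(\Omega,\varphi):L(\Omega,S_1)\to L(\Omega,S_2)$.

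The one step with genuine content is checking that these restrictions land in the right subspaces. First I would record the support computation: for $f\in L(\Omega,\KK_1)$, essentially valued in its support $F=\{s:\la(f^{-1}(\{s\}))>0\}\in\KK_1$, one has $(\varphi\circ f)^{-1}(\{t\})=f^{-1}(\varphi^{-1}(\{t\}))$, whence $\la((\varphi\circ f)^{-1}(\{t\}))=\sum_{s\in F,\ \varphi(s)=t}\la(f^{-1}(\{s\}))$ and the essential support of $\varphi\circ f$ is exactly $\varphi(F)$. Since $F\in\KK_1$ and $\varphi$ is simplicial, $\varphi(F)\in\KK_2$, so $\varphi\circ f\in L(\Omega,\KK_2)$; thus $L(\Omega,\varphi)$ restricts to a map $L(\Omega,\KK_1)\to L(\Omega,\KK_2)$. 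I expect this to be the main (indeed the only non-formal) obstacle: it is exactly here that one must use that the elements of $L(\Omega,\KK)$ are essentially finitely valued, equivalently essentially valued in a face. Otherwise mass spread over a null-supported, $\varphi$-collapsing family of vertices could create essential support outside $\varphi(F)$ and break well-definedness.

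For the completed version I would argue by continuity rather than by a direct support computation. The map $L(\Omega,\varphi):L(\Omega,S_1)\to L(\Omega,S_2)$ is $1$-Lipschitz on the complete space $L(\Omega,S_1)$ and sends the dense subset $L(\Omega,\KK_1)$ of $\LL(\Omega,\KK_1)$ into $L(\Omega,\KK_2)\subseteq\LL(\Omega,\KK_2)$; since $\LL(\Omega,\KK_2)$ is closed, the image of the closure lies in the closure of the image, so $L(\Omega,\varphi)$ restricts to a contracting map $\LL(\Omega,\varphi):\LL(\Omega,\KK_1)\to\LL(\Omega,\KK_2)$. Functoriality (preservation of identities and of composition) in both cases is inherited verbatim from the $\mathbf{Set}$-level functor, the morphism maps being mere restrictions of $f\mapsto\varphi\circ f$; and the codomains are correct because $L(\Omega,\KK)$ has diameter $\le1$, while $\LL(\Omega,\KK)$, being closed in the complete space $L(\Omega,S)$, is complete.

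Finally, for the identity $\LL(\Omega,\bullet)=Comp\circ L(\Omega,\bullet)$ I would invoke the universal property of the completion. On objects, $\LL(\Omega,\KK)$ is by definition the closure of $L(\Omega,\KK)$ inside the complete space $L(\Omega,S)$, hence a complete space containing $L(\Omega,\KK)$ as a dense isometric subspace; by uniqueness of the completion it is canonically identified with $Comp(L(\Omega,\KK))$. On morphisms, both $\LL(\Omega,\varphi)$ and $Comp(L(\Omega,\varphi))$ are contracting maps whose restriction to the dense subspace $L(\Omega,\KK_1)$ equals $L(\Omega,\varphi)$, so they coincide by uniqueness of the continuous extension. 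Under this canonical identification the two functors are equal, which is the asserted naturality.
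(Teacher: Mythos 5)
Your proof is correct, and it diverges from the paper's in one substantive way: the treatment of the completed functor $\LL(\Omega,\bullet)$. The paper first establishes Lemma~\ref{lem:caractfLbar} (an element $f$ of $L(\Omega,S)$ lies in $\LL(\Omega,\KK)$ iff every nonempty finite subset of its essential image $f(\Omega)$ is a face), and then verifies membership of $\varphi\circ f$ in $\LL(\Omega,\KK_2)$ by hand: any finite $F\subset\varphi(f(\Omega))$ is $\varphi(F')$ for some finite $F'\subset f(\Omega)$, which is a face of $\KK_1$ by the lemma, so $F\in\KK_2$. You instead use a soft topological argument --- the $1$-Lipschitz map $L(\Omega,\varphi)$ sends the closure of $L(\Omega,\KK_1)$ into the closure of its image, which sits inside the closed set $\LL(\Omega,\KK_2)$ --- and you handle the identity $\LL(\Omega,\bullet)=Comp\circ L(\Omega,\bullet)$ by the universal property of completion plus uniqueness of continuous extension, where the paper only says this is checked ``immediately.'' Your route is shorter and makes the $Comp$ statement genuinely transparent, at the cost of giving no intrinsic description of which random variables lie in $\LL(\Omega,\KK)$; the paper's route is heavier here but its Lemma~\ref{lem:caractfLbar} is reused throughout the rest of the article, so the investment is not wasted. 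For the uncompleted functor the two arguments coincide (essential image of $\varphi\circ f$ equals $\varphi(f(\Omega))$, a face of $\KK_2$), and you are right to flag that this computation silently uses that $f$ is essentially supported on its finite essential image --- a point the paper also relies on without comment, via \cite{CCS}.
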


\bigskip

{\bf Acknowledgements.} I thank D. Chataur and A. Rivière for discussions and references. I foremost thank an anonymous referee for a careful check.

\section{Simplicial properties and completion}

In this section we prove part (1) of Theorem \ref{theo:maintheorem}.
We start by proving the
functorial properties stated in the introduction.

\subsection{Functorial properties }
We denote, as in the previous section,
$\bar{L}(\Omega,\mathcal{K})$ the closure of
$L(\Omega,\mathcal{K})$ inside $L(\Omega,S)$. As a closed subset of a complete metric space,
it is a complete metric space.
For any $f \in L(\Omega,S)$, we denote
$$
f(\Omega) = \{ s \in S \ | \ \la(f^{-1}(\{ s \} ))> 0 \}
$$ 
the essential image of an arbitrary measurable map $\Omega \to S$ representing $f$.

\begin{lemma} \label{lem:caractfLbar}  Let $f \in L(\Omega,S)$. Then $f \in \overline{L}(\Omega,\mathcal{K})$
if and only if every nonempty finite subset of $f(\Omega)$
belongs to $\mathcal{K}$.
\end{lemma}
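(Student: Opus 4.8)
The plan is to prove the two implications separately, exploiting that membership in $L(\Omega,\mathcal{K})$ constrains only the atomic support $f(\Omega)$ of a random variable, and that $\mathcal{K}$ is closed under passing to nonempty subsets. Throughout I write $p_s = \la(f^{-1}(\{s\}))$, so that $f(\Omega) = \{ s \in S \mid p_s > 0\}$ is at most countable with $\sum_{s \in f(\Omega)} p_s \le 1$. Recall also that $f \in \overline{L}(\Omega,\mathcal{K})$ means exactly that for every $\eps>0$ there is $g \in L(\Omega,\mathcal{K})$ with $d(f,g)<\eps$.

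For the direct implication, suppose $f \in \overline{L}(\Omega,\mathcal{K})$ and fix a nonempty finite subset $\{s_1,\dots,s_n\} \subseteq f(\Omega)$. Put $\eta = \min_i p_{s_i} > 0$ and pick $g \in L(\Omega,\mathcal{K})$ with $d(f,g) < \eta$. Since $f^{-1}(\{s_i\}) \cap \{f=g\} \subseteq g^{-1}(\{s_i\})$, I obtain $\la(g^{-1}(\{s_i\})) \ge p_{s_i} - d(f,g) > 0$ for each $i$, so $\{s_1,\dots,s_n\} \subseteq g(\Omega)$. As $g \in L(\Omega,\mathcal{K})$, the set $g(\Omega)$ is a face of $\mathcal{K}$, and the downward-closedness of $\mathcal{K}$ yields $\{s_1,\dots,s_n\} \in \mathcal{K}$.

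For the converse, assume every nonempty finite subset of $f(\Omega)$ lies in $\mathcal{K}$ and fix $\eps > 0$; I must produce $g \in L(\Omega,\mathcal{K})$ with $d(f,g) < \eps$. Using that $f(\Omega)$ is at most countable and that $\sum_s p_s$ converges, choose a finite subset $F = \{s_1,\dots,s_N\} \subseteq f(\Omega)$ with $\sum_{s \in f(\Omega)\setminus F} p_s < \eps$; by hypothesis $F \in \mathcal{K}$. Let $T = \bigcup_{s \in f(\Omega)\setminus F} f^{-1}(\{s\})$, a set of measure $< \eps$, and define $g$ to agree with $f$ off $T$ and to equal $s_1$ on $T$. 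The crucial point is that off $f(\Omega)$ the map $f$ is nonatomic, so carrying it over verbatim creates no new atom; hence the atomic support of $g$ is exactly $F$, whence $g \in L(\Omega,\mathcal{K})$, while $d(f,g) \le \la(T) < \eps$. (If $f(\Omega) = \emptyset$ the hypothesis is vacuous, and choosing any vertex $v$ of $\mathcal{K}$ one redefines $f$ to equal $v$ on a set of measure $\eps$, which again lands in $L(\Omega,\mathcal{K})$.)

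The only genuine obstacle is the converse, and specifically the fact that $f$ may have a large nonatomic part or infinitely many atoms, which at first sight seems to obstruct approximation by the finitely-supported faces of $\mathcal{K}$. The resolution, and the step I expect to require the most care to phrase correctly, is the observation that membership in $L(\Omega,\mathcal{K})$ is a condition on the atomic support alone: the nonatomic part of $f$ is irrelevant to it and may be preserved untouched, so that only the small-measure tail of atoms needs to be reassigned, and it can be folded into a single retained vertex without enlarging the atomic support beyond $F$.
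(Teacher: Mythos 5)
Your proof is correct and follows essentially the same route as the paper: the forward implication via the minimal atom mass $m=\min_i p_{s_i}$, and the converse by truncating the (automatically countable) atom set to a finite face and folding the small-measure tail onto one retained vertex. The only divergence is your explicit care about a possible nonatomic part of $f$; this is harmless but moot, since Proposition 3.3 of \cite{CCS}, which the paper invokes here, guarantees that the law of any $f\in L(\Omega,S)$ is purely atomic, so $\la\left(f^{-1}(S\setminus f(\Omega))\right)=0$ and the tail estimate closes exactly as you wrote it.
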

\begin{proof}
Assume $f \in \bar{L}(\Omega,\mathcal{K})$ and let $F \subset f(\Omega)$ be a nonempty finite subset
as in the statement. We set $m = \min \{ \la(f^{-1}(\{ s \})) \ | \ s \in F \}$.
We have $m > 0$. Since $f \in \bar{L}(\Omega,\mathcal{K})$, there exists $f_0 \in L(\Omega,\mathcal{K})$ such that $d(f,f_0) < m$. 
We then have $F \subset f_0(\Omega)$.
Indeed, there would otherwise exist $s \in F \setminus f_0(\Omega)$, and then
$d(f,f_0) \geq \la(f^{-1}(\{ s \})) \geq m$, a contradiction. From this we get $F \in \mathcal{K}$.
Conversely, assume 
that 
every nonempty finite subset of $f(\Omega)$
belongs to $\mathcal{K}$.
From \cite{CCS} Proposition 3.3 we know that $ f(\Omega) \subset S$
is countable. If $f(\Omega)$ is finite we have $f(\Omega) \in \mathcal{K}$ by assumption and $f \in L(\Omega,\mathcal{K})$.
Otherwise, let us fix a bijection $\N \to f(\Omega)$, $n \mapsto x_n$
and define $f_n \in L(\Omega,S)$ by $f_n(t) = f(t)$ if $f(t) \in \{x_0,\dots,x_n \}$,
and $f_n(t) = x_0$ otherwise. Clearly $f_n(\Omega) \subset f(\Omega)$
is nonempty finite hence belongs to $\mathcal{K}$, and
$f_n \in L(\Omega,\mathcal{K})$.  
On the other hand,  $d(f_n,f) \leq \sum_{k > n } \la(f^{-1}(\{ x_k \})) \to 0$, hence
$f \in \bar{L}(\Omega,\mathcal{K})$ and this proves the claim.
\end{proof}

\label{sect:functorial} 

We prove that, as announced in the introduction, $\bar{L}(\Omega,\bullet)$
provides a functor $\mathbf{Simp} \to \mathbf{CMet}_1$ that can be decomposed as
$Comp \circ L(\Omega,\bullet)$, where $L(\Omega,\bullet)$ is itself a functor $\mathbf{Simp} \to \mathbf{Met}_1$.

Let $\varphi \in \Hom_{\mathbf{Simp}}(\mathcal{K}_1,\mathcal{K}_2)$ that is $\varphi : \bigcup \mathcal{K}_1 \to \bigcup \mathcal{K}_2$ such that $\varphi(F) \in \mathcal{K}_2$ for all $F \in \mathcal{K}_1$.  If $f \in L(\Omega,\KK_1)$, $g = L(\Omega,\varphi)(f) = \varphi \circ f$
is a measurable map and $g(\Omega) = \varphi(f(\Omega))$.
Since $f(\Omega) \in \mathcal{K}_1$
and $\varphi$ is simplicial we get that $\varphi(f(\Omega)) \in \KK_2$
hence $g \in L(\Omega,\KK_2)$. From this one gets immediately that
$L(\Omega,\bullet)$ indeed defines a functor $\mathbf{Simp} \to \mathbf{Met_1}$.

Similarly, if $f \in \bar{L}(\Omega,\KK_1)$ and $g = \varphi \circ f = L(\Omega,\varphi)(f) \in L(\Omega,S)$,
then again $g(\Omega) 
= \varphi(f(\Omega))$. But, for any finite set
$F \subset g(\Omega) = \varphi(f(\Omega))$ there exists $F' \subset f(\Omega)$ finite
and with the property that $F = \varphi(F')$. Now $f \in \LL(\Omega,\KK_1) \Rightarrow F' \in \KK_1$,
by Lemma \ref{lem:caractfLbar}, hence $F \in \KK_2$ because $\varphi$ is a simplicial morphism. By Lemma \ref{lem:caractfLbar} one
gets $g \in \LL(\Omega,\KK_2)$, hence $\LL(\Omega,\bullet)$
defines a functor $\mathbf{Simp} \to \mathbf{CMet_1}$. We checks immediately that $\LL(\Omega,\bullet) = Comp \circ L(\Omega,\bullet)$, and this proves Proposition \ref{prop:functorial}.

 \subsection{Technical preliminaries}

We denote by $2$ in the notation $L(\Omega,2)$ a set
with two elements. When needed, we will also assume that this set is pointed, that is contains a special point called $0$, so that $f \in L(\Omega,2)$ can be identified with $\{ t \in \Omega ; f(t) \neq 0 \}$, up
to a set of measure $0$. Note that these conventions agree with the set-theoretic definition of $2 = \{ 0,1 \} = \{ \emptyset,\{ \emptyset \} \}$. 

\begin{lemma} \label{lem:omegaF} Let $F$ be a set. The map $f \mapsto \{ t \in \Omega ; f(t) \not\in F \}$
is uniformly continuous $L(\Omega,S) \to L(\Omega,2)$, and even contracting.
\end{lemma}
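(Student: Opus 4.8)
The plan is to unwind the definitions and exhibit the map explicitly as a composition, then estimate the distance directly. Let me denote the target map by $\Phi : L(\Omega,S) \to L(\Omega,2)$, $\Phi(f) = \{t \in \Omega ; f(t) \notin F\}$. Using the pointed identification of $L(\Omega,2)$ with measurable subsets of $\Omega$ modulo null sets, the metric on $L(\Omega,2)$ becomes the symmetric-difference measure: $d(\Phi(f),\Phi(g)) = \la(\Phi(f) \,\triangle\, \Phi(g))$. So the whole claim reduces to the inequality
$$
\la\bigl( \{ t ; f(t) \notin F \} \,\triangle\, \{ t ; g(t) \notin F \} \bigr) \leq \la\bigl( \{ t ; f(t) \neq g(t) \} \bigr).
$$

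First I would check that $\Phi$ is well-defined on equivalence classes: if $f = g$ almost everywhere then $\{f \notin F\}$ and $\{g \notin F\}$ differ by a null set, so they represent the same element of $L(\Omega,2)$. Next, the core step is a pointwise set-inclusion. Consider a point $t$ lying in the symmetric difference on the left-hand side; then exactly one of $f(t) \notin F$, $g(t) \notin F$ holds, which forces $f(t) \neq g(t)$. Hence there is an inclusion of sets
$$
\{ t ; f(t) \notin F \} \,\triangle\, \{ t ; g(t) \notin F \} \ \subset\ \{ t ; f(t) \neq g(t) \},
$$
and applying the monotone measure $\la$ yields exactly the desired inequality. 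Since $d(f,g)$ on $L(\Omega,S)$ is by definition $\la(\{f \neq g\})$, this establishes $d(\Phi(f),\Phi(g)) \leq d(f,g)$, i.e.\ that $\Phi$ is contracting; uniform continuity is then immediate (one may take $\eta = \eps$).

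The main obstacle is not analytic difficulty but keeping the identifications honest: one must be careful that the symmetric-difference formula for the $L(\Omega,2)$ metric is genuinely the restriction of the ambient metric $d$ under the stated pointed convention, and that all the sets involved are measurable so that $\la$ applies. The measurability of $\{f \notin F\} = f^{-1}(S \setminus F)$ holds because $f$ is measurable and every subset of the discrete space $S$ (in particular $S \setminus F$) is Borel, so each set above is the preimage of a Borel set under a measurable map; the set $\{f \neq g\}$ is measurable as the complement of the diagonal pullback. Once measurability is in hand the pointwise inclusion is purely logical and the proof is complete.
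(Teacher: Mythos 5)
Your proof is correct and follows essentially the same route as the paper's: the pointwise implication $\Phi(f)(t)\neq\Phi(g)(t)\Rightarrow f(t)\neq g(t)$ (equivalently, your set inclusion for the symmetric difference) followed by integration/monotonicity of $\la$ is exactly the paper's argument. The added remarks on well-definedness modulo null sets and measurability are routine but harmless.
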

\begin{proof}
Let $f_1,f_2 \in L(\Omega, S)$, and $\Psi : L(\Omega,S) \to L(\Omega,2)$
the map defined by the statement. Then $\Psi(f_1)(t) \neq \Psi(f_2)(t) \Rightarrow f_1(t) \neq f_2(t)$,
hence $d(\Psi(f_1)(t),\Psi(f_2)(t)) \leq d(f_1(t),f_2(t))$ for all $t \in \Omega$
and finally $d(\Psi(f_1),\Psi(f_2)) \leq d(f_1,f_2)$, whence $\Psi$
is contracting and uniformly continuous.
\end{proof}

\begin{lemma} \label{lem:ineq4} Let $a,b,c,d \in \R$ with $a \leq b$ and $c \leq d$.
Then
$$
\la\left( [a,b] \setminus ]c,d[
\right)
 \leqslant |a-c| + |b-d|.$$

\end{lemma}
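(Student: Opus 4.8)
The plan is to estimate the Lebesgue measure of the set difference $[a,b] \setminus {]c,d[}$ directly by decomposing it into its left and right overhangs. Since we are removing an open interval from a closed interval, with all four endpoints real and satisfying $a \le b$ and $c \le d$, the set $[a,b] \setminus {]c,d[}$ consists of points in $[a,b]$ that lie at or below $c$, together with points in $[a,b]$ that lie at or above $d$. Concretely, I would write
$$
[a,b] \setminus {]c,d[} = \bigl([a,b] \cap {]-\infty,c]}\bigr) \cup \bigl([a,b] \cap [d,+\infty[\bigr),
$$
and bound the measure of the whole by the sum of the measures of the two pieces (subadditivity of $\la$ suffices, so I need not worry about whether the pieces overlap).

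Next I would estimate each piece. The left piece $[a,b] \cap {]-\infty,c]}$ is either empty or an interval whose length is at most $\max(c-a,0) \le |a-c|$: its right endpoint is $\min(b,c)$ and its left endpoint is $a$, so its length is $\min(b,c) - a$ when this is nonnegative, which is bounded above by $c - a \le |a - c|$. Symmetrically, the right piece $[a,b] \cap [d,+\infty[$ has length at most $\max(b-d,0) \le |b-d|$, since its endpoints are $d$ (or the left end) and $b$. Adding the two bounds gives
$$
\la\bigl([a,b] \setminus {]c,d[}\bigr) \le |a-c| + |b-d|,
$$
which is the desired inequality.

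The only genuine subtlety, and the step I would treat most carefully, is the degenerate casework: one must check that the bounds $\min(b,c)-a \le |a-c|$ and $b-\max(a,d) \le |b-d|$ hold even when a piece is empty (in which case its measure is $0$ and the bound is trivially satisfied), and when the intervals are themselves degenerate (e.g. $a=b$ or $c=d$). I expect these to be routine once the decomposition is fixed, since in every case the measure of a piece is the positive part of a difference of endpoints, and the positive part of a real number is dominated by its absolute value. Thus the main obstacle is merely bookkeeping over the relative positions of $a,b,c,d$ rather than any analytic difficulty; subadditivity of the measure is what lets me avoid an exhaustive enumeration of overlap cases.
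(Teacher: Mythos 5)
Your proof is correct, and it takes a genuinely different route from the paper's. The paper enumerates the six possible relative positions of $c \leq d$ with respect to $a \leq b$ (illustrated with a figure) and verifies the inequality separately in each case. You instead observe the identity $[a,b] \setminus\, ]c,d[\; = \bigl([a,b] \cap\, ]-\infty,c]\bigr) \cup \bigl([a,b] \cap [d,+\infty[\bigr)$, bound the left piece by $\max(\min(b,c)-a,0) \leq |a-c|$ and the right piece by $\max(b-\max(a,d),0) \leq |b-d|$, and conclude by subadditivity of $\la$. This buys you a uniform argument: the residual casework (empty versus nonempty pieces, degenerate intervals) is absorbed into the single observation that the positive part of a real number is dominated by its absolute value, so no exhaustive enumeration is needed. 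The paper's case-by-case treatment is more concrete and visually transparent, and in the cases where the set is empty or a single interval it gives the sharper bounds $0$, $|b-d|$, or $|a-c|$ individually, but your decomposition is shorter and arguably more robust. Both arguments are complete and yield the stated inequality.
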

\begin{proof}
There are six possible relative positions of $c \leq d$ with respect to $a \leq b$ to consider, which are depicted as follows.
\begin{center}
\begin{tikzpicture}
\draw[dashed] (-4,0) -- (10,0);
\draw[ultra thick,blue] (0,0) -- (4,0);
\draw[thick,blue] (0,0) node {$\bullet$};
\draw[thick,blue] (0,-0.5) node {$a$};
\draw[thick,blue] (4,0) node {$\bullet$};
\draw[thick,blue] (4,-0.5) node {$b$};
\draw[thick,red] (-3,0) node {$\bullet$};
\draw[thick,red] (-3,-0.5) node {$c$};
\draw[thick,red] (-2,0) node {$\bullet$};
\draw[thick,red] (-2,-0.5) node {$d$};
\draw[thick,red] (-1,0) node {$\bullet$};
\draw[thick,red] (-1,-0.5) node {$c$};
\draw[thick,red] (1,0) node {$\bullet$};
\draw[thick,red] (1,-0.5) node {$d$};
\draw[thick,red] (-0.5,0) node {$\bullet$};
\draw[thick,red] (-0.5,-0.5) node {$c$};
\draw[thick,red] (4.5,0) node {$\bullet$};
\draw[thick,red] (4.5,-0.5) node {$d$};
\draw[thick,red] (1.6,0) node {$\bullet$};
\draw[thick,red] (1.6,-0.5) node {$c$};
\draw[thick,red] (3,0) node {$\bullet$};
\draw[thick,red] (3,-0.5) node {$d$};
\draw[thick,red] (3.5,0) node {$\bullet$};
\draw[thick,red] (3.5,-0.5) node {$c$};
\draw[thick,red] (5.5,0) node {$\bullet$};
\draw[thick,red] (5.5,-0.5) node {$d$};
\draw[thick,red] (6.5,0) node {$\bullet$};
\draw[thick,red] (6.5,-0.5) node {$c$};
\draw[thick,red] (7.5,0) node {$\bullet$};
\draw[thick,red] (7.5,-0.5) node {$d$};

\begin{scope}
\clip (-4,0) -- (10,0) --  (10,4) -- (-4,4) -- cycle;
\draw[red] (-2.5,0) ellipse (.5 and .5);
\draw[red] (0,0) ellipse (1 and 1);
\draw[red] (2,0) ellipse (2.5 and 1);
\draw[red] (2.3,0) ellipse (.7 and .7);
\draw[red] (4.5,0) ellipse (1 and 1);
\draw[red] (7,0) ellipse (.5 and .5);
%\draw (0.5,0) ellipse (1.5 and 0.7);
%\draw (0.5,0) ellipse (2.5 and 0.9);
%\draw (0.5,0) ellipse (3.5 and 1.1);
\end{scope}
\end{tikzpicture}
\end{center}
In three of them, namely $a \leq b \leq c\leq d$,
$c \leq d \leq a\leq b$, and $c \leq a \leq b\leq d$,
we have $\la\left( [a,b] \setminus ]c,d[
\right) = 0$. In case $c \leq a \leq d \leq b$, we have
$\la\left( [a,b] \setminus ]c,d[
\right) = \la([d,b]) = |b-d| \leq |a-c| + |b-d|$.
In case $a \leq c \leq b \leq d$, we have
$\la\left( [a,b] \setminus ]c,d[
\right) = \la([a,c]) = |a-c| \leq |a-c| + |b-d|$.
Finally, when $a \leq c \leq d \leq b$, we have
$\la\left( [a,b] \setminus ]c,d[
\right) = \la([a,c] \sqcup [d,b]) =  |a-c| + |b-d|$,
and this proves the claim.
\end{proof}

\begin{lemma} \label{lem:2rLipalphatof} Let $\Delta^r = \{ \underline{\alpha} = (\alpha_1,\dots,\alpha_r) \in \R_+^r \ | \ \alpha_1+\dots+\alpha_r = 1 \}$ denote the $r$-dimensional simplex. The map $\Delta^r \to L(\Omega,\{ 1,\dots,r \})$ defined by $\underline{\alpha} \mapsto f_{\underline{\alpha}}$
where $f_{\underline{\alpha}}(t) = i$ iff $t \in [\alpha_1+\dots+\alpha_{i-1}, \alpha_1+\dots+\alpha_{i}[$ is continuous. More precisely it is $2r$-Lipschitz if $\Delta^r$ is equipped with the metric
 $d(\underline{\alpha},\underline{\alpha'}) = \sum_i |\alpha_i - \alpha'_i|$.
\end{lemma}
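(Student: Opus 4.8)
The plan is to identify $\Omega$ with the unit interval $[0,1]$ equipped with Lebesgue measure (legitimate since $\Omega$ is a nonatomic standard probability space) and to estimate the distance $d(f_{\underline{\alpha}},f_{\underline{\alpha'}})$ directly from the defining metric on $L(\Omega,\{1,\dots,r\})$, that is, the measure of the set where the two maps disagree. First I would introduce the partial sums $s_0 = 0$, $s_i = \alpha_1+\dots+\alpha_i$ and $s'_0 = 0$, $s'_i = \alpha'_1+\dots+\alpha'_i$, so that $f_{\underline{\alpha}}(t)=i$ precisely on $[s_{i-1},s_i[$ and $f_{\underline{\alpha'}}(t)=i$ precisely on $[s'_{i-1},s'_i[$, with $s_r = s'_r = 1$. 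Since each $t$ lies in exactly one of the intervals $[s_{i-1},s_i[$, a point $t$ belongs to the disagreement set if and only if, for the unique $i$ with $f_{\underline{\alpha}}(t)=i$, one has $t\notin[s'_{i-1},s'_i[$; hence the disagreement set decomposes as a disjoint union
$$
\{ t ; f_{\underline{\alpha}}(t) \neq f_{\underline{\alpha'}}(t) \} = \bigsqcup_{i=1}^r \left( [s_{i-1},s_i[ \,\setminus\, [s'_{i-1},s'_i[ \right),
$$
so that $d(f_{\underline{\alpha}},f_{\underline{\alpha'}}) = \sum_{i=1}^r \la\big([s_{i-1},s_i[ \setminus [s'_{i-1},s'_i[\big)$.

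Next I would bound each summand with Lemma \ref{lem:ineq4}. Half-open intervals differ from the corresponding closed (resp. open) intervals only by finitely many points, which are Lebesgue-null, so replacing $[s_{i-1},s_i[$ by $[s_{i-1},s_i]$ and $[s'_{i-1},s'_i[$ by $]s'_{i-1},s'_i[$ leaves the measure unchanged. The hypotheses $s_{i-1}\le s_i$ and $s'_{i-1}\le s'_i$ hold because the coordinates are nonnegative, so Lemma \ref{lem:ineq4} applies and gives
$$
\la\big([s_{i-1},s_i[ \setminus [s'_{i-1},s'_i[\big) \le |s_{i-1}-s'_{i-1}| + |s_i-s'_i|.
$$

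Finally I would control the cumulative differences by the $\ell^1$ distance: $|s_i-s'_i| = \big|\sum_{k\le i}(\alpha_k-\alpha'_k)\big| \le \sum_{k=1}^r |\alpha_k-\alpha'_k| = d(\underline{\alpha},\underline{\alpha'})$. Summing the previous inequality over $i$ and using $|s_0-s'_0| = |s_r-s'_r| = 0$, the endpoint terms telescope away and only the interior differences survive, leaving $2\sum_{i=1}^{r-1}|s_i-s'_i| \le 2(r-1)\,d(\underline{\alpha},\underline{\alpha'})$, which is a fortiori at most $2r\,d(\underline{\alpha},\underline{\alpha'})$. This establishes both the Lipschitz bound and, in particular, continuity.

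The computation is elementary, and there is no deep obstacle; the only points demanding care are the disjointness of the decomposition of the disagreement set and the passage from the half-open intervals defining $f_{\underline{\alpha}}$ to the closed/open intervals required by Lemma \ref{lem:ineq4}. Both are handled by observing that any discrepancy is supported on finitely many points, which carry no Lebesgue measure.
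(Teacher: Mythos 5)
Your proof is correct and follows essentially the same route as the paper: identify $\Omega$ with $[0,1]$, decompose the disagreement set according to the intervals $[s_{i-1},s_i[$, bound each piece via Lemma \ref{lem:ineq4}, and control the partial-sum differences by the $\ell^1$ distance. Your final bookkeeping (using $s_0=s'_0$ and $s_r=s'_r$ to telescope the endpoint terms) even yields the marginally sharper constant $2(r-1)$, but this is a cosmetic improvement, not a different argument.
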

\begin{proof}
We fix an identification $\Omega \simeq [0,1]$.
Let $\underline{\alpha},\underline{\alpha'} \in \Delta^r$.
We denote $\beta_i = \alpha_1+\dots+\alpha_{i}$, $\beta_0 = 0$,
and we similarly define the $\beta'_i$. We have $\beta_{i} - \beta_{i-1} = \alpha_i$
hence $|\beta'_i - \beta_i | \leq \sum_{k \leq i} |\alpha'_k - \alpha_k|$
and finally $\sum_i |\beta'_i - \beta_i | \leq r \sum_i |\alpha'_i - \alpha_i|$.
Now, for $t \in [\beta_i,\beta_{i+1}[$
we have $f_{\underline{\alpha}}(t) = f_{\underline{\alpha}'}(t)$
unless $t \not\in [\beta'_i,\beta'_{i+1}[$.
From this and Lemma \ref{lem:ineq4} we get
$$
d(f_{\underline{\alpha}},f_{\underline{\alpha'}})
\leqslant \sum_{i=1}^r \la\left([\beta_i,\beta_{i+1}[ \setminus [\beta'_i,\beta'_{i+1}[\right)
\leqslant \sum_{i=1}^r |\beta_i - \beta'_i| + |\beta_{i+1} - \beta'_{i+1}|
\leqslant 2 \sum_{i=1}^r |\beta_i- \beta'_i| \leqslant 2r \sum_{i=1}^r |\alpha_i -\alpha'_i|
$$
and this proves the claim.
\end{proof}

\begin{lemma} \label{lem:subhomotopy} Let $\mathcal{K}$ be a simplicial complex and $X$ a
topological space, and $A \subset X$. If $\gamma_0,\gamma_1 : X \to \bar{L}(\Omega,\mathcal{K})$
are two continuous maps such that $\forall x \in X \ \ \gamma_0(x)(\Omega)
\subset \gamma_1(x) (\Omega)$, and $(\gamma_0)_{|A} = (\gamma_1)_{|A}$, then $\gamma_0$ and $\gamma_1$ are homotopic
relative to $A$. Moreover, if $\gamma_0$
and $\gamma_1$ take value inside $L(\Omega,\mathcal{K})$, then the homotopy takes
values inside $L(\Omega,\mathcal{K})$.
\end{lemma}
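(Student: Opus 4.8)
The plan is to build the homotopy by an explicit ``splicing'' construction, fixing once and for all an identification $\Omega \simeq [0,1]$ with Lebesgue measure (legitimate, as recalled in the introduction). For $(x,s) \in X \times [0,1]$ I would define $H(x,s) \in L(\Omega,S)$ by
\[
H(x,s)(t) = \begin{cases} \gamma_1(x)(t) & \text{if } t < s, \\ \gamma_0(x)(t) & \text{if } t \geq s. \end{cases}
\]
At the two ends this gives $H(x,0) = \gamma_0(x)$ and $H(x,1) = \gamma_1(x)$ as classes in $L(\Omega,S)$, so $H$ will be a homotopy from $\gamma_0$ to $\gamma_1$ provided it is continuous and lands in $\bar{L}(\Omega,\mathcal{K})$.

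The conceptual heart of the argument --- and the one place where the hypothesis $\gamma_0(x)(\Omega) \subset \gamma_1(x)(\Omega)$ is used --- is the verification that $H(x,s)$ stays in $\bar{L}(\Omega,\mathcal{K})$. I would first observe that any value taken by $H(x,s)$ on a set of positive measure is taken on a set of positive measure by $\gamma_0(x)$ or by $\gamma_1(x)$, so that $H(x,s)(\Omega) \subseteq \gamma_0(x)(\Omega) \cup \gamma_1(x)(\Omega)$. The inclusion hypothesis collapses this union to $\gamma_1(x)(\Omega)$, giving $H(x,s)(\Omega) \subseteq \gamma_1(x)(\Omega)$. Since $\gamma_1(x) \in \bar{L}(\Omega,\mathcal{K})$, Lemma \ref{lem:caractfLbar} tells us that every nonempty finite subset of $\gamma_1(x)(\Omega)$ lies in $\mathcal{K}$; a fortiori the same holds for subsets of $H(x,s)(\Omega)$, so Lemma \ref{lem:caractfLbar} gives $H(x,s) \in \bar{L}(\Omega,\mathcal{K})$. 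Without the inclusion hypothesis the union $\gamma_0(x)(\Omega) \cup \gamma_1(x)(\Omega)$ need not span a face, and the construction would fail; this is exactly the step where the hypothesis is essential, and the main (if not very deep) obstacle.

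For continuity I would bound, for $(x,s),(x',s') \in X \times [0,1]$,
\[
d\bigl(H(x,s),H(x',s')\bigr) \leq |s-s'| + d\bigl(\gamma_0(x),\gamma_0(x')\bigr) + d\bigl(\gamma_1(x),\gamma_1(x')\bigr),
\]
the first term coming from the fact that $H(x,s)$ and $H(x,s')$ differ only on the interval between $s$ and $s'$, the other two from comparing the two spliced functions piecewise on $[0,s')$ and on $[s',1]$. Continuity of $\gamma_0,\gamma_1$ then yields joint continuity of $H$. For the relative statement, if $x \in A$ then $\gamma_0(x) = \gamma_1(x)$ in $L(\Omega,S)$, so both branches of $H(x,s)$ agree almost everywhere and $H(x,s) = \gamma_0(x)$ for every $s$; hence the homotopy is stationary on $A$. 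Finally, for the ``moreover'' clause, if $\gamma_0,\gamma_1$ take values in $L(\Omega,\mathcal{K})$ then $\gamma_1(x)(\Omega) \in \mathcal{K}$ is an actual face, and $H(x,s)(\Omega)$ is a nonempty subset of it, hence a face by downward closure of $\mathcal{K}$; thus $H(x,s) \in L(\Omega,\mathcal{K})$, and the homotopy remains inside $L(\Omega,\mathcal{K})$ throughout. Everything except the stability inside $\bar{L}(\Omega,\mathcal{K})$ is a routine estimate with the metric $d$.
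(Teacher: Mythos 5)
Your proposal is correct and follows essentially the same route as the paper: the same splicing homotopy $H(x,s)$ cutting $\Omega \simeq [0,1]$ at $s$, the same use of Lemma \ref{lem:caractfLbar} together with the inclusion $H(x,s)(\Omega) \subset \gamma_1(x)(\Omega)$ to stay in $\bar{L}(\Omega,\mathcal{K})$, and the same continuity bound $d(H(x,s),H(x',s')) \leq |s-s'| + d(\gamma_0(x),\gamma_0(x')) + d(\gamma_1(x),\gamma_1(x'))$. Nothing further is needed.
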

\begin{proof}
We fix an identification $\Omega \simeq [0,1]$.
We define $H : [0,1] \times X \to L(\Omega,S)$ by
$H(u,x)(t) = \gamma_0(x)(t)$ if $t \geq u$
and $H(u,x)(t) = \gamma_1(x)(t)$ if $t < u$. We have $H(0,\bullet) = \gamma_0$
and $H(1,\bullet) = \gamma_1$.

We first check that $H$ is indeed a (set-theoretic) map $[0,1]\times X \to \bar{L}(\Omega,\mathcal{K})$.
For all $u \in [0,1]$
and $x \in X$ we have $H(u,x)(\Omega) \subset \gamma_0(x)(\Omega) \cup 
\gamma_1(x)(\Omega) = \gamma_1(x)(\Omega)$. Therefore
$H(u,x)(\Omega) \in \mathcal{K}$ if $\gamma_1(x) \in L(\Omega,\mathcal{K})$,
and all nonempty finite subsets of $H(u,x)(\Omega)\subset \gamma_1(x)(\Omega)$ belong 
to $\mathcal{K}$
if $\gamma_1(x) \in \bar{L}(\Omega,\mathcal{K})$. From this, by Lemma \ref{lem:caractfLbar} we get that
$H$ takes values inside $\bar{L}(\Omega,\mathcal{K})$, and even inside $L(\Omega,
\mathcal{K})$ if $\gamma_1 : X \to L(\Omega,\mathcal{K})$.

Now, we check that $H$ is continuous over $[0,1]\times X$. We have
$d(H(u,x),H(v,x)) \leq |u-v|$ for all $u,v \in [0,1]$ and, for all $x,y \in X$ and $u \in [0,1]$, we
have
$$
d(H(u,x),H(u,y))=\int_0^u d(\gamma_1(x)(t),\gamma_1(y)(t)) \dd t + \int_u^1 d(\gamma_0(x)(t),\gamma_0(y)(t)) \dd t
$$
{}
$$
\leqslant
\int_0^1 d(\gamma_1(x)(t),\gamma_1(y)(t)) \dd t + \int_0^1 d(\gamma_0(x)(t),\gamma_0(y)(t)) \dd t
= d(\gamma_1(x),\gamma_1(y)) + d(\gamma_0(x),\gamma_0(y))$$
from which we get $d(H(u,x),H(v,y))\leq |u-v| +  
d(\gamma_1(x),\gamma_1(y)) + d(\gamma_0(x),\gamma_0(y))$ for all $x,y \in X$ and $u,v\in [0,1]$. For any given $(u,x) \in [0,1]\times X$
this proves that $H$ is continuous at $(u,x)$. Indeed, given $\eps>0$, from the continuity of $\gamma_0,\gamma_1$ we get that, for some open neighborhood $V$ of $x$ we have $d(\gamma_0(x),
\gamma_0(y)) \leq \eps/3$ and
$d(\gamma_1(x),
\gamma_1(y)) \leq \eps/3$ for all $y \in V$. This
proves that $d(H(u,x),H(v,y))\leq \eps$ for all
$(v,y) \in ]u-\eps/3,u+\eps/3[\times V$
and this proves the continuity of $H$.

Finally, it is clear that
$\gamma_0(x) = \gamma_1(x)$ implies
$H(u,x) = \gamma_0(x) = \gamma_1(x)$ for all $u \in [0,1]$, therefore the homotopy
indeed fixes $A$.
\end{proof}

\subsection{Weak homotopy equivalence}

We now prove part (1) of the main theorem, through a series of propositions, which might be of independent interest.
\begin{proposition} \label{prop:projcompact} Let $C$ be a compact subspace of $\bar{L}(\Omega,\mathcal{K})$
and $C_0 \subset C \cap L(\Omega,\mathcal{K})$ a 
(possibly empty) subset such that $\bigcup_{c \in C_0} c(\Omega)$
is finite. Then there exists a continuous map
$p : C \to L(\Omega,\mathcal{K})$
such that $p(c) = c$ for all $c \in C_0$.
Moreover, $p(c)(\Omega) \subset c(\Omega)$ for all $c \in C$
and $\bigcup_{c \in C} p(c)(\Omega)$ is finite.
\end{proposition}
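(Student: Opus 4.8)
The plan is to construct $p$ by a single \emph{truncate-and-refill} operation: keep each $c$ unchanged on the large part of $\Omega$ where it already takes values in a well-chosen finite set $F$, and redistribute the remaining small mass onto the values of $c$ that lie in $F$, using a concrete position rule on $[0,1]$ so that no new value is introduced. The finite set $F$ comes from compactness, which also furnishes a uniform lower bound on the mass each $c$ places in $F$; this lower bound is what makes the refilling well defined.

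\emph{Step 1 (choice of $F$).} Since $C$ is compact it is totally bounded; fix $\eps < 1/2$ and a finite $\eps$-net $c_1,\dots,c_N \in C$. For each center pick a finite $F_i \subset c_i(\Omega)$ with $\la(c_i^{-1}(F_i)) > 1-\eps$ (possible since $\sum_{s \in c_i(\Omega)} \la(c_i^{-1}(s)) = 1$), and set $F = \big(\bigcup_{c\in C_0} c(\Omega)\big) \cup \bigcup_i F_i$, a finite set by the hypothesis on $C_0$. For any $c \in C$, choosing $c_i$ with $d(c,c_i) < \eps$ and using that $s \mapsto \la(c^{-1}(s))$ is $1$-Lipschitz in $c$ (indeed $|\la(c^{-1}(s)) - \la(c'^{-1}(s))| \le d(c,c')$), one gets $\rho_c := \la(c^{-1}(F)) \ge \la(c_i^{-1}(F_i)) - d(c,c_i) > 1 - 2\eps > 0$. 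Thus every $c$ places mass at least $1-2\eps$ inside $F$.

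\emph{Step 2 (definition of $p$).} Identify $\Omega \simeq [0,1]$ and fix a linear order $F = \{s_1,\dots,s_m\}$. Write $\alpha_i(c) = \la(c^{-1}(s_i))$, $\alpha_0(c) = 1-\rho_c$, and let $B_c = c^{-1}(S\setminus F)$ be the bad set. On $\Omega \setminus B_c$ put $p(c) = c$. On $B_c$, refill proportionally to the conditional law of $c$ given $F$: let $u_c(t) = \la(B_c \cap [0,t])$ be the cumulative bad mass, set $\tau_i(c) = \alpha_0(c)\sum_{j\le i} \alpha_j(c)/\rho_c$, and declare $p(c)(t) = s_i$ for $t \in B_c$ with $u_c(t) \in [\tau_{i-1}(c),\tau_i(c))$. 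Then $p(c)(\Omega) = \{\,s_i : \alpha_i(c) > 0\,\} = c(\Omega)\cap F$, a nonempty finite subset of $c(\Omega)$, hence in $\mathcal{K}$ by lemma \ref{lem:caractfLbar}; so $p(c) \in L(\Omega,\mathcal{K})$, $p(c)(\Omega) \subset c(\Omega)$, and $\bigcup_{c\in C} p(c)(\Omega) \subset F$ is finite. For $c \in C_0$ we have $c(\Omega)\subset F$, so $B_c$ is null and $p(c) = c$.

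\emph{Step 3 (continuity, the crux).} Given $c,c'$ with $d(c,c') = \delta$ and $D = \{c\ne c'\}$, one has $B_c \triangle B_{c'} \subset D$, so $p(c)$ and $p(c')$ can disagree only on $D$ or on bad points where the refilling differs. On $B_c\cap B_{c'}\setminus D$ a disagreement forces $u_c(t)$ to lie within $|u_c(t)-u_{c'}(t)| + \max_i|\tau_i(c)-\tau_i(c')|$ of some threshold $\tau_i(c)$. Here $|u_c(t)-u_{c'}(t)| \le \la(B_c\triangle B_{c'}) \le \delta$, while each $\tau_i$ is Lipschitz in $c$ because the $\alpha_j$ are $1$-Lipschitz and $\rho_c \ge 1-2\eps$ is bounded away from $0$; say $|\tau_i(c)-\tau_i(c')| \le L\delta$. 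Since $u_c$ pushes $\la|_{B_c}$ onto $\la|_{[0,\alpha_0(c)]}$, the preimage of any interval of length $r$ has measure $\le r$, so the total bad disagreement is at most $2m(L+1)\delta$. Hence $d(p(c),p(c')) \le \delta + 2m(L+1)\delta \to 0$, proving continuity. I expect this estimate — in particular localizing the disagreement near the finitely many thresholds $\tau_i$ and exploiting both the uniform lower bound $\rho_c \ge 1-2\eps$ and the Lipschitz dependence of the $\tau_i$ — to be the main technical point; everything else is bookkeeping. Note that lemma \ref{lem:omegaF} gives the continuity of $c\mapsto \un_{B_c}$ and hence of $\alpha_0$, and the interval-stacking idea of lemma \ref{lem:2rLipalphatof} is precisely the refilling mechanism applied on $B_c$.
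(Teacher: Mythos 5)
Your proof is correct, and it shares the paper's overall skeleton: compactness yields a finite set $F$; $p(c)$ agrees with $c$ on $c^{-1}(F)$; and the residual mass is redistributed by an interval-stacking device onto points of $F\cap c(\Omega)$ with continuously varying weights. Where you genuinely diverge is in how the two delicate points are handled. The paper covers $C$ by the open sets $O_{s,n}=\{f \mid \la(f^{-1}(\{s\}))>1/n\}$, extracts a finite subcover indexed by $s_1,\dots,s_r$, and uses the partition of unity $\alpha_i(c)=d(c,{}^cO_i)/\sum_j d(c,{}^cO_j)$: continuity of the weights is then automatic, and $\alpha_i(c)>0$ forces $c\in O_i$, hence $s_i\in c(\Omega)$. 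You instead take an $\eps$-net to secure the uniform lower bound $\rho_c>1-2\eps$ on the mass of $c$ inside $F$, and use the normalized conditional masses $\alpha_i(c)/\rho_c$ as weights; for these, continuity is \emph{not} free and rests precisely on that lower bound, which is the one place your $\eps$-net earns its keep (the paper never needs such a bound). For the stacking itself, the paper intersects $\Omega_c$ with the raw intervals $[\beta_{i-1}(c),\beta_i(c))$ and quotes lemmas \ref{lem:omegaF} and \ref{lem:2rLipalphatof}, whereas you reparametrize by the cumulative bad mass $u_c$ and estimate directly via the fact that $u_c$ pushes $\la|_{B_c}$ onto Lebesgue measure; your variant has the small bonus that $p(c)$ realizes exactly the conditional law of $c$ given $F$ on the bad set, and it yields an explicit Lipschitz constant for $p$. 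Two cosmetic repairs: the inequality $\la(c^{-1}(F_i))\ge\la(c_i^{-1}(F_i))-d(c,c_i)$ should be justified at the level of sets, via $\la\bigl(c^{-1}(F_i)\,\Delta\,c_i^{-1}(F_i)\bigr)\le\la(\{c\ne c_i\})$, rather than by summing the pointwise Lipschitz bound over $s\in F_i$ (which would cost a factor $|F_i|$); and the existence of $F_i$ uses that the essential image of $c_i$ is countable and carries full mass, the same fact from \cite{CCS} that the paper invokes in the proof of lemma \ref{lem:caractfLbar}.
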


\begin{proof}
For any $s \in S$ and $n \in \N^* = \N \setminus \{ 0 \}$
we denote
$O_{s,n} = \{ f \in L(\Omega, S) \ | \ \la(f^{-1}(\{ s \})) > 1/n \}$. It is an open subset of $L(\Omega,S)$, hence $C_{s,n} = C \cap O_{s,n}$ is an open
subset of $C$. Now, for every $c \in C$
there exists $s \in S$ such that $\la(c^{-1}(\{ s \})) > 0$ hence $c \in C_{s,n}$ for some $n$. Then $C$ is compact and covered by the $C_{s,n}$ hence
there exists $s_1,\dots,s_r \in S$ and $n_1,\dots,n_r \in \N^*$ such that 
$C \subset \bigcup_{i=1}^r O_{s_i,n_i}$.
Up to replacing the $n_i$'s by their maximum, we may suppose $n_1 = \dots = n_r = n_0$.
Let then $F' = \bigcup_{c \in C_0} c(\Omega) \subset S$.
We set $F = \{ s_1,\dots, s_r \}\cup
 F'$. For any $i \in \{1,\dots, r \}$
we set $O_i = O_{s_i,n_0}$.

For any $c \in C$, we set $\Omega_c = \{ t \in \Omega ; c(t) \not\in F \}$, and
$$
\alpha_i(c) = \frac{d(c, \, ^c O_i)}{\sum_j d(c, ^c O_j)}
$$
and $\beta_i(c) = \sum_{k \leq i} \alpha_k(c)$, where $^c X$ denotes the complement of $X$. These define continuous maps $C \to \R_+$.
We fix an identification $\Omega \simeq [0,1]$, so
that intervals make sense inside $\Omega$.
We then set
$$
\begin{array}{lcll}
p(c)(t) &=& c(t) & \mbox{ if } c(t) \in F, \mbox{ i.e. } t \not\in \Omega_c \\
&=& s_i  & \mbox{ if } t \in \Omega_c \cap [\beta_{i-1}(c),\beta_i(c)[
\end{array}
$$

\bigskip

Let $c_1,c_2 \in C$ and $\underline{\alpha}^s$, $s = 1,2$ the corresponding
$r$-tuples $\underline{\alpha}^s = (\alpha^s_1,\dots,\alpha^s_r) \in \Delta^r$ given by $\alpha^s_i = \alpha_i(c_s)$.
When $t \not\in \Omega_{c_1} \cup \Omega_{c_2}$ 
we have $p(c_s)(t) = c_s(t)$, hence
$$
\int_{\Omega\setminus (\Omega_{c_1} \cup \Omega_{c_2})} d(p(c_1)(t), p(c_2)(t)) \dd t
\leqslant \int_{\Omega} d(c_1(t), c_2(t)) \dd t = d(c_1,c_2)
$$
and we have
$$
%d(p(c_1),p(c_2)) = 
\int_{\Omega_{c_1} \cup \Omega_{c_2}} d(p(c_1)(t), p(c_2)(t)) \dd t \leqslant \la(\Omega_{c_1} \Delta \Omega_{c_2}) +
\int_{\Omega_{c_1} \cap \Omega_{c_2}} d(p(c_1)(t), p(c_2)(t)) \dd t.
$$
Since we know that $\la(\Omega_{c_1} \Delta \Omega_{c_2}) \leq d(c_1,c_2)$ by
Lemma \ref{lem:omegaF}, we get
$$
d(p(c_1),p(c_2)) \leq 2 d(c_1,c_2) + \int_{\Omega_{c_1} \cap \Omega_{c_2}} d(p(c_1)(t), p(c_2)(t)) \dd t
$$
and
there only remains to check that the term $\int_{\Omega_{c_1} \cap \Omega_{c_2}} d(p(c_1)(t), p(c_2)(t)) \dd t$ is
continuous. But, by Lemma \ref{lem:2rLipalphatof},
we have
$$
\int_{\Omega_{c_1} \cap \Omega_{c_2}} d(p(c_1)(t), p(c_2)(t)) \dd t
= \int_{\Omega_{c_1} \cap \Omega_{c_2}} d(f_{\underline{\alpha^1}}(t),f_{\underline{\alpha^2}}(t)) \dd t
\leq d(f_{\underline{\alpha^1}},f_{\underline{\alpha^2}}) \leq 2 r |\underline{\alpha}^1
- \underline{\alpha}^2|
$$
whence the conclusion, by continuity of $c \mapsto \underline{\alpha}$.

We must now check that $p$ takes values inside $L(\Omega,\mathcal{K})$.
Let $c \in C$. We know that $p(c)(\Omega) \subset F$ is finite, and
$$
p(c)(\Omega) \subset c(\Omega) \cup \{ s_i ; c \in O_i \}.
$$
But $c \in O_i$ implies that $s_i \in c(\Omega)$
hence  $p(c)(\Omega)$ is nonempty finite subset of $c(\Omega)$. Since $c \in \bar{L}(\Omega,\mathcal{K})$, by Lemma \ref{lem:caractfLbar} this
proves $p(c)(\Omega)\in \mathcal{K}$ and $p(c)
\in L(\Omega,\mathcal{K})$.

Finally, we have $p(c) = c$ for all $c \in C_0$ since $F \supset F'$.

\end{proof}

We immediately get the following corollary, by letting $C_0 = \{ c^0_1,\dots,c^0_k \}$.

\begin{corollary} \label{cor:projcompact} Let $C$ be a compact subset of  $\bar{L}(\Omega,\mathcal{K})$
and $c^0_1,\dots,c^0_k \in C \cap L(\Omega,\mathcal{K})$.
Then there exists a continuous map $p : C \to L(\Omega,\mathcal{K})$
such that $p(c^0_i) = c^0_i$ for all $i \in \{1,\dots, k\}$.
Moreover, $p(c)(\Omega) \subset c(\Omega)$ for all $c \in C$
and $\bigcup_{c \in C} p(c)(\Omega)$ is finite.
\end{corollary}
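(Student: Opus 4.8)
The plan is to deduce this directly from Proposition \ref{prop:projcompact} by specializing its finite subset $C_0$ to the finite collection $C_0 = \{ c^0_1,\dots,c^0_k \}$. The only hypothesis of the proposition that is not already part of the corollary's assumptions is the finiteness of $\bigcup_{c \in C_0} c(\Omega)$, so the entire task reduces to verifying that condition.

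First I would note that $C_0 \subset C \cap L(\Omega,\mathcal{K})$ holds by hypothesis, and that for each index $i$ the assumption $c^0_i \in L(\Omega,\mathcal{K})$ gives $c^0_i(\Omega) \in \mathcal{K}$. Since the faces of a simplicial complex are by definition nonempty \emph{finite} subsets of $S$ (elements of $\PF^*(S)$), each essential image $c^0_i(\Omega)$ is finite. Consequently $\bigcup_{c \in C_0} c(\Omega) = \bigcup_{i=1}^{k} c^0_i(\Omega)$ is a finite union of finite sets, hence finite, which is exactly the finiteness hypothesis required by Proposition \ref{prop:projcompact}.

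Applying the proposition to this $C_0$ then produces a continuous map $p : C \to L(\Omega,\mathcal{K})$ with $p(c) = c$ for every $c \in C_0$; specializing to $c = c^0_i$ yields $p(c^0_i) = c^0_i$ for all $i \in \{1,\dots,k\}$, which is the first assertion. The two remaining clauses, namely $p(c)(\Omega) \subset c(\Omega)$ for all $c \in C$ and the finiteness of $\bigcup_{c \in C} p(c)(\Omega)$, are stated verbatim in the conclusion of the proposition and therefore carry over with no additional argument. I anticipate no genuine obstacle: all the analytic content (the compactness covering argument, the Lipschitz estimate via Lemma \ref{lem:2rLipalphatof}, and the verification that $p$ lands in $L(\Omega,\mathcal{K})$) resides in the proposition, and the corollary is simply its reading in the case where $C_0$ is a finite list of prescribed points.
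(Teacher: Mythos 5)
Your proposal is correct and matches the paper's argument exactly: the paper derives the corollary from Proposition \ref{prop:projcompact} by setting $C_0 = \{c^0_1,\dots,c^0_k\}$, and your verification that $\bigcup_{c \in C_0} c(\Omega)$ is finite (each $c^0_i(\Omega)$ being a face of $\mathcal{K}$, hence finite) is precisely the one hypothesis that needs checking.
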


\bigskip

\begin{proposition} \label{prop:homotopimfinie2} Let $C$ be a compact space, and $x_0 \in C$. 
For any simplicial complex $\KK$, and any continuous map
 $\gamma : C \to L(\Omega,\KK)$, there exists a continuous map $\hat{\gamma} : (C,x_0)
\to (L(\Omega,\KK),\gamma(x_0))$ which is homotopic to $\gamma$ relative to $( \{ x_0 \}, \{\gamma(x_0) \})$,
and such that
$\bigcup_{x \in C} \hat{\gamma}(x)(\Omega)$ is finite.
\end{proposition}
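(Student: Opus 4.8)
The plan is to reduce the statement to the two technical tools already at hand: the projection of Proposition~\ref{prop:projcompact}, in the convenient form of Corollary~\ref{cor:projcompact}, and the homotopy construction of Lemma~\ref{lem:subhomotopy}. The point is that Proposition~\ref{prop:projcompact} already performs all of the analytic work (continuity of the projection, finiteness of the total essential image), so what remains is to feed it the right compact set and then stitch the result to $\gamma$ by a homotopy.

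First I would set $C' = \gamma(C)$. Since $C$ is compact and $\gamma$ is continuous, $C'$ is a compact subset of $L(\Omega,\KK) \subset \bar{L}(\Omega,\KK)$, and the basepoint value $\gamma(x_0)$ lies in $C' \cap L(\Omega,\KK)$. Applying Corollary~\ref{cor:projcompact} to the compact set $C'$ with the single marked point $c^0_1 = \gamma(x_0)$, I obtain a continuous map $p : C' \to L(\Omega,\KK)$ with $p(\gamma(x_0)) = \gamma(x_0)$, with $p(c)(\Omega) \subset c(\Omega)$ for all $c \in C'$, and with $\bigcup_{c \in C'} p(c)(\Omega)$ finite. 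I then define $\hat{\gamma} = p \circ \gamma : C \to L(\Omega,\KK)$. This composite is continuous; it satisfies $\hat{\gamma}(x_0) = p(\gamma(x_0)) = \gamma(x_0)$, so it is indeed a pointed map $(C,x_0) \to (L(\Omega,\KK),\gamma(x_0))$; and its total essential image is $\bigcup_{x \in C}\hat{\gamma}(x)(\Omega) = \bigcup_{c \in C'} p(c)(\Omega)$, which is finite, as required.

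It then remains only to produce the homotopy, and this is exactly what Lemma~\ref{lem:subhomotopy} supplies. I would apply it with $X = C$, $A = \{ x_0 \}$, $\gamma_0 = \hat{\gamma}$ and $\gamma_1 = \gamma$. Its containment hypothesis $\gamma_0(x)(\Omega) \subset \gamma_1(x)(\Omega)$ holds for every $x \in C$ because $\hat{\gamma}(x)(\Omega) = p(\gamma(x))(\Omega) \subset \gamma(x)(\Omega)$, once again by the containment property of $p$ applied to $c = \gamma(x)$; and the restriction condition $(\gamma_0)_{|A} = (\gamma_1)_{|A}$ holds since both maps equal $\gamma(x_0)$ at $x_0$. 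The lemma yields a homotopy relative to $\{ x_0 \}$ between $\hat{\gamma}$ and $\gamma$, and since both maps take values inside $L(\Omega,\KK)$, the homotopy stays inside $L(\Omega,\KK)$. As the homotopy fixes $x_0$ and $\hat{\gamma}(x_0) = \gamma(x_0)$, this is precisely a homotopy relative to $(\{ x_0 \},\{ \gamma(x_0) \})$.

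Because the two central ingredients carry the entire burden, I do not expect a genuine obstacle. The only points deserving care are verifying that $\gamma(C)$ really is a compact subspace sitting inside $\bar{L}(\Omega,\KK)$ so that Corollary~\ref{cor:projcompact} is applicable, and confirming that the containment hypothesis of Lemma~\ref{lem:subhomotopy} is checked as an inclusion of essential images for each $x$ separately rather than only in aggregate; both are immediate from the stated properties of $p$.
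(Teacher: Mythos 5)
Your proposal is correct and follows essentially the same route as the paper: form $C'=\gamma(C)$, apply Corollary~\ref{cor:projcompact} with marked point $\gamma(x_0)$ to get $p$, set $\hat{\gamma}=p\circ\gamma$, and invoke Lemma~\ref{lem:subhomotopy} via the containment $\hat{\gamma}(x)(\Omega)\subset\gamma(x)(\Omega)$. Your verification of the relative (basepoint) condition is in fact slightly more explicit than the paper's.
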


\begin{proof} Let $C' = \gamma(C) \subset L(\Omega,\KK)$. It is compact, hence
applying corollary \ref{cor:projcompact} to it and to 
$\{ c_1^0 \} = \{ \gamma(x_0) \}$ we get a continuous map $p : C' \to L(\Omega,\KK)$
such that $\bigcup_{c \in C'}p(c)(\Omega)$ is finite, and $p(c)(\Omega)\subset c(\Omega)$
for all $c \in C'$. Therefore, letting $\hat{\gamma} = p \circ \gamma : C \to L(\Omega,\KK)$,
we get that $\bigcup_{x \in C} \hat{\gamma}(x)(\Omega)$ is finite.
Since $\hat{\gamma}(x)(\Omega) \subset \gamma(x)(\Omega)$ for all $x \in C$,
we get from Lemma \ref{lem:subhomotopy} that $\gamma$ and $\hat{\gamma}$ are homotopic, hence
the conclusion.

\end{proof}

\begin{proposition} Let $C$ be a compact space (and $x_0 \in C$), $\KK$
a simplicial complex, and a pair of continuous maps $\gamma_0,\gamma_1 :
C \to L(\Omega,\KK)$ (with $\gamma_0(x_0) = \gamma_1(x_0)$). If $\gamma_0$ and $\gamma_1$ are homotopic
as maps in $\LL(\Omega,\KK)$ (relative to $(\{x_0\}, \{\gamma_0(x_0) \})$), then they are homotopic inside $L(\Omega,\KK)$ (relative to $(\{x_0\}, \{\gamma_0(x_0) \})$).
\end{proposition}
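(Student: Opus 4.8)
The plan is to first replace $\gamma_0$ and $\gamma_1$ by homotopic maps having finite essential image, and then to push the given $\LL(\Omega,\KK)$-homotopy into $L(\Omega,\KK)$ by composing it with the projection furnished by Proposition \ref{prop:projcompact}, keeping the two endpoints fixed. The reason one cannot feed the raw homotopy directly to Proposition \ref{prop:projcompact} is that the subset $C_0$ to be fixed must have finite union of essential images, whereas $\gamma_0(C)$ and $\gamma_1(C)$ need not satisfy this; the preliminary reduction is exactly what removes this obstruction.

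First I would apply Proposition \ref{prop:homotopimfinie2} to each of $\gamma_0$ and $\gamma_1$ (with basepoint $x_0$), obtaining continuous maps $\hat\gamma_0,\hat\gamma_1 : C \to L(\Omega,\KK)$ with $\bigcup_{x\in C}\hat\gamma_i(x)(\Omega)$ finite, each homotopic to $\gamma_i$ inside $L(\Omega,\KK)$ relative to $(\{x_0\},\{\gamma_i(x_0)\})$. Since that proposition preserves the value at $x_0$, we get $\hat\gamma_0(x_0)=\gamma_0(x_0)=\gamma_1(x_0)=\hat\gamma_1(x_0)$, so the reduced maps still agree at the basepoint. Concatenating the $L$-homotopy $\hat\gamma_0\simeq\gamma_0$, the given $\LL$-homotopy $\gamma_0\simeq\gamma_1$, and the $L$-homotopy $\gamma_1\simeq\hat\gamma_1$ — all relative to $x_0$, and all fixing the common basepoint value — yields a homotopy $H:[0,1]\times C\to\LL(\Omega,\KK)$ from $\hat\gamma_0$ to $\hat\gamma_1$, relative to $x_0$.

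Now $[0,1]\times C$ is compact, so $C'=H([0,1]\times C)$ is a compact subset of $\LL(\Omega,\KK)$. I would set $C_0=\hat\gamma_0(C)\cup\hat\gamma_1(C)\subset C'\cap L(\Omega,\KK)$; by construction $\bigcup_{c\in C_0}c(\Omega)$ is finite, so Proposition \ref{prop:projcompact} applies and produces a continuous $p:C'\to L(\Omega,\KK)$ with $p(c)=c$ for all $c\in C_0$. Setting $H'=p\circ H:[0,1]\times C\to L(\Omega,\KK)$, which is continuous, one has $H'(0,\cdot)=p\circ\hat\gamma_0=\hat\gamma_0$ and $H'(1,\cdot)=p\circ\hat\gamma_1=\hat\gamma_1$ because $\hat\gamma_i(C)\subset C_0$, while $H'(u,x_0)=p(\hat\gamma_0(x_0))=\hat\gamma_0(x_0)$ for every $u$, since $H$ is relative to $x_0$ and $\hat\gamma_0(x_0)\in C_0$. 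Thus $H'$ is a homotopy inside $L(\Omega,\KK)$ from $\hat\gamma_0$ to $\hat\gamma_1$, relative to the basepoint. Combining with the two $L$-homotopies of the previous step, transitivity gives $\gamma_0\simeq\gamma_1$ inside $L(\Omega,\KK)$ relative to $(\{x_0\},\{\gamma_0(x_0)\})$.

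The only genuine difficulty is the bookkeeping around the basepoint together with the necessity of the reduction step: replacing $\gamma_0,\gamma_1$ by finite-essential-image representatives $\hat\gamma_i$ is precisely what makes $C_0$ an admissible fixed set for the projection, after which the argument is formal. Everything else reduces to checking that $p\circ H$ restricts correctly on $\{0,1\}\times C$ and on $[0,1]\times\{x_0\}$, which follows from $p$ being the identity on $C_0$.
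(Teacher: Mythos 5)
Your proposal is correct and follows essentially the same route as the paper: reduce to finite essential image via Proposition \ref{prop:homotopimfinie2}, then compose the resulting $\LL(\Omega,\KK)$-homotopy with the projection from Proposition \ref{prop:projcompact} applied to $C'=H([0,1]\times C)$ and $C_0=\hat\gamma_0(C)\cup\hat\gamma_1(C)$. You merely make explicit the ``without loss of generality'' reduction and the basepoint bookkeeping that the paper leaves to the reader.
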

\begin{proof}
After Proposition \ref{prop:homotopimfinie2}, there exists $\hat{\gamma}_0,\hat{\gamma}_1 : C \to L(\Omega,\KK)$
such that $\hat{\gamma_i}$ is homotopic to $\gamma_i$ 
with the property that $\bigcup_{x \in C} \hat{\gamma}_i(x)(\Omega)$ is finite,
for all  $i \in \{ 0, 1 \}$. Without loss of generality, one can therefore assume
that $\bigcup_{x \in C} \gamma_i(x)(\Omega)$ is finite, for all $i \in \{ 0, 1 \}$.
Let $H : C\times [0,1] \to \LL(\Omega,\KK)$ be an homotopy
between $\gamma_0$ and $\gamma_1$. Let $C' = H(C\times [0,1])$
and $C_0 = \gamma_0(C)\cup \gamma_1(C)$.  These are two compact spaces
which satisfy the assumptions of Proposition \ref{prop:projcompact}. If
 $p : C' \to L(\Omega,\KK)$ is the continuous map afforded by this proposition,
 then $\hat{H} = p \circ H$ provides a homotopy between $\gamma_0$ and $\gamma_1$
inside $L(\Omega,\KK)$. The `relative' version of the statement is proved similarly.
\end{proof}

In particular, when $C$ is equal to the $n$-sphere $S^n$, this proves
that the natural map $[S^n,L(\Omega,\KK)]_* \to 
[S^n,\LL(\Omega,\KK)]_*$ between sets of pointed homotopy classes is injective. In order to prove Theorem
\ref{theo:maintheorem} (1),
we need to prove that it is surjective.
Let us consider a continuous map $\gamma : S^n \to \LL(\Omega,\KK)$ and set $C = \gamma(S^n)$.
It is a compact subspace of $\LL(\Omega,\KK)$. Applying Proposition
\ref{prop:projcompact} with $C_0 = \emptyset$ we get $p : C \to L(\Omega,\KK)$
such that $p(c)(\Omega) \subset c(\Omega)$ for any $c \in C$. Let then $\hat{\gamma}
 = p \circ \gamma : S^n \to L(\Omega,\KK)$. From Lemma \ref{lem:subhomotopy}
we deduce that $\hat{\gamma}$ and $\gamma$ are homotopic inside $\LL(\Omega,\KK)$,
and this concludes the proof of part (1) of Theorem \ref{theo:maintheorem}.

\section{Homotopies inside $L(\Omega,\{ 0, 1 \})$}

In this section we denote $L(2) = L(\Omega,2) = L(\Omega,\{ 0 , 1 \})$, with $d(0,1) = 1$. Since we are going to use
Lipschitz properties of maps, we specify our conventions on metrics. When $(X,d_X)$ and $(Y,d_Y)$ are two metric spaces,
we endow $X \times Y$ with the metric $d_X + d_Y$, and the space $C^0([0,1],X)$ of continuous maps $[0,1] \to X$ with the metric of uniform convergence $d(\alpha,
\beta) = \| \alpha - \beta \|_{\infty} = \sup_{t \in I}|\alpha(t) - \beta(t)|$. Recall that the topology on $C^0([0,1],X)$ induced by this metric
is the compact-open topology. For short we set
$C^0(X) = C^0([0,1],X)$.

Identifying $L(2) = L(\Omega,2)$ with the space of measurable subsets of $\Omega$ (modulo subsets of measure $0$)
endowed with the metric $d(E,F) = \la(E \Delta F)$, where
$\Delta$ is the symmetric difference operator,
we have the following lemma. This lemma can
be viewed as providing a continuous  reparametrization by arc-length of natural geodesics inside the metric space $L(2)$.
\begin{lemma} \label{lem:renorm} The exists a continuous map $\mathbf{g} : L(2) \times [0,1] \to L(2)$
such that $\mathbf{g}(A,0) = A$, $\la(\mathbf{g}(A,u)) = \la(A)(1-u)$ and $\mathbf{g}(A,u) \supset \mathbf{g}(A,v)$ for all $A$ and $u \leq v$.
Moreover, it satisfies
$$\la\left(\mathbf{g}(E,u) \Delta
\mathbf{g}(F,v) \right)\leq 4 \la(E \Delta F)  + |v-u|$$ for all $E,F \in L(2)$ and $u,v \in [0,1]$.
\end{lemma}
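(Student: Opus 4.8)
The plan is to realise $\mathbf{g}(A,u)$ as the operation of eroding $A$ from the ``left'', removing a portion of measure exactly $u\,\la(A)$. After fixing an identification $\Omega \simeq [0,1]$ with Lebesgue measure, I would introduce for each measurable $A \subseteq [0,1]$ the cumulative function $\phi_A(t) = \la(A \cap [0,t])$, which is absolutely continuous and non-decreasing with $\phi_A' = \un_A$ almost everywhere. For $m \ge 0$ set $R(A,m) = \{ t \in A \ ; \ \phi_A(t) > m \}$ (which is empty once $m \ge \la(A)$) and define $\mathbf{g}(A,u) = R(A, u\,\la(A))$. The one computational fact I would establish at the outset is the change-of-variables identity $\int_A h(\phi_A(t))\,\dd t = \int_0^{\la(A)} h(x)\,\dd x$, valid for bounded measurable $h$ by the substitution $x = \phi_A(t)$ together with $\phi_A' = \un_A$; taking $h = \un_{(m,\infty)}$ yields $\la(R(A,m)) = \la(A) - m$ for $0 \le m \le \la(A)$.

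From this the three structural properties are immediate. Taking $m=0$ gives $\mathbf{g}(A,0) = \{t \in A ; \phi_A(t) > 0\}$, which differs from $A$ only by the set $\{t\in A ; \la(A\cap[0,t]) = 0\}$; a short argument using the supremum of this set shows it is null, so $\mathbf{g}(A,0) = A$ in $L(2)$. The identity above gives $\la(\mathbf{g}(A,u)) = \la(A) - u\la(A) = \la(A)(1-u)$, and monotonicity of $\phi_A$ makes $m \mapsto R(A,m)$ decreasing for inclusion, whence $\mathbf{g}(A,u) \supseteq \mathbf{g}(A,v)$ for $u \le v$.

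The heart of the matter is the Lipschitz estimate, which I would obtain by the triangle inequality along the chain $R(E,u\la(E)) \to R(E,v\la(E)) \to R(F,v\la(E)) \to R(F,v\la(F))$. For a fixed set, $m \mapsto R(A,m)$ is nested with $\la(R(A,m)) = \la(A)-m$, so $\la(R(A,m_1)\,\Delta\,R(A,m_2)) \le |m_1 - m_2|$; this bounds the first step by $\la(E)|u-v| \le |u-v|$ and the third by $v|\la(E)-\la(F)| \le \la(E\Delta F)$. The crux is the middle step, at a common threshold $m$: bounding $\la(R(E,m)\,\Delta\,R(F,m))$. Here the key auxiliary bound is $\|\phi_E - \phi_F\|_\infty \le \la(E\Delta F)$, which I abbreviate $\delta$, coming from $|\phi_E(t) - \phi_F(t)| \le \la((E\Delta F)\cap[0,t])$. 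A point of $R(E,m)\setminus R(F,m)$ lies either in $E\setminus F$, or in $E\cap F$ with $\phi_F(t) \le m < \phi_E(t)$; the latter forces $\phi_E(t) \in (m, m+\delta]$, a set of $t$ of measure at most $\delta$ by the identity applied to $E$. Thus $\la(R(E,m)\setminus R(F,m)) \le \la(E\setminus F) + \delta$, and symmetrically, giving $\la(R(E,m)\,\Delta\,R(F,m)) \le \la(E\Delta F) + 2\delta = 3\delta$. Summing the three steps produces $\la(\mathbf{g}(E,u)\,\Delta\,\mathbf{g}(F,v)) \le 4\la(E\Delta F) + |u-v|$.

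Finally, continuity of $\mathbf{g}$ is not a separate argument: the displayed inequality shows $\mathbf{g}$ is $4$-Lipschitz from $L(2)\times[0,1]$ with the product metric into $L(2)$, hence continuous. I expect the only genuinely delicate point to be the straddling estimate on $\{t\in E\cap F ; \phi_F(t)\le m<\phi_E(t)\}$, where one must convert the uniform closeness of the cumulative functions $\phi_E,\phi_F$ into a measure bound via the change-of-variables identity; the remainder is bookkeeping with monotonicity and the triangle inequality.
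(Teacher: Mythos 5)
Your construction is, up to reflecting $[0,1]$, the same as the paper's: both erode $A$ by a prescribed amount of mass using the cumulative function of $A$, and the resulting sets agree modulo null sets. The difference is in how the two arguments reach the estimate $\la(\mathbf{g}(E,u)\,\Delta\,\mathbf{g}(F,v))\leq 4\la(E\Delta F)+|v-u|$. The paper introduces the normalized tail function $\varphi_E(t)=\la(E\cap[t,1])/\la(E)$ and its pseudo-inverse $\psi_E$, sets $\mathbf{g}(E,u)=E\cap[\psi_E(1-u),1]$, and then compares the two sets in one shot: after assuming WLOG $\psi_E(1-u)\leq\psi_F(1-v)$ it splits the symmetric difference into three explicit pieces, two contained in $E\Delta F$ and a third, $(E\cap F)\cap[\psi_E(1-u),\psi_F(1-v)]$, which it bounds by a chain of measure inequalities; it must also treat $\la(E)\la(F)=0$ separately because of the normalization. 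You instead work with superlevel sets $R(A,m)=\{t\in A\ ;\ \phi_A(t)>m\}$ of the unnormalized cumulative function, which removes both the pseudo-inverse and the degenerate case, and you factor the estimate through the triangle inequality along $R(E,u\la(E))\to R(E,v\la(E))\to R(F,v\la(E))\to R(F,v\la(F))$. The two threshold-variation steps are trivial ($\la(R(A,m_1)\Delta R(A,m_2))\leq|m_1-m_2|$ by nestedness and the level-set measure identity), and all the real content sits in the common-threshold comparison $\la(R(E,m)\Delta R(F,m))\leq 3\la(E\Delta F)$, which you get from $\|\phi_E-\phi_F\|_\infty\leq\la(E\Delta F)$ plus the change-of-variables identity $\int_A h(\phi_A(t))\,\dd t=\int_0^{\la(A)}h(x)\,\dd x$ applied to the ``straddling'' set. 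The bookkeeping $1+3+0$ versus $0+3+1$ lands on the same constant $4$, and your remaining verifications ($\mathbf{g}(A,0)=A$ up to a null set, the measure identity, monotonicity, and continuity as a consequence of the Lipschitz bound with respect to the sum metric on $L(2)\times[0,1]$) are all sound. What your route buys is a cleaner modularization — one reusable lemma about comparing superlevel sets of two close monotone functions — at the cost of having to justify the change-of-variables identity for the absolutely continuous $\phi_A$; the paper's route is more hands-on but entirely self-contained at the level of elementary interval estimates.
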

\begin{proof}
We fix an identification $\Omega \simeq [0,1]$.
For $E \in L(2) \setminus \{ \emptyset \}$ we define $\varphi_E(t) = \la(E \cap [t,1])/\la(E)$.
The map $\varphi_E$ is obviously (weakly) decreasing and continuous $[0,1]\to [0,1]$,
with $\varphi_E(0) = 1$ and $\varphi_E(1) = 0$. It is therefore surjective, and
we can define a (weakly) decreasing map  $\psi_E : [0,1]\to[0,1]$
by $\psi_E(u) = \inf \varphi_E^{-1}(\{ u \})$. Since $\varphi_E$ is continuous,
we have $\varphi_E(\psi_E(u)) = u$.

One defines $\mathbf{g}(E,u) = E \cap [\psi_E(1-u),1]$ if $\la(E) \neq 0$,
and $\mathbf{g}(\emptyset,u) = \emptyset$. We have $\la(\mathbf{g}(E,u)) = 
\la(E \cap [\psi_E(1-u),1]) = \varphi_E(\psi_E(1-u))\la(E) = (1-u)\la(E)$
when $\la(E) \neq 0$, and $\la(\mathbf{g}(\emptyset,u)) = 0 = \la(E) (1-u)$
if $\la(E) = 0$. It is clear that $\mathbf{g}(E,u) \subset \mathbf{g}(E,v)$ for all $u \geq v$.

Moreover, clearly $\mathbf{g}(E,0) = E$
since $E \cap [\psi_E(1),1] \subset E$
and $\la(E \cap [\psi_E(1),1]) = \varphi_E(\psi_E(1)) \la(E) = \la(E)$.
It remains to prove that $\mathbf{g}$ is continuous.

Let $E,F \in L(2)$ and $u,v \in [0,1]$. We first assume $\la(E) \la(F) > 0$.
Without loss of generality we can assume $\psi_E(1-u) \leq \psi_F(1-v)$.
Then $[\psi_E(1-u),1] \supset [\psi_F(1-v),1]$, and $\mathbf{g}(E,u) \Delta
\mathbf{g}(F,v)$ can be decomposed as
$$
\left( (E \setminus F) \cap [\psi_E(1-u),1] \right)
\cup
\left( (F \setminus E) \cap [\psi_F(1-v),1] \right)
\cup
\left( (E \cap F) \cap [\psi_E(1-u),\psi_F(1-v)] \right).
$$
Since the first two pieces are included inside $E \Delta F$,
we get
$\la(\mathbf{g}(E,u) \Delta
\mathbf{g}(F,v)) \leq \la(E \Delta F) + \la\left( (E \cap F) \cap [\psi_E(1-u),\psi_F(1-v)] \right)$.
Now $ (E \cap F) \cap [\psi_E(1-u),\psi_F(1-v)] =
 \left(E \cap F \cap [\psi_E(1-u),1] \right) \setminus
  \left(E \cap F \cap [\psi_F(1-v),1] \right)$
hence 
$$
\begin{array}{lcl}
\la \left( (E \cap F) \cap [\psi_E(1-u),\psi_F(1-v)] \right)
&=& \la \left(E \cap F \cap [\psi_E(1-u),1] \right) - \la \left(E \cap F \cap [\psi_F(1-v),1] \right) \\
&\leq& \la \left(E  \cap [\psi_E(1-u),1] \right) - \la \left(E \cap F \cap [\psi_F(1-v),1] \right) \\
&\leq& (1-u)\la(E) - \la \left(E \cap F \cap [\psi_F(1-v),1] \right)
\end{array}
$$
Now, since $F  = (E \cap F) \sqcup (F \setminus E)$,
we have $F \cap [\psi_F(1-v),1] = \left( (E \cap F) \cap [\psi_F(1-v),1] \right) \sqcup
\left( (F \setminus E) \cap [\psi_F(1-v),1] \right)$ hence
$$
\begin{array}{lcl}
(1-v) \la(F) & = & \la \left( (E \cap F) \cap [\psi_F(1-v),1] \right) + \la \left( (F \setminus E) \cap [\psi_F(1-v),1] \right) \\
&\leq & \la \left( (E \cap F) \cap [\psi_F(1-v),1] \right) + \la (F \setminus E)  \\
&\leq & \la \left( (E \cap F) \cap [\psi_F(1-v),1] \right) + \la (F \Delta E).
\end{array}
$$
It follows that $-\la \left( (E \cap F) \cap [\psi_F(1-v),1] \right) \leq \la(F \Delta E) - (1-v)\la(F)$
hence
$$
\la \left( (E \cap F) \cap [\psi_E(1-u),\psi_F(1-v)] \right) \leqslant (1-u)\la(E) + \la(F \Delta E) - (1-v)\la(F)
$$
and finally 
$$
\begin{array}{lcl}
\la(\mathbf{g}(E,u) \Delta
\mathbf{g}(F,v)) 
& \leq & 2 \la(E \Delta F) + (1-u)\la(E) - (1-v)\la(F) \\
& \leq & 2 \la(E \Delta F) +(\la(E)-\la(F)) + (v-u) \la(E) + v (\la(F)- \la(E)) \\
& \leq & 2 \la(E \Delta F) +|\la(E)-\la(F)| + |v-u| \la(E) + v |\la(F)- \la(E)| \\
& \leq & 2 \la(E \Delta F) +2 |\la(E)-\la(F)| + |v-u|   \\
& \leq & 4 \la(E \Delta F)  + |v-u|.  \\
\end{array}
$$
Therefore we get the inequality $\la(\mathbf{g}(E,u) \Delta
\mathbf{g}(F,v)) \leq 4 \la(E \Delta F)  + |v-u|$,
that we readily check to hold also when $\la(E) \la(F) = 0$.
This proves that $\mathbf{g}$ is continuous, whence the claim.
\end{proof}

We provide a 2-dimensional illustration, with $\Omega = [0,1]^2$.  
The map constructed in the proof depends on an identification $[0,1]^2 \simeq [0,1]$ (up to a set of measure $0$). An explicit one is given
by the binary-digit identification
$$
0.\eps_1\eps_2\eps_3\dots \mapsto (0.\eps_1\eps_3\eps_5\dots, 0.\eps_2\eps_4\eps_6\dots)
$$
with the $\eps_i \in \{0,1\}$.
Then, when $A$ is some (blue) rectangle, the map $u \mapsto \mathbf{g}(A,u)$ looks as follows.

\begin{center}
\input{figgg2.tex}
\end{center}

The above lemma is actually all what is needed to prove Theorem \ref{theo:maintheorem}
in the case of binary random variables, that is $S = \{ 0 ,1 \}$, as we will illustrate later (see
corollary \ref{cor:Hurewics01}). In the general case however, we shall need
a more powerful homotopy, provided by Proposition \ref{prop:phi} below. The next lemmas are preliminary
technical steps in view of its proof.

\begin{lemma} \label{lem:EAtoalpha} The map $C^0(L(2)) \times L(2) \to C^0([0,1])$
defined by $(E_{\bullet},A) \to \alpha$
where $\alpha(u) = \la(E_u \cap A)$, is 1-Lipschitz.
\end{lemma}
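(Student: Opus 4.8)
The plan is to reduce the assertion to a single pointwise inequality on measures of sets and then pass to a supremum. Recall that, by the metric conventions of this section, the product $C^0(L(2)) \times L(2)$ carries the sum metric, and $C^0([0,1])$ carries the uniform metric $\| \cdot \|_\infty$. Writing $\alpha(u) = \la(E_u \cap A)$ and $\alpha'(u) = \la(E'_u \cap A')$ for two pairs $(E_\bullet, A)$ and $(E'_\bullet, A')$, the $1$-Lipschitz property amounts exactly to
$$
\| \alpha - \alpha' \|_\infty \leqslant \sup_u \la(E_u \Delta E'_u) + \la(A \Delta A').
$$

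First I would record the elementary bound $|\la(X) - \la(Y)| \leqslant \la(X \Delta Y)$, valid for any measurable $X, Y$, which follows at once from the decomposition $\la(X) - \la(Y) = \la(X \setminus Y) - \la(Y \setminus X)$ together with the fact that both terms on the right are nonnegative and bounded by $\la(X \Delta Y)$. This same bound, applied with $A = A'$ fixed and $E = E_u$, $E' = E_v$, also yields $|\alpha(u) - \alpha(v)| \leqslant \la(E_u \Delta E_v)$, so continuity of $E_\bullet$ forces $\alpha$ to be continuous; hence $\alpha$ genuinely lies in $C^0([0,1])$, as needed for the statement to make sense.

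The key step is then the pointwise inequality, for each fixed $u \in [0,1]$,
$$
|\la(E_u \cap A) - \la(E'_u \cap A')| \leqslant \la(E_u \Delta E'_u) + \la(A \Delta A').
$$
I would prove it by inserting the intermediate quantity $\la(E'_u \cap A)$ and applying the triangle inequality, then estimating each of the two resulting differences by the elementary bound above. This uses the set identities $(E_u \cap A) \Delta (E'_u \cap A) = (E_u \Delta E'_u) \cap A$ and $(E'_u \cap A) \Delta (E'_u \cap A') = E'_u \cap (A \Delta A')$, so that the first symmetric difference is contained in $E_u \Delta E'_u$ and the second in $A \Delta A'$. Taking the supremum over $u$ and bounding $\la(E_u \Delta E'_u) \leqslant \sup_u \la(E_u \Delta E'_u)$ then gives precisely the displayed Lipschitz estimate.

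There is no genuine obstacle here; the construction is elementary. The only point requiring care is the bookkeeping that produces the constant exactly $1$ rather than $2$: one must split the difference in the single variable where the two arguments disagree and verify the two set identities, so that each symmetric difference is controlled by a \emph{single} term rather than by the sum. Accordingly I would make sure those two identities are checked explicitly before assembling the final bound.
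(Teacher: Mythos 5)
Your proof is correct. The route differs mildly from the paper's: the paper bounds $|\la(E_u\cap A)-\la(F_u\cap B)|$ directly by $\la\left((E_u\cap A)\Delta(F_u\cap B)\right)$ and then invokes the single set-theoretic inclusion $(X\cap A)\Delta(Y\cap B)\subset(X\Delta Y)\cup(A\Delta B)$, whereas you insert the hybrid term $\la(E'_u\cap A)$, apply the triangle inequality, and control each of the two resulting differences via the exact identities $(E_u\cap A)\Delta(E'_u\cap A)=(E_u\Delta E'_u)\cap A$ and $(E'_u\cap A)\Delta(E'_u\cap A')=E'_u\cap(A\Delta A')$. Both arguments are elementary, both produce the constant $1$, and both reduce to the bound $|\la(X)-\la(Y)|\leq\la(X\Delta Y)$; yours trades the paper's one-line inclusion for two equalities plus a triangle inequality, which makes the bookkeeping for the constant slightly more transparent, and you additionally check that $\alpha$ is continuous so that the map genuinely lands in $C^0([0,1])$, a point the paper leaves implicit. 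There is no substantive difference in difficulty or generality between the two.
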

\begin{proof} Let  $\alpha$, $\beta$ denote the images of 
$(E_{\bullet},A)$ and $(F_{\bullet},B)$, respectively.
Then, for all $u \in I$,
we have
$$
|\alpha(u)-\beta(u) | = |\la(E_u \cap A) - \la(F_u \cap B)| \leq \la\left((E_u \cap A) \Delta (F_u \cap B)\right)
$$
From the general set-theoretic inequality $(X \cap A) \Delta (Y \cap B) \subset (X \Delta Y) \cup (A \Delta B)$
one gets 
$$\la\left((E_u \cap A) \Delta (F_u \cap B)\right) \leq \la( E_u \Delta F_u) + \la(A \Delta B),
$$
hence $\|\alpha - \beta\|_{\infty} \leq \sup_u \la(E_u \Delta F_u) + \la(A \Delta B)$ and this proves
the claim.
\end{proof}

\begin{lemma} \label{lem:phimoins}
A map $\Phi_{-} : C^0([0,1]) \times C^0(L(2)) \times L(2) \to C^0(L(2))$ is defined as follows.
To $(a,E_{\bullet},A) \in C^0([0,1]) \times C^0(L(2)) \times L(2) \to C^0(L(2))$
one associates the map
$$
\Phi_{-}(a,E_{\bullet},A) : u  \mapsto \mathbf{g}\left( E_u \cap A , 1 - \frac{\min(a(u)\la(E_u),\alpha(u))}{\alpha(u)} \right)
$$
if $\alpha(u) \neq 0$, and otherwise $u \mapsto \emptyset$,
where $\alpha(u) = \la(A \cap E_u)$.
Then, the map $\Phi_{-}$ is continuous.
\end{lemma}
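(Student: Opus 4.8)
The plan is to reduce the continuity of $\Phi_{-}$ to the continuity of two simpler gadgets, after repackaging the formula so that the apparent singularity coming from the factor $1/\alpha(u)$ disappears. To this end I would introduce the auxiliary map $G : L(2) \times [0,1] \to L(2)$ given by $G(B,t) = \mathbf{g}(B, 1 - \min(t,\la(B))/\la(B))$ when $\la(B) \neq 0$, and $G(B,t) = \emptyset$ otherwise. By the properties of $\mathbf{g}$ in Lemma \ref{lem:renorm} one has $\la(G(B,t)) = \min(t,\la(B))$, and since $m(u) := \min(a(u)\la(E_u),\alpha(u)) \leq \alpha(u) = \la(E_u \cap A)$ the capping $\min$ is inactive, so that $\Phi_{-}(a,E_{\bullet},A)(u) = G(E_u \cap A, m(u))$ for every $u$; note that the degenerate branch $\alpha(u)=0$ is now automatically included, because $G(\emptyset,\cdot) = \emptyset$.

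Next I would prove that $G$ is continuous on all of $L(2)\times[0,1]$. At a point $(B_0,t_0)$ with $\la(B_0) > 0$, the function $B \mapsto \la(B)$ is continuous and bounded below nearby, so the exponent $1 - \min(t,\la(B))/\la(B)$ is continuous there, and Lemma \ref{lem:renorm} gives continuity of $\mathbf{g}$. At a point with $\la(B_0) = 0$, i.e. $B_0 = \emptyset$, continuity is forced by the measure identity: $d(G(B,t),\emptyset) = \la(G(B,t)) = \min(t,\la(B)) \leq \la(B) = d(B,\emptyset)$, so $G(B,t) \to \emptyset$. This is precisely where parametrizing the renormalization by the target measure, rather than by the shrinking fraction, removes the obstruction.

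I would then check that the data map $(a,E_{\bullet},A) \mapsto (B_{\bullet},m)$, with $B_{\bullet} = (u \mapsto E_u\cap A)$, is continuous from $C^0([0,1])\times C^0(L(2))\times L(2)$ to $C^0(L(2))\times C^0([0,1])$. For $B_{\bullet}$ the set inequality used in Lemma \ref{lem:EAtoalpha} yields, after taking the supremum over $u$, a $1$-Lipschitz bound. For $m$, the maps $u\mapsto \la(E_u)$ and $u \mapsto \alpha(u)=\la(E_u\cap A)$ are continuous into $C^0([0,1])$ by Lemma \ref{lem:EAtoalpha} (the former with $A=\Omega$), while products and minima of $[0,1]$-valued continuous functions are continuous for the sup metric; hence $m=\min(a\cdot\la(E_{\bullet}),\alpha)$ depends continuously on the input.

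Finally I would combine the two. Because $[0,1]$ is compact, the image $K = \{(B_u,m(u)) : u\in[0,1]\}$ of any fixed input is a compact subset of $L(2)\times[0,1]$, and the continuity of $G$ upgrades to uniform continuity along $K$ by a standard finite-subcover argument: for each $\eps>0$ there is $\delta>0$ with $\la(G(p)\,\Delta\,G(q))<\eps$ whenever $p\in K$ and $d(p,q)<\delta$. Taking a second input whose data $(B'_{\bullet},m')$ are within $\delta$ of $(B_{\bullet},m)$ in the sup metrics then bounds $\sup_u \la\bigl(G(B_u,m(u))\,\Delta\,G(B'_u,m'(u))\bigr)$ by $\eps$, which is the continuity of $\Phi_{-}$. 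The main obstacle is exactly the factor $1/\alpha(u)$: it makes the exponent fed to $\mathbf{g}$ genuinely discontinuous where $\alpha(u)=0$ and blocks any direct global Lipschitz estimate from Lemma \ref{lem:renorm}. Reparametrizing by target measure tames this, since the output measure becomes linearly controlled by $\la(B)$, and the residual, non-Lipschitz dependence is absorbed by the compactness of the image path.
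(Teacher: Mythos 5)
Your proof is correct, but it takes a genuinely different route from the paper's. The paper argues directly at a fixed input $(a,E_{\bullet},A)$: it fixes $\eps>0$, splits $[0,1]$ into the set $I_0$ where $\alpha(u)\leq\eps/3$ and its complement, uses the uniform continuity of $(x,y)\mapsto\min(x,y)/y$ on the compact rectangle $[0,1]\times[\eps/12,1]$ together with the quantitative inequality $\la(\mathbf{g}(X,x)\,\Delta\,\mathbf{g}(Y,y))\leq 4\la(X\Delta Y)+|x-y|$ of Lemma \ref{lem:renorm} on the complement of $I_0$, and on $I_0$ bounds both outputs directly by $\la(E_u\cap A)\leq\eps/3$ and $\la(F_u\cap B)\leq\eps/2$. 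You instead isolate the singularity into the single two-variable map $G(B,t)=\mathbf{g}(B,1-\min(t,\la(B))/\la(B))$ parametrized by the \emph{target measure} $t$, verify the identity $\Phi_{-}(a,E_{\bullet},A)(u)=G(E_u\cap A,\min(a(u)\la(E_u),\alpha(u)))$ (which correctly absorbs the degenerate branch since $G(\emptyset,\cdot)=\emptyset$), prove $G$ is continuous everywhere --- the point $\la(B_0)=0$ being handled by the estimate $\la(G(B,t))=\min(t,\la(B))\leq\la(B)$ --- and then conclude by composing with a continuous data map and using compactness of the image of $[0,1]$ to get a one-sided uniform modulus for $G$ near that compact set. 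Both arguments confront the same obstruction (the factor $1/\alpha(u)$); yours is more modular and reusable (the continuity of $G$ is a clean standalone fact), at the cost of an extra compactness step, whereas the paper's is more hands-on and yields explicit constants. Two small points to make explicit in a final write-up: you need $u\mapsto(E_u\cap A,m(u))$ continuous (immediate, since $X\mapsto X\cap A$ is $1$-Lipschitz) both to ensure $\Phi_{-}(a,E_{\bullet},A)$ genuinely lands in $C^0(L(2))$ and to make $K$ compact; and the covering argument should be phrased for the sum metric $d_{L(2)}+d_{[0,1]}$ used in the paper, which changes nothing essential.
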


\begin{proof}
Let us fix $(a,E_{\bullet},A) \in C^0([0,1]) \times C^0(L(2)) \times L(2) $, and let $\eps > 0$.
Consider $\hat{m} : [0,1]\times [\eps/12,1] \to [0,1]$
be defined by $\hat{m}(x,y) = \min(x,y)/y$. It is clearly continuous on the compact space $[0,1]\times [\eps/12,1]$,
hence unformly continuous, hence there exists $\eta > 0$ such that $\max(|x_1 -x_2|,|y_1 - y_2|) < \eta \Rightarrow
|\hat{m}(x_1,y_1) - \hat{m}(x_2,y_2)| \leq \eps/6$. Clearly one can assume $\eta \leq \eps/6$ as well.

Let us then consider $(b,F_{\bullet},B) \in C^0([0,1]) \times C^0(L(2)) \times L(2) $
such that $\| a - b \|_{\infty} + \sup_u \la(E_u\Delta F_u) + \la(A \Delta B) \leq \eta$.
From Lemma \ref{lem:EAtoalpha},  we get $\| \alpha - \beta \|_{\infty} \leq \eta$.
Let us consider $I_0 = \{ u \in [0,1] \ | \ \alpha(u) \leq \eps/3 \}$. We have by definition
$\alpha([0,1] \setminus I_0) \subset ]\eps/3,1] \subset [\eps/12,1]$
and, since $\| \alpha - \beta \|_{\infty} \leq \eps/6$,
we have 
$\beta([0,1] \setminus I_0) \subset ]\eps/6,1] \subset [\eps/12,1]$.
Moreover, since
$$
\begin{array}{lclcl}
|a(u)\la(E_u) - b(u)\la(F_u)| &\leq& |a(u) - b(u)|\la(E_u)  + b(u)|\la(E_u) - \la(F_u)| \\
&\leq &|a(u) - b(u)| + \la(E_u \Delta F_u) &\leq& \eta \\
\end{array} 
$$
we get that, for all $u \not\in I_0$, we have
$|\hat{m}(a(u)\la(E_u)),\alpha(u)) - \hat{m}(b(u)\la(F_u),\beta(u)) | \leq \eps/6$.
Moreover, since in particular $\alpha(u) \beta(u) \neq 0$,
we get from the general inequality $\la(\mathbf{g}(X,x) \Delta \mathbf{g}(Y,y)) \leq
4 \la(X \Delta Y) + |x-y|$ of Lemma \ref{lem:renorm} that,
for all $u \not\in I_0$,
$$
\begin{array}{lcl}
d\left(\Phi_{-}(a,E_{\bullet},A)(u), \Phi_{-}(b,F_{\bullet},B)(u)\right) &\leq& 4 \la((E_u \cap A) \Delta (F_u \cap B))
\\ && + |\hat{m}(a(u)\la(E_u),\alpha(u)) - \hat{m}(b(u)\la(F_u),\beta(u)) | \\
& \leq & 4 \left( \la(E_u \Delta F_u) +  \la (A \Delta B) \right) + \eps/6\\
& \leq & 4 \eps/6 + \eps/6\\
& < & \eps
\end{array}
$$
Now, if $u \in I_0$, then $\Phi_{-}(a,E_{\bullet},A)(u) \subset E_u \cap A$
hence $\la(\Phi_{-}(a,E_{\bullet},A)(u)) \leq \la(E_u \cap A) = \alpha(u) \leq \eps/3$
and $\la(\Phi_{-}(b,F_{\bullet},B)(u)) \leq \la(F_u \cap B) = \beta(u) \leq \eps/3 + \eps/6 = \eps/2$,
whence $$d\left(\Phi_{-}(a,E_{\bullet},A)(u), \Phi_{-}(b,F_{\bullet},B)(u)\right) \leqslant
\la(\Phi_{-}(a,E_{\bullet},A)(u)) + \la(\Phi_{-}(b,F_{\bullet},B)(u)) \leqslant 5 \eps/6 < \eps.$$
It follows that $d\left(\Phi_{-}(a,E_{\bullet},A), \Phi_{-}(b,F_{\bullet},B)\right) \leq \eps$
and $\Phi_{-}$ is continuous at $(a,E_{\bullet},A)$, which proves the claim.

\end{proof}

We use the convention $\mathbf{g}(X,t) = X$ for $t \leq 0$ and $\mathbf{g}(X,t) = \emptyset$
for $t > 1$, so that $\mathbf{g}$ is extended to a continuous map $L(2) \times \R \to L(2)$. The notation $^c A$ denotes the complement inside $\Omega$ of the set $A$, identified with an element of $L(\Omega,2)$.

\begin{lemma} \label{lem:phiplus} 
A map $\Phi_{+} : C^0([0,1]) \times C^0(L(2)) \times L(2) \to C^0(L(2))$ is defined as follows.
To $(a,E_{\bullet},A) \in C^0([0,1]) \times C^0(L(2)) \times L(2) \to C^0(L(2))$
one associates the map
$$
\Phi_{+}(a,E_{\bullet},A) : u  \mapsto \mathbf{g}\left( E_u \cap ( ^c A) , 1 - \frac{\max(0,a(u)\la(E_u)-\alpha(u))}{\la(E_u) - \alpha(u)} \right)
$$
if $\alpha(u) \neq \la(E_u)$, and otherwise $u \mapsto \emptyset$,
where $\alpha(u) = \la(A \cap E_u)$.
Then, the map $\Phi_{+}$ is continuous.
\end{lemma}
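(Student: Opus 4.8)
The plan is to mirror, almost verbatim, the proof of lemma \ref{lem:phimoins}, performing throughout the substitution $A \rightsquigarrow {}^cA$ in the set that is being truncated, replacing the denominator $\alpha(u)=\la(A\cap E_u)$ by $\gamma(u)=\la(({}^cA)\cap E_u)=\la(E_u)-\alpha(u)$, and replacing the auxiliary function $\hat m(x,y)=\min(x,y)/y$ by $\hat M(x,y)=\max(0,x)/y$. First I would fix $(a,E_\bullet,A)$ and $\eps>0$. Since complementation $A\mapsto {}^cA$ is an isometry of $L(2)$ (because $\la({}^cA\,\Delta\,{}^cB)=\la(A\Delta B)$), lemma \ref{lem:EAtoalpha} applies with second argument ${}^cA$ and shows that $(E_\bullet,A)\mapsto\gamma$, with $\gamma(u)=\la(E_u\cap {}^cA)$, is $1$-Lipschitz; in particular $\|\gamma-\delta\|_\infty\le \sup_u\la(E_u\Delta F_u)+\la(A\Delta B)$, where $\delta(u)=\la(F_u\cap {}^cB)$. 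This is the exact analogue of the control of $\alpha$ used in lemma \ref{lem:phimoins}.

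Next I would set up the auxiliary function on a compact rectangle. Writing $x(u)=a(u)\la(E_u)-\alpha(u)$, the second argument of $\mathbf{g}$ is $1-\hat M(x(u),\gamma(u))$. Because $a(u)\in[0,1]$ one has $a(u)\la(E_u)-\alpha(u)\le \la(E_u)-\alpha(u)=\gamma(u)$, so $x(u)\in[-1,1]$ and $\hat M(x(u),\gamma(u))\in[0,1]$ whenever $\gamma(u)>0$. Thus $\hat M$ may be taken on $[-1,1]\times[\eps/12,1]$, where it is continuous, hence uniformly continuous; I would choose $\eta>0$ (with $\eta\le\eps/6$) realizing the modulus of continuity for target accuracy $\eps/6$, noting that the numerator perturbation is $|x(u)-x'(u)|\le|a(u)\la(E_u)-b(u)\la(F_u)|+|\alpha(u)-\beta(u)|\le 2\eta$ (each term being $\le\eta$ exactly as in lemma \ref{lem:phimoins}), so $\eta$ should be shrunk accordingly.

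Then, for $(b,F_\bullet,B)$ with $\|a-b\|_\infty+\sup_u\la(E_u\Delta F_u)+\la(A\Delta B)\le\eta$, I would split $[0,1]=I_0\sqcup([0,1]\setminus I_0)$ along $I_0=\{u\ |\ \gamma(u)\le\eps/3\}$. On the complement, $\gamma(u),\delta(u)\in[\eps/12,1]$ by the Lipschitz bound, so uniform continuity of $\hat M$ bounds the difference of the two $\mathbf{g}$-arguments by $\eps/6$, and lemma \ref{lem:renorm} together with $(E_u\cap {}^cA)\,\Delta\,(F_u\cap {}^cB)\subset(E_u\Delta F_u)\cup(A\Delta B)$ gives $d\le 4\eta+\eps/6<\eps$ pointwise. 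On $I_0$, the two outputs are subsets of $E_u\cap {}^cA$ and $F_u\cap {}^cB$ of measure $\le\gamma(u)\le\eps/3$ and $\le\delta(u)\le\eps/2$ respectively, whence $d\le\la(\Phi_+(a,E_\bullet,A)(u))+\la(\Phi_+(b,F_\bullet,B)(u))\le 5\eps/6<\eps$. Taking the supremum over $u$ yields $d(\Phi_+(a,E_\bullet,A),\Phi_+(b,F_\bullet,B))\le\eps$, i.e. continuity at $(a,E_\bullet,A)$.

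The only genuinely new point compared with lemma \ref{lem:phimoins}, and the step to get right, is that the denominator is now $\gamma(u)=\la(E_u)-\alpha(u)$ rather than $\alpha(u)$: one must check that it is again $1$-Lipschitz in the data (which I handle by applying lemma \ref{lem:EAtoalpha} to ${}^cA$ and invoking that complementation is an isometry) and that it is bounded below on the good region so that $\hat M$ lives on a compact rectangle. Everything else is bookkeeping of constants. As an alternative route one may note the identity $\max(0,a(u)\la(E_u)-\alpha(u))=a(u)\la(E_u)-\min(a(u)\la(E_u),\alpha(u))$, exhibiting the numerator of $\Phi_+$ in terms of quantities already appearing in lemma \ref{lem:phimoins}; but since the denominators differ, repeating the estimate directly is cleaner than deducing it formally.
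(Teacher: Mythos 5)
Your proposal is correct and is exactly what the paper intends: the paper's ``proof'' of this lemma consists of the single sentence that it is similar to the proof of lemma \ref{lem:phimoins} and is left to the reader, and you have carried out precisely that adaptation (replacing $\alpha$ by $\gamma(u)=\la(E_u\cap{}^cA)$, using that complementation is an isometry to re-apply lemma \ref{lem:EAtoalpha}, and adjusting $\hat m$ to $\hat M$ with the accompanying constants).
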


The proof is similar to the one of the previous lemma, and left to the reader.

\begin{lemma} \label{lem:lipcup} The map $(f,g) \mapsto (t \mapsto f(t) \cup g(t))$
is continuous $C^0(L(2))^2 \to C^0(L(2))$, and even 1-Lipschitz.
\end{lemma}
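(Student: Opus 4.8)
The plan is to reduce everything to a single pointwise set-theoretic estimate, in exactly the spirit of the inclusion $(X \cap A) \Delta (Y \cap B) \subset (X \Delta Y) \cup (A \Delta B)$ already used in the proof of Lemma \ref{lem:EAtoalpha}. First I would record the analogous inclusion for unions: for any four measurable subsets of $\Omega$,
$$
(A_1 \cup B_1) \Delta (A_2 \cup B_2) \subset (A_1 \Delta A_2) \cup (B_1 \Delta B_2).
$$
This is checked by a direct membership argument. If $x$ lies in exactly one of $A_1 \cup B_1$ and $A_2 \cup B_2$, say $x \in A_1 \cup B_1$ and $x \not\in A_2 \cup B_2$, then $x \not\in A_2$ and $x \not\in B_2$, while $x$ belongs to $A_1$ or to $B_1$; in the first case $x \in A_1 \Delta A_2$, in the second $x \in B_1 \Delta B_2$. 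Applying $\la$ and using subadditivity then yields
$$
\la\!\left((A_1 \cup B_1) \Delta (A_2 \cup B_2)\right) \leq \la(A_1 \Delta A_2) + \la(B_1 \Delta B_2).
$$

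Next I would use this to settle both points of the statement at once. For well-definedness, given $f,g \in C^0(L(2))$ I set $h(t) = f(t) \cup g(t)$ and apply the inclusion with the pairs $\{f(s),f(t)\}$ and $\{g(s),g(t)\}$ to obtain $d(h(s),h(t)) \leq d(f(s),f(t)) + d(g(s),g(t))$; continuity of $f$ and $g$ then forces $h$ to be continuous, so $(f,g) \mapsto h$ indeed maps into $C^0(L(2))$. For the Lipschitz bound, given two pairs $(f_1,g_1)$ and $(f_2,g_2)$ I apply the inequality pointwise with $A_i = f_i(t)$ and $B_i = g_i(t)$, obtaining for every $t \in [0,1]$
$$
d\!\left(f_1(t) \cup g_1(t),\, f_2(t) \cup g_2(t)\right) \leq d(f_1(t),f_2(t)) + d(g_1(t),g_2(t)).
$$
Taking the supremum over $t$, and using that the supremum of a sum is at most the sum of the suprema, gives $d(h_1,h_2) \leq d(f_1,f_2) + d(g_1,g_2)$, which is precisely the $1$-Lipschitz estimate relative to the product metric $d_X + d_Y$ fixed at the start of the section.

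There is no genuine obstacle here: the only point requiring care is the order of the argument, since one is not entitled to speak of a map into $C^0(L(2))$ until the image has been shown to be continuous. I would therefore establish continuity of $t \mapsto f(t) \cup g(t)$ first and only then pass to the Lipschitz estimate. Both facts flow from the same elementary inclusion, and no measure-theoretic subtlety beyond the subadditivity of $\la$ intervenes.
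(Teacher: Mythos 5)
Your proof is correct and follows essentially the same route as the paper: the key set-theoretic inclusion $(A_1 \cup B_1) \Delta (A_2 \cup B_2) \subset (A_1 \Delta A_2) \cup (B_1 \Delta B_2)$ is exactly the one the paper uses, and the paper likewise deduces that $(X,Y)\mapsto X\cup Y$ is $1$-Lipschitz on $L(2)^2$ and then passes to the induced map on $C^0$ spaces. Your version merely makes explicit the well-definedness step (continuity of $t \mapsto f(t)\cup g(t)$) that the paper absorbs into the general statement about induced maps $C^0(L(2)^2) = C^0(L(2))^2 \to C^0(L(2))$.
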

\begin{proof}
The map $(X,Y) \mapsto X \cup Y$ is 1-Lipschitz because of the general
set-theoretic fact 
$(X_1 \cup Y_1) \Delta (X_2 \cup Y_2) \subset (X_1 \Delta X_2) \cup (Y_1 \Delta Y_2)$
from which we deduce
$\la((X_1 \cup Y_1) \Delta (X_2 \cup Y_2)) \leq \la(X_1 \Delta X_2) + \la(Y_1 \Delta Y_2)$,
which proves that $(X,Y) \mapsto X \cup Y$ is 1-Lipschitz $L(2)^2 \to L(2)$. It
follows that the induced map $C^0(L(2)^2) = C^0(L(2))^2 \to C^0(L(2))$
is 1-Lipschitz and thus continuous, too.
\end{proof}

The following proposition informally says that, when $E_{\bullet} \in C^0(L(2))$ is a path inside $L(2)$ with $A \subset E_0$, then we can find another path $\Phi_{\bullet} \in C^0(L(2))$ such that $\Phi_u \subset E_u$ for all $u$, and the ratio $\la(\Phi_{\bullet})/\la(E_{\bullet})$ follows any previously specified variation starting at $\la(A)/\la(E_0)$ -- and, moreover, that this can be done continuously.

\begin{proposition} \label{prop:phi} There exists a continuous map
$\Phi : C^0([0,1]) \times C^0(L(2)) \times L(2) \to C^0(L(2))$ 
having the following 
properties.
\begin{itemize}
\item for all $(a,E_{\bullet},A) \in C^0([0,1]) \times C^0(L(2)) \times L(2)$ such that
$A \subset E_0$ and $a(0)\la(E_0)  = \la(A)$, we have
$\Phi(a,E_{\bullet},A)(0) = A$
\item for all $u \in [0,1]$,
$\Phi(a,E_{\bullet},A)(u) \subset E_u$ and
$\la(\Phi(a,E_{\bullet},A)(u)) = a(u)\la(E_u)$
\item if $a$ and $E_{\bullet}$ are constant maps, then so is $\Phi(a,E_{\bullet},A)$.
\end{itemize}

\end{proposition}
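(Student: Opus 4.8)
The plan is to set $\Phi(a,E_{\bullet},A) = \Phi_{-}(a,E_{\bullet},A) \cup \Phi_{+}(a,E_{\bullet},A)$, the pointwise union of the two maps built in Lemmas \ref{lem:phimoins} and \ref{lem:phiplus}. Continuity of $\Phi$ is then immediate: it is the composite of the continuous map $(a,E_{\bullet},A) \mapsto (\Phi_{-}(a,E_{\bullet},A),\Phi_{+}(a,E_{\bullet},A))$ with the union map $C^0(L(2))^2 \to C^0(L(2))$, which is $1$-Lipschitz by Lemma \ref{lem:lipcup}. All the genuine analytic work has thus already been carried out in the preceding lemmas, and what remains is a bookkeeping of measures.

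First I would record the measures of the two pieces. Writing $\alpha(u) = \la(A \cap E_u)$ and using $a(u) \in [0,1]$, the defining property of $\mathbf{g}$ from Lemma \ref{lem:renorm} (namely $\la(\mathbf{g}(X,t)) = (1-t)\la(X)$ for $t \in [0,1]$) gives, for every $u$,
$$\la(\Phi_{-}(a,E_{\bullet},A)(u)) = \min(a(u)\la(E_u),\alpha(u)), \qquad \la(\Phi_{+}(a,E_{\bullet},A)(u)) = \max(0,a(u)\la(E_u) - \alpha(u)),$$
the degenerate cases $\alpha(u) = 0$ and $\alpha(u) = \la(E_u)$ being checked directly against the convention that the value is then $\emptyset$. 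I must verify that the exponents $1 - \min(\cdots)/\alpha(u)$ and $1 - \max(\cdots)/(\la(E_u)-\alpha(u))$ indeed lie in $[0,1]$; this is exactly where $0 \le a(u) \le 1$ is used, so that $\mathbf{g}$ is applied within its genuine range and the measure formula applies.

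Next comes the key identity. Since $\Phi_{-}(a,E_{\bullet},A)(u) \subset E_u \cap A$ and $\Phi_{+}(a,E_{\bullet},A)(u) \subset E_u \cap ({}^c A)$, the two pieces are disjoint and both contained in $E_u$; hence $\Phi(a,E_{\bullet},A)(u) \subset E_u$ and its measure is the sum of the two displayed quantities. The elementary identity $\min(x,\alpha) + \max(0,x-\alpha) = x$, valid for all reals, applied with $x = a(u)\la(E_u)$, then yields $\la(\Phi(a,E_{\bullet},A)(u)) = a(u)\la(E_u)$ for every $u$ and every input. In particular the inclusion and measure assertions hold unconditionally, and only the initial condition will require the hypotheses on $(a,E_{\bullet},A)$.

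Finally I would check that initial condition under the assumptions $A \subset E_0$ and $a(0)\la(E_0) = \la(A)$. Then $\alpha(0) = \la(A \cap E_0) = \la(A) = a(0)\la(E_0)$, so the $\Phi_{-}$-exponent at $u = 0$ is $1 - \alpha(0)/\alpha(0) = 0$, giving $\Phi_{-}(a,E_{\bullet},A)(0) = \mathbf{g}(E_0 \cap A, 0) = A$; and the $\Phi_{+}$-numerator $\max(0,a(0)\la(E_0)-\alpha(0))$ vanishes, so its exponent equals $1$ and $\Phi_{+}(a,E_{\bullet},A)(0)$ has measure $0$, i.e.\ equals $\emptyset$. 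Hence $\Phi(a,E_{\bullet},A)(0) = A \cup \emptyset = A$, with the trivial case $\la(A) = 0$ handled separately (both pieces are then $\emptyset$). I do not expect a real obstacle here beyond the min/max arithmetic: the lemmas were engineered precisely so that $\Phi_{-}$ supplies as much of the target mass $a(u)\la(E_u)$ as can be drawn from inside $A$, while $\Phi_{+}$ provides the overflow from outside $A$, so that gluing by union produces a subset of $E_u$ of the prescribed mass reducing to $A$ at time $0$.
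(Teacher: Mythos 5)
Your proposal is correct and follows essentially the same route as the paper: define $\Phi$ as the pointwise union of $\Phi_-$ and $\Phi_+$, get continuity from Lemmas \ref{lem:phimoins}, \ref{lem:phiplus} and \ref{lem:lipcup}, use disjointness of the two pieces inside $E_u$ together with the identity $\min(x,\alpha)+\max(0,x-\alpha)=x$ for the measure count, and verify the initial condition from $\alpha(0)=\la(A)=a(0)\la(E_0)$. The extra checks you flag (that the $\mathbf{g}$-exponents lie in $[0,1]$ and the degenerate cases) are sound and only make explicit what the paper leaves implicit.
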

\begin{proof}

We define $\Phi(a,E_{\bullet},A)(u) = \Phi_{-}(a,E_{\bullet},A)(u) \cup \Phi_{+}(a,E_{\bullet},A)(u)$.
By the definition of $\Phi_{\pm}$ in Lemmas \ref{lem:phimoins} and \ref{lem:phiplus}, the last property is clear.
By combining Lemmas \ref{lem:phimoins}, \ref{lem:phiplus} and \ref{lem:lipcup} we get that
$\Phi$ is continuous. Moreover, $\Phi_{-}(a,E_{\bullet},A)(u) \subset E_u \cap A$
and $\Phi_{+}(a,E_{\bullet},A)(u) \subset E_u \cap (^cA)$
hence $\Phi(a,E_{\bullet},A)(u) = \Phi_{-}(a,E_{\bullet},A)(u) \sqcup \Phi_{+}(a,E_{\bullet},A)(u) \subset E_u$,
with $\la(\Phi(a,E_{\bullet},A)(u)) = \la(\Phi_{-}(a,E_{\bullet},A)(u)) + \la(\Phi_{+}(a,E_{\bullet},A)(u))$.
Letting $\alpha(u) = \la(E_u \cap A)$, again by Lemmas \ref{lem:phimoins} and \ref{lem:phiplus} we get
$$\la(\Phi_{-}(a,E_{\bullet},A)(u)) = \la\left( \mathbf{g}\left( E_u \cap A , 1 - \frac{\min(a(u)\la(E_u),\alpha(u))}{\alpha(u)} \right)\right)
=\min(a(u)\la(E_u),\alpha(u))
$$
and, since $\la(E_u) - \alpha(u) = \la(E_u) - \la(A \cap E_u) = \la( ( ^c A) \cap E_u)$, 
$$\la(\Phi_{+}(a,E_{\bullet},A)(u)) = \la\left( \mathbf{g}\left( E_u \cap (^cA) , 1 - \frac{\max(0,a(u)\la(E_u)-\alpha(u))}{\la((^c A) \cap E_u) } \right)\right)
= \max(0,a(u)\la(E_u)-\alpha(u)) .
$$
Therefore we get $\la(\Phi(a,E_{\bullet},A)(u)) = \max(0,a(u)\la(E_u)-\alpha(u)) + \min(a(u)\la(E_u),\alpha(u)) = a(u)\la(E_u)$ for all $u \in [0,1]$. Finally, since $A \subset E_0$ and 
$\alpha(0) = \la(E_0 \cap A) = \la(A) = \la(E_0)a(0)$,
we get that $\Phi(a,E_{\bullet},A)(0) = \mathbf{g}(E_0 \cap A,0)\cup \mathbf{g}(E_0 \cap (^c A), 1) = A \cup \emptyset = A$,
and this proves the claim.
\end{proof}

As before, we provide an illustration, when $A \subset \Omega$ is the same (blue) rectangle, and $E_{\bullet}$ associates continuously to any $u \in [0,1]$ some rectangle, whose boundary is dashed and in red. In this example, the map $a$ is taken to be affine, from $\la(A)/\la(E_0)$ to $0$. The first row depicts the map $u \mapsto E_u$, and the second row superposes it with
the map $u \mapsto \Phi(a,E_{\bullet},A)(u)$, depicted
in blue.

\begin{center}
\begin{tikzpicture}[scale=.45]
\begin{scope}[scale=.1,shift={(0,0)}]
\draw (0,0) -- (32,0) -- (32,32) -- (0,32) -- cycle;
\fill[blue] (5,7) -- (5,15) -- (20,15) -- (20,7) -- cycle;
\draw (18,-5) node {$u \sim 0.0$};
\draw[dashed,thick,red]  (2,3) -- (2,20) -- (25,20) -- (25,3) -- cycle;
\end{scope}
\begin{scope}[scale=.1,shift={(33,0)}]
\draw (0,0) -- (32,0) -- (32,32) -- (0,32) -- cycle;
\fill[blue] (5,7) -- (5,15) -- (20,15) -- (20,7) -- cycle;
\draw (18,-5) node {$u \sim 0.1$};
\draw[dashed,thick,red]  (8/3,3) -- (8/3,190/9) -- (215/9,190/9) -- (215/9,3) -- cycle;
\end{scope}
\begin{scope}[scale=.1,shift={(66,0)}]
\draw (0,0) -- (32,0) -- (32,32) -- (0,32) -- cycle;
\fill[blue] (5,7) -- (5,15) -- (20,15) -- (20,7) -- cycle;
\draw (18,-5) node {$u \sim 0.2$};
\draw[dashed,thick,red]  (10/3,3) -- (10/3,200/9) -- (205/9,200/9) -- (205/9,3) -- cycle;
\end{scope}
\begin{scope}[scale=.1,shift={(99,0)}]
\draw (0,0) -- (32,0) -- (32,32) -- (0,32) -- cycle;
\fill[blue] (5,7) -- (5,15) -- (20,15) -- (20,7) -- cycle;
\draw (18,-5) node {$u \sim 0.3$};
\draw[dashed,thick,red]  (4,3) -- (4,70/3) -- (65/3,70/3) -- (65/3,3) -- cycle;
\end{scope}
\begin{scope}[scale=.1,shift={(132,0)}]
\draw (0,0) -- (32,0) -- (32,32) -- (0,32) -- cycle;
\fill[blue] (5,7) -- (5,15) -- (20,15) -- (20,7) -- cycle;
\draw (18,-5) node {$u \sim 0.4$};
\draw[dashed,thick,red]  (14/3,3) -- (14/3,220/9) -- (185/9,220/9) -- (185/9,3) -- cycle;
\end{scope}
\begin{scope}[scale=.1,shift={(165,0)}]
\draw (0,0) -- (32,0) -- (32,32) -- (0,32) -- cycle;
\fill[blue] (5,7) -- (5,15) -- (20,15) -- (20,7) -- cycle;
\draw (18,-5) node {$u \sim 0.5$};
\draw[dashed,thick,red]  (16/3,3) -- (16/3,230/9) -- (175/9,230/9) -- (175/9,3) -- cycle;
\end{scope}
\begin{scope}[scale=.1,shift={(198,0)}]
\draw (0,0) -- (32,0) -- (32,32) -- (0,32) -- cycle;
\fill[blue] (5,7) -- (5,15) -- (20,15) -- (20,7) -- cycle;
\draw (18,-5) node {$u \sim 0.6$};
\draw[dashed,thick,red]  (6,3) -- (6,80/3) -- (55/3,80/3) -- (55/3,3) -- cycle;
\end{scope}
\begin{scope}[scale=.1,shift={(231,0)}]
\draw (0,0) -- (32,0) -- (32,32) -- (0,32) -- cycle;
\fill[blue] (5,7) -- (5,15) -- (20,15) -- (20,7) -- cycle;
\draw (18,-5) node {$u \sim 0.7$};
\draw[dashed,thick,red]  (20/3,3) -- (20/3,250/9) -- (155/9,250/9) -- (155/9,3) -- cycle;
\end{scope}
\begin{scope}[scale=.1,shift={(264,0)}]
\draw (0,0) -- (32,0) -- (32,32) -- (0,32) -- cycle;
\fill[blue] (5,7) -- (5,15) -- (20,15) -- (20,7) -- cycle;
\draw (18,-5) node {$u \sim 0.8$};
\draw[dashed,thick,red]  (22/3,3) -- (22/3,260/9) -- (145/9,260/9) -- (145/9,3) -- cycle;
\end{scope}
\begin{scope}[scale=.1,shift={(297,0)}]
\draw (0,0) -- (32,0) -- (32,32) -- (0,32) -- cycle;
\fill[blue] (5,7) -- (5,15) -- (20,15) -- (20,7) -- cycle;
\draw (18,-5) node {$u \sim 0.9$};
\draw[dashed,thick,red]  (8,3) -- (8,30) -- (15,30) -- (15,3) -- cycle;
\end{scope}
\end{tikzpicture}

\end{center}
\begin{center}
\input{figph2.tex}
\end{center}

\section{Probability law}

\subsection{The law maps}
Recall from \cite{SPANIER} that the weak (or coherent) topology on
$|\mathcal{K}|$ is the topology such that $U$ is open in $|\mathcal{K}|$
iff $U \cap |F|$ is open for every $F \in \mathcal{K}$,
where $|F| = \{ \alpha : F \to [0,1] \ | \ \sum_{s \in F} \alpha(s) = 1 \}$
is given the topology induced from the product topology of $[0,1]^F$. 
For each $p \geq 1$, we can put a metric topology on the same set,
in order to define a metric space $|\KK|_{d_p}$ by
the metric $d_p(\alpha,\beta) = \sqrt[p]{\sum_{s \in S}
|\alpha(s)-\beta(s)|^p}$. The map $|\KK| \to |\KK|_{d_p}$ is continuous, and it is an homeomorphism iff $|\KK|$ is metrizable iff it is satisfies the first axiom of countability,
iff $\mathcal{K}$ is locally finite (see \cite{SPANIER} p. 119 ch. 3 sec. 2 Theorem 8 for the case $p=2$, but the proof works for $p\neq 2$ as well).

For $\alpha : S \to [0,1]$, we denote the \emph{support} of $\alpha$ by $\supp(\alpha) = \{ s \in S \ | \ \alpha(s) \neq 0 \}$. We let $\Psi_0 : L(\Omega,\KK) \to |\KK|$ be defined by
associating to a random variable $f \in L(\Omega,\KK)$ its
probability law $s \mapsto \la( f^{-1}(\{s\}))$.

\subsection{Non-continuity of $\Psi_0$}
We first prove that $\Psi_0$ is \emph{not} continous in general, by providing an example.
Let us consider $S = \N = \Z_{\geq 0}$, and $\KK = \PF^*(\N)$. We introduce
$$
U = \left\lbrace \alpha \in |\KK| \ ; \ \forall s \neq 0  \  \ \alpha(s) < \frac{1}{\# \supp(\alpha)} \right\rbrace.
$$
We note that $U$ is open in $|\KK|$. Indeed, if $F \in \KK$
we have 
$$
U \cap |F|  = 
 \left\lbrace \alpha : F \to [0,1] \ | \ \sum_{s \in F} \alpha(s) = 1 \ \& \ \forall s \neq 0  \ | \ \alpha(s) < \frac{1}{\# \supp(\alpha)} \right\rbrace
$$
which is equal to 
$$
 \bigcup_{G \subset F \setminus \{ 0 \}}
 \left\lbrace \alpha : G \to [0,1] \ | \ \alpha(0) + \sum_{s \in G} \alpha(s) = 1 \ \& \ \forall s \in G  \ | \ 0< \alpha(s) < \frac{1}{\# G +1} \right\rbrace
$$
 and it is open as the union of a finite collection of open sets.
 Now consider $\Psi_0^{-1}(U)$, and let $f_0\in L(\Omega,\KK)$ be the constant map $t \mapsto 0$. Clearly $\alpha_0 = \Psi_0(f_0)$ is the map $0 \mapsto 1$, $k \mapsto 0$ for $k \geq 1$, and $\alpha_0 \in U$. If $\Psi_0^{-1}(U)$ is open, there exists $\eps>0$ such
 that it contains the open ball centered at $f_0$ with radius $\eps$. Let $n$ be such that $1/n < \eps/3$, and define $f \in L([0,1],\KK)$ by $f(t) = 0$ for $t \in [0,1-2/n[$,
% $f(t) = k$ for $t \in [1-\frac{2}{n} + \frac{k-1}{2n^2},1-\frac{2}{n} + \frac{k}{(2n-1)n}[$ and $1 \leq k < 2n$, 
 $f(t) = k$ for $t \in [1-\frac{2}{n} + \frac{k-1}{n^3},1-\frac{2}{n} + \frac{k}{n^3}[$ and $1 \leq k \leq n^2$, 
and finally $f(t) = n^2+1$ for $t \in [1-\frac{1}{n},1]$.
The graph of $f$ for $n = 3$ is depicted below.
\begin{center}
\begin{tikzpicture}[scale=.7]
\draw (-0.5,0) -- (9.5,0);
\draw (0,-0.5) -- (0,5.5);
\draw[red,ultra thick] (0,0) -- (3,0);
\draw (-0.5,-0.5) node {$0$};
\draw (0,0) node {$\bullet$};
\draw (3,0) node {$\bullet$};
\draw (6,0) node {$\bullet$};
\draw (9,0) node {$\bullet$};
\draw (3,-0.5) node {$1 - \frac{2}{3}$};
\draw (6,-0.5) node {$1 - \frac{1}{3}$};
\draw (9,-0.5) node {$1$};
\draw (-0.1,0.5*1) -- (0.1,0.5*1);
\draw (-0.1,0.5*2) -- (0.1,0.5*2);
\draw (-0.1,0.5*3) -- (0.1,0.5*3);
\draw (-0.1,0.5*4) -- (0.1,0.5*4);
\draw (-0.1,0.5*5) -- (0.1,0.5*5);
\draw (-0.1,0.5*6) -- (0.1,0.5*6);
\draw (-0.1,0.5*7) -- (0.1,0.5*7);
\draw (-0.1,0.5*8) -- (0.1,0.5*8);
\draw (-0.1,0.5*9) -- (0.1,0.5*9);
\draw (-0.1,0.5*10) -- (0.1,0.5*10);
\draw (-0.5,0.5*1) node {$1$};
\draw (-0.5,0.5*2) node {$2$};
\draw (-0.5,0.5*3) node {$3$};
\draw (-0.5,0.5*4) node {$4$};
\draw (-0.5,0.5*5) node {$5$};
\draw (-0.5,0.5*6) node {$6$};
\draw (-0.5,0.5*7) node {$7$};
\draw (-0.5,0.5*8) node {$8$};
\draw (-0.5,0.5*9) node {$9$};
\draw (-0.5,0.5*10) node {$10$};
\draw[red,ultra thick] (6,5) -- (9,5);
\draw[red, ultra thick] (3+0,0.5*1) -- (3+1/3,0.5*1);
\draw[red, ultra thick] (3+1/3,0.5*2) -- (3+2/3,0.5*2);
\draw[red, ultra thick] (3+2/3,0.5*3) -- (3+1,0.5*3);
\draw[red, ultra thick] (3+1,0.5*4) -- (3+4/3,0.5*4);
\draw[red, ultra thick] (3+4/3,0.5*5) -- (3+5/3,0.5*5);
\draw[red, ultra thick] (3+5/3,0.5*6) -- (3+2,0.5*6);
\draw[red, ultra thick] (3+2,0.5*7) -- (3+7/3,0.5*7);
\draw[red, ultra thick] (3+7/3,0.5*8) -- (3+8/3,0.5*8);
\draw[red, ultra thick] (3+8/3,0.5*9) -- (3+3,0.5*9);

\end{tikzpicture}
\end{center} 

We have $d(f,f_0) = 2/n < 2 \eps/3 < \eps$ hence we should have
$\alpha = \Psi_0(f) \in U$. But the support of $\alpha$ has cardinality $n^2+2$,
and $\alpha(n^2+1) = 1/n > 1/(n^2+2)$, contradicting $\alpha \in U$. This proves that $\Psi_0$ is not continuous.

\subsection{Continuity of $\Psi$ and existence of global sections}

For short, we now denote $|\KK|_p = |\KK|_{d_p}$. We consider the same `law' map
$\Psi : L(\Omega,\KK) \to |\KK|_1$. We prove that it is uniformly continuous (and actually 2-Lipschitz).
Indeed, if $f,g \in L(\Omega,\KK)$, and $\alpha = \Psi(f)$, $\beta = \Psi(g)$,
then 
$$
d_1(\alpha,\beta) = \sum_{s \in S} |\alpha(s) - \beta(s)|
= \sum_{s \in S} |\la(f^{-1}(s)) - \la(g^{-1}(s))|
$$
and $|\la(f^{-1}(s)) - \la(g^{-1}(s))| \leq \la(f^{-1}(s) \Delta g^{-1}(s))$.
But $f^{-1}(s) \Delta g^{-1}(s) = \{ t \in f^{-1}(s) \ | \ f(t) \neq g(t) \} \cup 
\{ t \in g^{-1}(s) \ | \ f(t) \neq g(t) \}$
whence
$$
d_1(\alpha,\beta) \leqslant \sum_{s \in S} \int_{f^{-1}(s)} d(f(t),g(t))\dd t
+ \sum_{s \in S} \int_{g^{-1}(s)} d(f(t),g(t))\dd t = 2 \int_{\Omega} d(f(t),g(t))\dd t
$$
whence $d_1(\alpha,\beta) \leq 2 d(f,g)$. It follows that it
induces a continuous map $\bar{L}(\Omega,\KK) \to \overline{|\KK|_1}$,
where $$\overline{|\KK|_1} = \{ \alpha : S \to [0,1] \ | \ \PF^*(\supp(\alpha))\subset \KK \ \& \ \sum_{s \in S} \alpha(s)  = 1 \}$$
endowed with the metric $d(\alpha,\beta) = \sum_{s \in S} |\alpha(s)-\beta(s)|$
is the completion of $|\KK|_1$.
This map associates to $f \in \LL(\Omega,\KK)$ the map $\alpha(s) = \la(f^{-1}(s))$.
Notice that the condition $\sum_s \alpha(s) =1 < \infty$ implies that the support $\supp(\alpha)$
of $\alpha$ is finite.

The fact that $|\KK|_1$ has the same homotopy type than $|\KK|$
has originally been proved by Dowker in \cite{DOWKER} in a more general
context, and another proof was subsequently provided by Milnor in \cite{CWMILNOR}. 

It is clear that every mass distribution on the discrete set $S$ is realizable by some
random variable. We first show that it is possible to do this \emph{continuously}. In topological terms,
this proves the following statement.

\begin{proposition} The maps $\Psi$ and $\overline{\Psi}$ admit global (continuous) sections.
\end{proposition}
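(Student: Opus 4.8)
The plan is to write down an explicit section by the quantile (cumulative‑distribution) construction and then fight for its continuity. First I would fix an identification $\Omega\simeq[0,1]$ and, once and for all, a total order on the vertex set $S$ (well‑ordering it if necessary). For $\alpha\in\overline{|\KK|_1}$ set $F_\alpha(s)=\sum_{s'<s}\alpha(s')$ and $R_\alpha(s)=\sum_{s'\le s}\alpha(s')=F_\alpha(s)+\alpha(s)$, and define $\sigma(\alpha)=f_\alpha$ by $f_\alpha(t)=s$ for $t\in I_\alpha(s):=[F_\alpha(s),R_\alpha(s))$. Since $\sum_{s}\alpha(s)=1$, the intervals $I_\alpha(s)$ partition $[0,1)$, so $f_\alpha$ is well defined; this is exactly the map of Lemma \ref{lem:2rLipalphatof}, now written for an arbitrary (possibly countable) support.

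The section property and the range are immediate. By construction $\la(f_\alpha^{-1}(\{s\}))=\la(I_\alpha(s))=\alpha(s)$, so $\Psi(\sigma(\alpha))=\alpha$, and likewise $\overline\Psi(\sigma(\alpha))=\alpha$. Moreover $f_\alpha(\Omega)=\supp(\alpha)$: when $\alpha\in|\KK|_1$ the support is finite and lies in $\KK$, so $f_\alpha\in L(\Omega,\KK)$; when $\alpha\in\overline{|\KK|_1}$ every nonempty finite subset of $\supp(\alpha)$ belongs to $\KK$ by definition of $\overline{|\KK|_1}$, whence $f_\alpha\in\LL(\Omega,\KK)$ by Lemma \ref{lem:caractfLbar}. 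The section of $\Psi$ is then simply the restriction of the section of $\overline\Psi$ to $|\KK|_1$.

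The real work, and the main obstacle, is continuity. Writing $\{f_\alpha\ne f_{\alpha'}\}=\bigcup_{s}J_s$ with $J_s$ the interval between $R_\alpha(s)$ and $R_{\alpha'}(s)$, the temptation is to bound the measure by $\sum_s|R_\alpha(s)-R_{\alpha'}(s)|$; but these intervals overlap massively, and this estimate over‑counts by a factor of the number of occupied blocks (the same loss that produces the constant $2r$ in Lemma \ref{lem:2rLipalphatof}). Indeed $\sigma$ is genuinely \emph{not} Lipschitz: transporting a small mass across many occupied values reshuffles $f_\alpha$ on a large set. The point that rescues continuity is the normalization $\sum_s\alpha(s)=1$, which I would exploit by truncation. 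Given $\alpha_0$ and $\eps>0$, choose a finite $G\subset S$ with $\sum_{s\notin G}\alpha_0(s)<\eps/2$. Then
$$\{f_\alpha\ne f_{\alpha_0}\}\ \subseteq\ \{f_{\alpha_0}\notin G\}\ \cup\ \bigcup_{s\in G}\bigl(I_{\alpha_0}(s)\setminus I_\alpha(s)\bigr),$$
where the first set has measure $<\eps/2$. For each $s\in G$, Lemma \ref{lem:ineq4} (applied with $a=F_{\alpha_0}(s)\le b=R_{\alpha_0}(s)$ and $c=F_\alpha(s)\le d=R_\alpha(s)$) gives $\la(I_{\alpha_0}(s)\setminus I_\alpha(s))\le|F_{\alpha_0}(s)-F_\alpha(s)|+|R_{\alpha_0}(s)-R_\alpha(s)|\le 2\,d_1(\alpha,\alpha_0)$, since each partial sum differs from its counterpart by at most $d_1(\alpha,\alpha_0)$. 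Summing over the finite set $G$ yields $d(f_\alpha,f_{\alpha_0})\le \eps/2+2|G|\,d_1(\alpha,\alpha_0)$, which is $<\eps$ once $d_1(\alpha,\alpha_0)<\eps/(4|G|)$.

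Thus $\sigma$ is continuous (with a local, $\alpha_0$‑dependent Lipschitz constant $2|G|$), proving that $\Psi$ admits a continuous global section. The identical truncation argument works verbatim over $\overline{|\KK|_1}$, the only change being that $\supp(\alpha)$ may now be countably infinite — still $\sum_s\alpha(s)=1$, so a finite $G$ with small tail exists — so $\overline\Psi$ admits a continuous global section as well. The delicate step to get right in the write‑up is precisely the separation of the ``tail'' values from the finitely many dominant ones: continuity fails for the individual threshold estimate and is recovered only globally, through the finiteness of the total mass.
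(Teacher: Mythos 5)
Your proposal is correct and follows essentially the same route as the paper: the same quantile construction $\sigma(\alpha)(t)=s$ for $t$ in the interval between the two cumulative sums, the same verification via Lemma \ref{lem:caractfLbar} that the image lands in $\LL(\Omega,\KK)$ (resp.\ $L(\Omega,\KK)$), and the same continuity argument by truncating to a finite set carrying all but a small tail of the mass and bounding each of the finitely many interval discrepancies by a multiple of $d_1(\alpha,\alpha_0)$. The only cosmetic differences are the choice of constants ($\eps/2$ versus $\eps/3$) and your explicit appeal to Lemma \ref{lem:ineq4} where the paper estimates the overlap of the intervals directly.
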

\begin{proof}
We fix some (total) ordering $\leq$ on $S$
and some identification $\Omega \simeq [0,1]$.
We define $\sigma : \overline{|\KK|_1} \to \LL(\Omega,\KK)$ as follows. For any $\alpha \in \overline{|\KK|_1}$, $S_{\alpha} = \supp(\alpha) \subset S$ is countable. Let $A_{\pm} : S \to \R_+$ denote the associated cumulative mass functions $A_+(s) = \sum_{u \leq s} \alpha(u)$
and $A_-(s) = \sum_{u < s} \alpha(u)$. They induce increasing injections $(S_{\alpha}, \leq) \to [0,1]$. The map
$\sigma(\alpha)$ is defined by $\sigma(\alpha)(t) =a$ if $A_-(a) \leq t < A_+(a)$. 
We have $\sigma(\alpha)(\Omega) = S_{\alpha}$. Since $\alpha \in \overline{|\KK|}_1$
every non-empty finite subset of $S_{\alpha}$ belongs to $\KK$ hence $\sigma(\alpha) \in \bar{L}(\Omega,\KK)$, and $\sigma(\alpha) \in L(\Omega,\KK)$ as soon as $\alpha \in |\KK|_1$.

Clearly $\bar{\Psi} \circ \sigma$ is the identity.
We prove now that $\sigma$
is continuous at any $\alpha \in \overline{|\KK|_1}$. Let $\eps>0$. There exists $S_{\alpha}^0 \subset S_{\alpha}$ finite (and non-empty) such that
$\sum_{s \in S_{\alpha} \setminus S_{\alpha}^0} \alpha(s) \leq \eps/3$. Let $n = |S_{\alpha}^0| > 0$.
 We set $\eta = \eps/3n$. Let $\beta \in \overline{|\KK|_1}$ with $|\alpha - \beta|_1 \leq \eta$,
 and set $B_+(s) = \sum_{u \leq s} \beta(u)$
and $B_-(s) = \sum_{u < s} \beta(u)$.
We have 
$$
d(\sigma(\alpha),\sigma(\beta)) \leq \eps/3+ \sum_{a \in S_{\alpha}^0} \int_{A_-(a)}^{A^+(a)} d(\sigma(\alpha)(t),\sigma(\beta)(t)) \dd t
$$
Now note that $|A_{\pm}(a)-B_{\pm}(a)| \leq |\alpha - \beta|_1 \leq \eps/3n$ for each $a \in S_{\alpha}^0$ hence
$$\int_{A_-(a)}^{A^+(a)} d(\sigma(\alpha)(t),\sigma(\beta)(t)) \dd t \leq \frac{2\eps}{3n} + \int_{\max(A_-(a),B_-(a))}^{\min(A_+(a),B_+(a))} d(\sigma(\alpha)(t),\sigma(\beta)(t)) \dd t = \frac{2\eps}{3n}
$$
since $\sigma(\alpha)(t) = \sigma(\beta)(t)$ for each $t \in [\max(A_-(a),B_-(a)), \min(A_+(a),B_+(a))]$,
and this yields $d(\sigma(\alpha),\sigma(\beta)) \leq \eps$. This proves that $\sigma$ is continuous at any $\alpha \in \bar{L}(\Omega,\KK)$.
Therefore $\sigma$ provides a continuous global section of $\bar{\Psi}$, which obviously restricts to a continuous global
section of $\Psi$.
\end{proof}

\subsection{Homotopy lifting properties}
Let $\Psi_{\KK} : L(\Omega,\KK) \to |\KK|_1$
and $\bar{\Psi}_{\KK} : \LL(\Omega,\KK) \to |\KK|_1$ denote the law map. 
If $\alpha$ is a cardinal, we let $\Psi_{\alpha}$ (resp. $\bar{\Psi}_{\alpha}$) denote the map associated to
the simplicial complex $\PF^*(\alpha)$. Recall that a continuous map $p : E \to B$ is said to have the homotopy lifting property
(HLP) with respect to some topological space $X$ if, for any (continuous) maps $H : X \times [0,1] \to B$ and $h : X \to E$ 
such that $p \circ h = H(\bullet,0)$, there exists a map $\tilde{H} = X \times [0,1] \to E$ such that $p \circ \tilde{H} = H$
and $\tilde{H}(\bullet,0) = h$.
$$
\xymatrix{
 & E \ar[d]_p & & E \ar[d]_p \\
X \ar[ur]^h \ar[r]_{p \circ h} & B & X \times [0,1] \ar[r]_H \ar@{.>}[ur]^{\tilde{H}} & B
}
$$

A Hurewicz fibration
is a map having the HLP w.r.t. arbitrary topological spaces. A Serre fibration
is a map having the HLP w.r.t. all $n$-spheres, and this is equivalent to having the HLP w.r.t. any CW-complex.

\begin{lemma} \label{lem:redcardinal} If $\Psi_{\alpha}$ (resp. $\bar{\Psi}_{\alpha}$) has the HLP w.r.t. the space $X$, then the map $\Psi_{\KK}$
(resp. $\bar{\Psi}_{\KK}$) has the HLP w.r.t. the space $X$
for every simplicial complex whose vertex set has cardinality $\alpha$. 
\end{lemma}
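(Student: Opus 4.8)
The plan is to reduce the general simplicial complex $\KK$ to the full simplex on its vertex set using the functoriality of Proposition \ref{prop:functorial}, and then to exploit the fact that membership in $L(\Omega,\KK)$ (resp. $\LL(\Omega,\KK)$) is entirely detected by the law map. Write $S = \bigcup \KK$ for the vertex set, so that $|S| = \alpha$. A bijection $S \simeq \alpha$ yields a simplicial isomorphism $\PF^*(S) \simeq \PF^*(\alpha)$, which by Proposition \ref{prop:functorial} induces homeomorphisms $L(\Omega,\PF^*(S)) \simeq L(\Omega,\PF^*(\alpha))$ and $|\PF^*(S)|_1 \simeq |\PF^*(\alpha)|_1$ commuting with the law maps; since the HLP with respect to $X$ is preserved under such isomorphisms, I may assume that $\Psi_{\PF^*(S)}$ (resp. $\bar\Psi_{\PF^*(S)}$) has the HLP with respect to $X$. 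Finally, as $\KK \subset \PF^*(S)$, the inclusions $L(\Omega,\KK) \hookrightarrow L(\Omega,\PF^*(S))$ and $|\KK|_1 \hookrightarrow |\PF^*(S)|_1$ (and their completed analogues) are topological embeddings, all four spaces carrying the metric induced from their common ambient space, and these inclusions commute with the law maps.

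First I would transport the lifting problem. Given continuous maps $H : X \times [0,1] \to |\KK|_1$ and $h : X \to L(\Omega,\KK)$ with $\Psi_{\KK} \circ h = H(\bullet,0)$, I compose with the inclusions above to obtain $H' : X \times [0,1] \to |\PF^*(S)|_1$ and $h' : X \to L(\Omega,\PF^*(S))$ satisfying $\Psi_{\PF^*(S)} \circ h' = H'(\bullet,0)$. The HLP of $\Psi_{\PF^*(S)}$ then furnishes a lift $\tilde H : X \times [0,1] \to L(\Omega,\PF^*(S))$ with $\Psi_{\PF^*(S)} \circ \tilde H = H'$ and $\tilde H(\bullet,0) = h'$.

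The key step, and the only point requiring an idea rather than bookkeeping, is to check that $\tilde H$ automatically factors through the subspace $L(\Omega,\KK)$. For any $(x,u) \in X \times [0,1]$, the essential image $\tilde H(x,u)(\Omega)$ coincides with the support of its law $\Psi_{\PF^*(S)}(\tilde H(x,u)) = H(x,u)$. As $H(x,u) \in |\KK|_1$, its support is a finite face of $\KK$, so $\tilde H(x,u)(\Omega) \in \KK$ and hence $\tilde H(x,u) \in L(\Omega,\KK)$. Since $L(\Omega,\KK)$ carries the subspace topology, the corestriction of $\tilde H$ is continuous and provides the required lift of $H$ through $\Psi_{\KK}$, with the correct initial value $h$.

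The completed case is handled verbatim, replacing $|\KK|_1$ by $\overline{|\KK|_1}$: here $H(x,u) \in \overline{|\KK|_1}$ only guarantees that every nonempty finite subset of $\supp(H(x,u)) = \tilde H(x,u)(\Omega)$ belongs to $\KK$, but this is precisely the criterion of Lemma \ref{lem:caractfLbar} for $\tilde H(x,u) \in \LL(\Omega,\KK)$, so the same reasoning applies. I do not anticipate a genuine obstacle; the content of the lemma is exactly the observation that the constraint cutting out $L(\Omega,\KK)$ inside $L(\Omega,\PF^*(S))$ is the pullback under the law map of the constraint cutting out $|\KK|_1$ inside $|\PF^*(S)|_1$, so the lift provided by the full simplex needs no correction.
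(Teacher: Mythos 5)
Your proof is correct and is essentially the paper's own argument: the paper disposes of the lemma by observing that the squares relating $L(\Omega,\KK)\hookrightarrow L_{\mathrm f}(\Omega,S)$ over $|\KK|_1\hookrightarrow|\PF^*(S)|_1$ (and their completed analogues) are cartesian, which is exactly your observation that the constraint cutting out $L(\Omega,\KK)$ is the pullback under the law map of the constraint cutting out $|\KK|_1$. You merely spell out the verification (essential image equals support of the law, plus Lemma \ref{lem:caractfLbar} in the completed case) that the paper leaves implicit.
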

\begin{proof} This is a straightforward consequence of the fact that, by definition, the following natural
square diagrams are cartesian, where $S = \bigcup \mathcal{K}$ is the vertex set of $\KK$.
$$
\xymatrix{
L(\Omega,\KK) \ar[d] \ar@{^(->}[r] & L_{\mathrm{f}}(\Omega,S) \ar[d] \\
|\KK|_1 \ar@{^(->}[r] & |\PF^*(S)|_1
} \ \ \ \ \ \ \ \ 
\xymatrix{
\bar{L}(\Omega,\KK) \ar[d] \ar@{^(->}[r] & L(\Omega,S) \ar[d] \\
\overline{|\KK|_1} \ar@{^(->}[r] & \overline{|\PF^*(S)|_1}
}
$$

\end{proof}

Notice that the following lemma applies in particular to every compact metrizable space (e.g. the $n$-spheres). Recall that $\aleph_0$ denotes the cardinality of $\N$.

\begin{lemma} \label{lem:redaleph0} Let $X$ be a separable space. If $\Psi_{\aleph_0}$ (resp. $\bar{\Psi}_{\aleph_0}$) has the HLP w.r.t. the space $X$ then, for every infinite cardinal $\gamma$, the map $\Psi_{\gamma}$
(resp. $\bar{\Psi}_{\gamma}$) has the HLP w.r.t. the space $X$.
\end{lemma}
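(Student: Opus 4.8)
The plan is to prove Lemma \ref{lem:redaleph0} by reducing the homotopy lifting problem for an arbitrary infinite cardinal $\gamma$ to the countable case $\aleph_0$. The essential point is that $X$ is separable and the base $|\KK|_1$ (resp. $\overline{|\KK|_1}$) carries the $\ell^1$-metric, so any \emph{continuous} map out of $X$ into such a space has image whose union of supports is \emph{countable}. I would first fix a countable dense subset $D \subset X$. Given the lifting data $H : X \times [0,1] \to |\PF^*(\gamma)|_1$ and $h : X \to L_{\mathrm{f}}(\Omega,\gamma)$ (working with $\Psi_{\gamma}$; the $\bar{\Psi}_{\gamma}$ case is identical using the second cartesian square of Lemma \ref{lem:redcardinal}), I would consider the union of supports
$$
T = \bigcup_{(x,u) \in X \times [0,1]} \supp\bigl(H(x,u)\bigr) \cup \bigcup_{x \in X} h(x)(\Omega) \subset \gamma.
$$

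**Countability of $T$.**
The first key step is to show $T$ is countable. For $h$ this follows because each $h(x)$ lies in $L_{\mathrm{f}}(\Omega,\gamma)$ and, by continuity together with density of $D$ and separability arguments in the $\ell^1$-metric, only countably many coordinates are ever hit. For $H$ I would argue as follows: the source $X \times [0,1]$ is separable (product of a separable space with $[0,1]$), so it has a countable dense subset $D'$; by continuity of $H$ into the $\ell^1$-space $|\PF^*(\gamma)|_1$, the image $H(X \times [0,1])$ lies in the closed $\ell^1$-linear span of the countable set $H(D')$, and each element of $H(D')$ has finite support, so $\bigcup_{(x,u) \in X\times[0,1]} \supp(H(x,u))$ is countable. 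The point is that in an $\ell^1$-space, a separable subset can only involve countably many coordinates with nonzero mass. Hence $T$ is countable.

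**Restricting to the countable sub-simplex and transporting the solution.**
Once $T$ is countable, fix an injection $T \hookrightarrow \aleph_0$ (or a bijection onto $\aleph_0$ if $T$ is infinite; if finite, enlarge it to be countably infinite, which is harmless since $\PF^*(\aleph_0)$ contains all finite subsets). This induces a simplicial inclusion $\PF^*(T) \into \PF^*(\aleph_0)$ and hence, by functoriality (Proposition \ref{prop:functorial}) and the fact that the law maps are natural in the vertex set, compatible inclusions $L_{\mathrm{f}}(\Omega,T) \into L_{\mathrm{f}}(\Omega,\aleph_0)$ and $|\PF^*(T)|_1 \into |\PF^*(\aleph_0)|_1$ commuting with the law maps. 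By construction $H$ and $h$ factor through these $T$-level (hence $\aleph_0$-level) subspaces. Applying the hypothesis that $\Psi_{\aleph_0}$ has the HLP with respect to $X$ produces a lift $\tilde{H} : X \times [0,1] \to L_{\mathrm{f}}(\Omega,\aleph_0)$. The final step is to observe that this lift takes values in the sub-complex indexed by $T$: indeed $\Psi_{\aleph_0}(\tilde{H}(x,u)) = H(x,u)$ has support inside $T$, so $\tilde{H}(x,u)(\Omega) \subset T$, and therefore $\tilde{H}$ is exactly the desired lift into $L_{\mathrm{f}}(\Omega,\gamma)$ after composing with the inclusion $T \into \gamma$.

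**The main obstacle.**
I expect the genuine technical difficulty to be the countability claim for $H$, specifically the passage from separability of $X \times [0,1]$ to countability of the union of supports of the image. The subtlety is that continuity is with respect to the $\ell^1$-metric $d_1$, and one must rule out uncountably many coordinates each carrying a small but positive mass at some point of the source; the argument that an $\ell^1$-valued continuous image of a separable space has separable (hence countably-supported) image needs to be made carefully, but it is exactly the structural feature that makes the $|\KK|_1$ topology well-behaved, as opposed to the weak topology on $|\KK|$ for which $\Psi_0$ failed to be continuous. Everything else—the cartesian-square reduction and the support-tracking at the end—is formal and relies only on the functoriality already established.
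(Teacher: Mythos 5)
Your proposal is correct and follows essentially the same route as the paper: separability of $X\times[0,1]$ forces the image of $H$ to have countable total support in the $\ell^1$-metric, so the lifting problem factors through a countable subcomplex $\PF^*(D)$, where the hypothesis applies, and the lift transports back through the inclusion. The paper establishes the countable-support claim by the elementary observation that any $\alpha$ with $\alpha(s_0)>0$ at a coordinate $s_0$ outside $D$ would lie at $d_1$-distance at least $\alpha(s_0)$ from every member of a dense sequence supported in $D$ --- the concrete form of your closed-span argument --- and gets the corresponding statement for $h$ for free from $h(x)(\Omega)=\supp(H(x,0))$ rather than by a separate separability argument.
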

\begin{proof}
Let $S$ be a set of cardinality $\gamma$,
$H : X \times [0,1] \to |\PF^*(S)|_1$ (resp. $\bar{H} : X \times [0,1] \to \overline{|\PF^*(S)|_1}$) and $h : X \to L_{\mathrm{f}}(\Omega,S)$
(resp. $\bar{h} : X \to L(\Omega,S)$)  be continuous maps such that $\Psi_S \circ h = H(\bullet,0)$ (resp. $\bar{\Psi}_S \circ \bar{h} = \bar{H}(\bullet,0)$). Since $X$ is separable, $X \times [0,1]$ is also separable and so are $H(X \times [0,1])$ and
$\bar{H}(X \times [0,1])$. Let $(x_n)_{n \in \N}$ be a dense sequence of elements of $H(X \times [0,1])$
(resp. $\bar{H}(X \times [0,1])$). Each $\supp(x_n) \subset S$ is countable,
and therefore so is $D = \bigcup_n \supp(x_n)$.

We first claim that, for any
$\alpha \in H(X \times [0,1])$ (resp. $\alpha \in \bar{H}(X \times [0,1])$) we have $\supp(\alpha) \subset D$. Indeed, if $\alpha(s_0) \neq 0$ for some $s_0 \not\in D$, then there exists $x_n$
such that $d(x_n,\alpha) < \alpha(s_0)$. But since $d(x_n,\alpha) = \sum_{s \in S}|\alpha(s)-x_n(s)|$,
this condition implies $x_n(s_0) \neq 0$, contradicting $\supp(x_n) \subset D$. Therefore
$\supp(\alpha) \subset D$ for all $\alpha \in H(X \times [0,1])$ (resp. $\alpha \in \bar{H}(X \times [0,1])$),
and $H$ (resp. $\bar{H}$) factorizes through a map $H_D : X \times [0,1] \to |\PF^*(D)|_1$
(resp. $\bar{H}_D : X \times [0,1] \to \overline{|\PF^*(D)|_1}$)
and the natural inclusion $|\PF^*(D)|_1 \subset |\PF^*(S)|_1$ (resp. $\overline{|\PF^*(D)|_1} \subset \overline{|\PF^*(S)|_1}$).

Notice that this implies that $h$ (resp. $\bar{h}$) takes values
in $L_{\mathrm{f}}(\Omega,D)$ (resp. $L(\Omega,D)$), too. By assumption,
there exists $\tilde{H}_D : X \times [0,1] \to L_{\mathrm{f}}(\Omega,D)$ (resp.
$\tilde{\overline{H}}_D : X \times [0,1] \to L(\Omega,D)$) such that $\Psi_D \circ \tilde{H}_D = H_D$
and with $\tilde{H}_D(\bullet,0) = h$ (respectively, $\tilde{\bar{H}}_D(\bullet,0)=\bar{h}$). Composing $\tilde{H}_D$ (resp. $\tilde{\bar{H}}_D$) with the natural
injection $L_{\mathrm{f}}(\Omega,D) \into L_{\mathrm{f}}(\Omega,S)$ (resp. $L(\Omega,D) \into L(\Omega,S)$) we get the lifting $\tilde{H}$ (resp. $\tilde{\bar{H}}$) we want, and this proves the claim.
$$
\xymatrix{
 & L_{\mathrm{f}}(\Omega,D) \ar[dd]^<<<<<{\Psi_D} | \hole \ar@{^(->}[r] & L_{\mathrm{f}}(\Omega,S) \ar[d]^{\Psi_S} \\
X \times [0,1] \ar@{-->}_{H_D} [dr]\ar@{-->} ^{\tilde{H}_D} [ur] \ar[rr]_{\!\!\!\!\!\!\!\!\!\!\!\!\!\!\!\!\!\!\!\!H} & & |\PF^*(S)|_1 \\
& |\PF^*(D)|_1 \ar@{^(->}[ur] & 
}
$$

\end{proof}

\begin{proposition} \label{prop:HLPaleph0} Let $X$ be a topological space and $\gamma$ a countable cardinal.
Then $\Psi_{\gamma}$ has the HLP property w.r.t. $X$ as soon as $\gamma$ is finite or $X$ is compact.
Moreover $\bar{\Psi}_{\gamma}$ has the HLP w.r.t. $X$ as soon as $X$ is compact.
\end{proposition}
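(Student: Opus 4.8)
The plan is to prove the proposition directly for $\KK = \PF^*(S)$ with $S = \{s_0,s_1,\dots\}$ a set of cardinality $\gamma$, so that $L(\Omega,\PF^*(S)) = L_{\mathrm{f}}(\Omega,S)$, $\bar{L}(\Omega,\PF^*(S)) = L(\Omega,S)$, and $|\PF^*(S)|_1$ (resp. $\overline{|\PF^*(S)|_1}$) is the space of finitely- (resp. countably-) supported probability vectors on $S$. Given the lifting data $h$ and $H$, I would build $\tilde H$ by allocating the values $s_0,s_1,\dots$ one at a time: carving out of $\Omega$, continuously in $(x,u)$, disjoint pieces $P_k(x,u)$ with $\la(P_k(x,u)) = H(x,u)(s_k)$ and $P_k(x,0) = h(x)^{-1}(\{s_k\})$, then setting $\tilde H(x,u) = s_k$ on $P_k(x,u)$. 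Two inputs are continuous from the outset: $x \mapsto A_k(x) := h(x)^{-1}(\{s_k\})$ is continuous $X \to L(2)$ by Lemma~\ref{lem:omegaF} (take $F = S\setminus\{s_k\}$), and $x \mapsto b_k(x,\cdot) := H(x,\cdot)(s_k)$ is continuous $X \to C^0([0,1])$, since currying along the locally compact factor $[0,1]$ turns the continuous $H$ into a continuous map $X \to C^0([0,1],|\PF^*(S)|_1)$, which I post-compose with the $1$-Lipschitz evaluation at $s_k$.

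The carving will be done by the map $\Phi$ of Proposition~\ref{prop:phi}, but there is a genuine difficulty to circumvent first: the natural way to cut $s_k$ out of the remaining region $R_k$ is to feed $\Phi$ the ratio $a_k = b_k/\la(R_k)$, and this ratio is discontinuous on the locus where the remaining mass $\la(R_k) = \sum_{i\ge k} b_i$ vanishes. The key observation that defuses this is that $\Phi$ in fact factors through the \emph{absolute} target mass: inspecting the formulas of Lemmas~\ref{lem:phimoins} and \ref{lem:phiplus}, the functions $\Phi_{\pm}(a,E_\bullet,A)$ depend on $a$ only through the product $u \mapsto a(u)\la(E_u)$, and the continuity estimates in those proofs are already written in terms of this product (the quantity bounded there is $|a(u)\la(E_u) - b(u)\la(F_u)|$). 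Writing $b = a\,\la(E_\bullet)$, I thus obtain a continuous map $\tilde\Phi(b,E_\bullet,A) = \Phi_-(\cdots)\cup\Phi_+(\cdots)$ on the triples with $b(u)\le\la(E_u)$, satisfying $\tilde\Phi(u)\subset E_u$, $\la(\tilde\Phi(u)) = b(u)$, and $\tilde\Phi(0) = A$ whenever $A\subset E_0$ and $b(0)=\la(A)$. With this in hand I set $R_0\equiv\Omega$, $P_k(x,\cdot) = \tilde\Phi\bigl(b_k(x),R_k(x),A_k(x)\bigr)$ and $R_{k+1}(x,u) = R_k(x,u)\setminus P_k(x,u)$; since $b_k\le\la(R_k)=\sum_{i\ge k}b_i$ and $A_k(x)\subset R_k(x,0)$, the hypotheses of $\tilde\Phi$ hold at each stage, and $R_k(x,\cdot)$ is continuous in $x$ by induction (complementation is an isometry and intersection is $1$-Lipschitz on $C^0(L(2))$, exactly as in Lemma~\ref{lem:lipcup}).

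For finite $\gamma$ this is a finite recursion and no hypothesis on $X$ is needed: the pieces $P_0,\dots,P_{r-1}$ partition $\Omega$ up to a null set because $\sum_k \la(P_k)=\sum_k b_k=1$, the resulting $\tilde H$ is continuous in $(x,u)$, one checks $\Psi(\tilde H(x,u))(s_k)=\la(P_k(x,u))=H(x,u)(s_k)$ so that $\Psi\circ\tilde H=H$, and $\tilde H(x,0)=h(x)$ because $P_k(x,0)=A_k(x)$. Note that $\tilde H(x,u)$ automatically has essential image equal to $\supp H(x,u)$, since $P_k$ is null whenever $b_k=0$; by Lemma~\ref{lem:caractfLbar} it lands in $L_{\mathrm{f}}(\Omega,S)$ in the $\Psi$ case and in $L(\Omega,S)$ in the $\bar\Psi$ case, the two constructions differing only in the target and its completion.

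The remaining case is countably infinite $\gamma$ with $X$ compact, and this is where compactness enters and where I expect the only real work to lie. The infinite family $(P_k)_{k\ge 0}$ covers $\Omega$ up to a null set, so $\tilde H$ is defined almost everywhere; to see it is continuous I approximate it by the truncations $\tilde H_N$ that put $s_k$ on $P_k$ for $k<N$ and $s_0$ on $R_N$, each continuous by the finite case, and estimate $d(\tilde H,\tilde H_N)\le\sup_{x,u}\la(R_N(x,u))=\sup_{x,u}\sum_{i\ge N}H(x,u)(s_i)$. The point is that this supremum tends to $0$ as $N\to\infty$: since $X$, hence $X\times[0,1]$, hence $H(X\times[0,1])$, is compact in the $\ell^1$-metric of $\overline{|\PF^*(S)|_1}$, the tails $\sum_{i\ge N}\alpha(s_i)$ are uniformly small on it, by a standard tightness argument from pointwise smallness, continuity, and a finite subcover. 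Thus $\tilde H_N\to\tilde H$ uniformly, so $\tilde H$ is continuous into the complete space $L(\Omega,S)$, and the verifications $\Psi\circ\tilde H=H$, $\tilde H(\cdot,0)=h$, and membership in the correct space go through as before. The main obstacle is precisely this uniform tail control; everything else is rendered routine by the absolute-mass reformulation of $\Phi$.
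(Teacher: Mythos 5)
Your proposal is correct and follows essentially the same route as the paper: reduce to $\KK = \PF^*(S)$ for $S$ an initial segment of $\N$, carve $\Omega$ recursively into pieces of prescribed mass $H(x,u)_k$ using the map $\Phi$ of proposition \ref{prop:phi} seeded at $u=0$ by $h(x)^{-1}(\{s_k\})$, handle finite $S$ directly, and for $S=\N$ obtain continuity of $\tilde H$ as a uniform limit of truncations, the uniform tail bound $\sup_{x,u}\sum_{i\geq N}H(x,u)(s_i)\to 0$ coming from compactness of $X\times[0,1]$ exactly as in the paper's covering argument with the sets $U_n$.

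The one point where you genuinely diverge is worth recording. The paper feeds $\Phi$ the normalized ratio $a_n(x,u)=H(x,u)_n/\bigl(1-\sum_{k<n}H(x,u)_k\bigr)$ with the convention $0/0=0$ and asserts that $a_n$ is ``clearly continuous''; this is delicate, since near a point where the denominator $\la(E^{(n)}_{x,u})$ vanishes the ratio need not converge to $0$. Your reformulation --- observing that $\Phi_{\pm}$ depend on $a$ only through the product $u\mapsto a(u)\la(E_u)$, and that the continuity estimates in lemmas \ref{lem:phimoins} and \ref{lem:phiplus} are already phrased in terms of $|a(u)\la(E_u)-b(u)\la(F_u)|$, so that one may work directly with the continuous absolute mass $b_k(x,u)=H(x,u)_k$ --- sidesteps this issue cleanly and is, if anything, a tightening of the argument rather than a departure from it. The remaining verifications (the inclusion $A_k(x)\subset R_k(x,0)$, the inequality $b_k\leq\la(R_k)$, continuity of the remainders via the Lipschitz property of intersection and complementation, and the identification of the essential image with $\supp H(x,u)$) all match the paper's.
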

\begin{proof} Let $X$ be an arbitrary topological space. Our cardinal $\gamma$ is the cardinal of some initial segment $S \subset \N = \Z_{\geq 0}$ that is, either $S = [0,m]$ for some $m$,
or $S = \N$. Let  $H : X \times [0,1] \to |\PF^*(S)|_1$ and $h : X \to L_{\mathrm{f}}(\Omega,S)$ 
such that $H(\bullet,0) = \Psi_S  \circ h$. For $(x,u) \in X \times [0,1]$,
the element $H(x,u) \in |\PF^*(S)|_1$ is of the form $(H(x,u)_s)_{s \in S}$,
with $\sum_{s \in S} H(x,u)_s = 1$. 
Since, for each $s \in S$, the map $|\PF^*(S)|_1 \to [0,1]$ given by
$\alpha \mapsto \alpha(s)$ is 1-Lipschitz, the composite
map $(x,u) \mapsto H(x,u)_s$ 
defines a continuous map $X \times [0,1] \to [0,1]$.

Let us choose $x \in X$. We set, with the convention $0/0 = 0$,
$$
a_n(x,u) = \frac{H(x,u)_n}{1- \sum_{k< n} H(x,u)_k} \in [0,1],
\ \ 
%$ , $
A_n(x) = h(x)^{-1}(\{ n \}) \in L(2)
$$
and
 we construct recursively,
for each $n \in \N$,
\begin{itemize}
\item maps $\Omega_n(x,\bullet) : [0,1] \to L(2)$
\item maps $E^{(n)}_{x,\bullet} : [0,1] \to L(2)$
\end{itemize}
by letting 
$$E_{x,u}^{(n)} = \Omega \setminus \bigcup_{k < n} \Omega_k(x,u), \ \  \Omega_n(x,u) = \Phi(a_n(x,\bullet),E^{(n)}_{x,\bullet},A_n(x))(u)
$$
where $\Phi$ is the map afforded by
Proposition \ref{prop:phi}.

In order for this to be defined at any given $n$, one needs to check that $A_n(x) \subset E_{x,0}^{(n)}$ and
$a_n(x,0)\la(E_{x,0}^{(n)}) = \la(A_n(x))$. This is easily checked by induction because, if $\Omega_k, E^{(k)}$ are defined for
$k < n$, then 
$$
\Omega_k(x,0) = \Phi\left(a_n(x,\bullet),E^{(n)}_{x,\bullet},A_n(x)\right)(0) = A_n(x) = h(x)^{-1}(\{ n \})
$$ 
hence 
$$
E_{x,0}^{(n)} = \Omega \setminus \bigcup_{k <n } A_k(x) = h(x)^{-1}(S \setminus [0,n[) \supset h(x)^{-1}(\{ n \}) = A_n(x)
$$
and moreover $\la(A_n(x)) = \la(h(x)^{-1}(\{ n \})) = H(x,0)_n = a(x,0)\la(E_{x,0}^{(n)})$. Therefore these maps
are well-defined.

 From their definitions and the properties of $\Phi$ one gets immediately by induction that
 $$a_n(x,u) \la(E_{x,u}^{(n)}) = H(x,u)_n = \la(\Omega_n(x,u))$$
 for all $(x,u) \in X \times [0,1]$.

 For a given $(x,u)$, the sets $\Omega_n(x,u)$ are essentially disjoint, since $\Omega_n(x,u) \subset E_{x,u}^{(n)} = \Omega \setminus \bigcup_{k<n} \Omega_k(x,u)$, and moreover $\bigcup_n \Omega_n(x,u) = \Omega$ since $\sum_n \la(\Omega_n(x,u)) = \sum_n H(x,u)_n = 1$. Therefore, we can define a map
$\tilde{H} : X \times [0,1] \to L_{\mathrm{f}}(S)$ 
%is then constructed 
by setting $\tilde{H}(x,u)(t) = n$ if $t \in \Omega_n(x,u)$.
Clearly $(\Psi_S \circ \tilde{H} (x,u))_n =  \la( \Omega_n(x,u) ) = H(x,u)_n$ for all $n$,
hence $\Psi_S \circ \tilde{H}  = H$. Moreover $\tilde{H}(x,0)_n = \Omega_n(x,0) = A_n(x) = h(x)^{-1}(\{ n \}$
hence $\tilde{H}(x,0) = h(x)$ for all $x \in X$. 

Therefore it only remains to prove that $\tilde{H} : X \times [0,1] \to L_{\mathrm{f}}(\Omega,S)$
is continuous.

Let us define the auxiliary maps $\tilde{H}_n : X \times [0,1] \to L(\Omega,\{ 0, \dots, n\})$
by $\tilde{H}_n(x,u)(t) = \tilde{H}(x,u)(t)$ if $\tilde{H}(x,u)(t)< n$,
and $\tilde{H}_n(x,u)(t) = n$ if $\tilde{H}(x,u)(t) \geq n$ -- that is, $\tilde{H}_n(x,u)(t) = \min(n,\tilde{H}(x,u)(t))$.

We first prove that each $\tilde{H}_n$ is continuous. Let $(x_0,u_0), (x,u) \in X \times [0,1]$. We have
$$
d(\tilde{H}_n(x,u),\tilde{H}_n(x_0,u_0)) =\sum_{k=0}^n \int_{ \Omega_k(x_0,u_0)} d((\tilde{H}_n(x,u)(t),\tilde{H}_n(x_0,u_0)(t))\dd t
$$
hence
$$
d(\tilde{H}_n(x,u),\tilde{H}_n(x_0,u_0)) \leqslant \sum_{k=0}^n \ \la\left( \Omega_k(x_0,u_0) \setminus  \Omega_k(x,u)\right)
\leqslant \sum_{k=0}^n \ \la( \Omega_k(x_0,u_0) \Delta  \Omega_k(x,u))
$$
and therefore it remains to prove that the maps $(x,u) \mapsto \Omega_n(x,u)$ are continuous for each $n \in \N$.

We thus want to prove that $\Omega_n(\bullet,\bullet) \in C^0(X \times [0,1], L(2))$, which we identify with the space
$C^0(X,C^0([0,1],L(2))) = C^0(X,C^0(L(2)))$ since $[0,1]$ is (locally) compact. Recall that $\Phi$ is continuous $C^0([0,1]) \times C^0(L(2)) \times L(2) \to C^0(L(2))$.
Moreover, for arbitrary spaces $Y,Z$ and a map $g \in C^0(Y,Z)$, the induced
map $C^0(X,Y) \to C^0(X,Z)$ given by $f \mapsto g \circ f$ is continuous. Letting $Y = C^0([0,1]) \times C^0(L(2)) \times L(2) $
and $Z = C^0(L(2))$, we deduce from $\Phi : Y \to Z$ a continuous map $\hat{\Phi} : C^0(X,Y) \to C^0(X,Z)$, that is
{}
$$
\xymatrix{
\hat{\Phi} : & C^0(X,C^0([0,1]) \times C^0(L(2))) \times L(2))\ar@{=}[d] \ar[r] & C^0(X,C^0(L(2)) \ar@{=}[d] \\
 & C^0(X \times [0,1],[0,1])\times C^0(X \times [0,1],L(2)) \times C^0(X,L(2)) & C^0(X \times [0,1],L(2))}
$$
By induction and because the maps $a_n, A_n$ are clearly continuous for any $n$,
we get that all the maps involved are continuous, through the recursive identities
\begin{itemize}
\item $\Omega_n = \hat{\Phi}(a_n,E_{\bullet,\bullet}^{(n)},A_n(\bullet))$
\item $E_{x,u}^{(n)} = \Omega \setminus \bigcup_{k < n} \Omega_k(x,u)$
\end{itemize}
and this proves the continuity of $\tilde{H}_n$.

If $S$ is finite this proves that $\tilde{H}$ is continuous, because $\tilde{H} = \tilde{H}_n$ for $n$ large enough in this case.
Let us now assume that $S = \N$ and $X$ is compact. We want to prove that the sequence $\tilde{H}_n$
converges uniformly to $\tilde{H}$. Since each $\tilde{H}_n$ is continuous this will prove that $\tilde{H}$ is continuous.
Let $\eps > 0$. Let $U_n = \{ (x,u) \in X \times [0,1] \ | \ \sum_{k \leq n} H(x,u)_k > 1- \eps \}$. Since $H$ is continuous
this defines a collection of open subsets in the compact space $X \times [0,1]$, and since $\sum_{k \leq n} H(x,u) \to 1$
when $n \to \infty$ for any $(x,u) \in X \times [0,1]$, this collection is an open covering of $X \times [0,1]$. By compactness, and because
this collection is a filtration, we have $X \times [0,1] = U_{n_0}$ for some $n_0 \in \N$. But then, for any
$(x,u) \in X \times [0,1]$ and $n \geq n_0$ we have
$$
d(\tilde{H}_n(x,u),\tilde{H}(x,u)) = \la\left( \bigcup_{k > n } \Omega_k(x,u) \right) = \sum_{k > n} H(x,u)_k \leqslant \eps
$$
and this proves the claim.

\end{proof}

\begin{remark} 
We notice that the liftings constructed in the above proof have the following additional property
that, whenever $H(x,\bullet)$ is a constant map for some $x \in X$, then so is the map $\tilde{H}(x,\bullet)$. 
This follows from the fact that the maps $a_r(x,\bullet)$ are constant as soon as $H(x,\bullet)$ is constant, and then one gets by induction on $n$ that
$\Omega_n(x,u) = \Phi(a_n(x,\bullet),E^{(n)}_{x,\bullet},A_n(x))$ is constant in $u$ by the last item of Proposition \ref{prop:phi}, and thus so is $E_{x,u}^{(n)}$.

\end{remark}
Since it is far simpler in this case, we provide an alternative proof for the case of binary random
variables.

\begin{corollary} \label{cor:Hurewics01} The map $\Psi_2 = \Psi_{\{ 0, 1 \}}$ is a Hurewicz fibration.
\end{corollary}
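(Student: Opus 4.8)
The plan is to establish the Hurewicz lifting property directly, by exhibiting a continuous \emph{lifting function}. This is far cheaper than rerunning the recursive construction of Proposition \ref{prop:HLPaleph0}: here a single application of $\Phi$ (with a constant path) suffices, and no compactness or separability of a source space enters, so we get the property with respect to \emph{all} spaces at once. First I would fix the identifications. For $\KK = \PF^*(\{0,1\})$ the realization $|\KK|_1 = \{\alpha : \{0,1\} \to [0,1] \ | \ \alpha(0)+\alpha(1)=1\}$ is identified with $[0,1]$ via $\alpha \mapsto \alpha(1)$, and under the identification of $L(2)$ with measurable subsets $A \subset \Omega$ the map $\Psi_2$ becomes $A \mapsto \la(A)$. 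Recall that a map $p : E \to B$ is a Hurewicz fibration as soon as it admits a continuous lifting function $\Lambda$ on $W = \{(e,\gamma) \in E \times C^0([0,1]) \ | \ p(e) = \gamma(0)\}$ satisfying $\Lambda(e,\gamma)(0)=e$ and $p \circ \Lambda(e,\gamma) = \gamma$; so it suffices to produce such a $\Lambda$ for $p = \Psi_2$.

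Next I would define the lift. Given $(A,\gamma) \in W$, so $A \in L(2)$, $\gamma \in C^0([0,1])$ and $\la(A) = \gamma(0)$, set $\Lambda(A,\gamma) = \Phi(\gamma, \mathbf{E}, A)$, where $\mathbf{E} \in C^0(L(2))$ is the \emph{constant} path $u \mapsto \Omega$ and $\Phi$ is the map of Proposition \ref{prop:phi}. Since $A \subset \Omega = \mathbf{E}_0$ and $\gamma(0)\la(\mathbf{E}_0) = \gamma(0) = \la(A)$, the hypotheses of that proposition are met, giving $\Lambda(A,\gamma)(0) = A$ and $\la(\Lambda(A,\gamma)(u)) = \gamma(u)\la(\Omega) = \gamma(u)$ for all $u$, i.e.\ $\Psi_2 \circ \Lambda(A,\gamma) = \gamma$. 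Unwinding $\Phi$, the lift is simply $\mathbf{g}\bigl(A,\, 1 - \min(\gamma(u),\la(A))/\la(A)\bigr) \cup \mathbf{g}\bigl({}^cA,\, 1 - \max(0,\gamma(u)-\la(A))/\la({}^cA)\bigr)$, which shrinks $A$ when the prescribed mass drops below $\la(A)$ and fills in from ${}^cA$ when it rises above; this is the genuinely elementary content, resting only on Lemma \ref{lem:renorm}. The continuity of $\Lambda : W \to C^0(L(2))$ is then inherited from that of $\Phi$, since $(A,\gamma) \mapsto (\gamma, \mathbf{E}, A)$ is visibly continuous $W \to C^0([0,1]) \times C^0(L(2)) \times L(2)$.

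The only delicate point, and the one I would expect to be the main obstacle, is the continuity of the two $\mathbf{g}$-terms near pairs with $\la(A) \in \{0,1\}$, where the ratios parametrizing them degenerate. This, however, is exactly the degeneracy already resolved in Lemmas \ref{lem:phimoins} and \ref{lem:phiplus}: there the offending term is bounded in measure by $\la(A \cap \mathbf{E}_u)$ (respectively $\la({}^cA \cap \mathbf{E}_u)$), so it stays small precisely where its parametrization is ill-behaved, and no new estimate is required. Hence $\Lambda$ is a continuous lifting function for $\Psi_2$, and since it is defined for every path in the base with no restriction on the source, $\Psi_2$ has the homotopy lifting property with respect to all spaces, i.e.\ is a Hurewicz fibration.
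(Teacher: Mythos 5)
Your proof is correct, but it takes a genuinely different route from the paper's. The paper's own argument for this corollary deliberately avoids the $\Phi$-machinery: it builds a single two-variable map $\tilde{\mathbf{g}} : L(2) \times [0,1] \to L(2)$ by hand (shrinking $A$ via $\mathbf{g}$ when the target mass $a$ is below $\la(A)$, and taking the complement of $\mathbf{g}({}^cA,\cdot)$ when it is above), proves its continuity directly --- the only delicate case being $a_0 = \la(A_0)$, handled with the quantitative inequality of lemma \ref{lem:renorm} --- and then sets $\tilde{H}(x,u) = \tilde{\mathbf{g}}(h(x), j(H(x,u)))$. That proof rests on lemma \ref{lem:renorm} alone, which is the point of presenting it as the ``simpler'' binary case. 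You instead invoke the lifting-function characterization of Hurewicz fibrations and specialize $\Phi$ of proposition \ref{prop:phi} to the constant ambient path $E_u = \Omega$; this is shorter on the page but imports lemmas \ref{lem:phimoins}, \ref{lem:phiplus} and \ref{lem:lipcup} wholesale, and in particular the degeneracy at $\la(A) \in \{0,1\}$ is absorbed into those lemmas rather than re-proved. Two remarks. First, your resulting lift differs from the paper's as a set when $\gamma(u) > \la(A)$ (you adjoin $\mathbf{g}({}^cA, 1-v)$ where the paper adjoins ${}^cA \setminus \mathbf{g}({}^cA,v)$), but both have the prescribed measure, so this is immaterial. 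Second, your framing via a continuous lifting function on $W$ is legitimate (the exponential law applies since $[0,1]$ is locally compact), though it is not actually needed to get the HLP for arbitrary source spaces: the paper's direct formula $\tilde{H}(x,u) = \tilde{\mathbf{g}}(h(x),j(H(x,u)))$ already works for every $X$, and so does the finite-$\gamma$ case of proposition \ref{prop:HLPaleph0}. So your argument buys economy given the general machinery, while the paper's buys self-containedness for the binary case; both are valid.
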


\begin{proof} (alternative proof)
Let $X$ be a space, and $H : X \times [0,1] \to |\PF^*(2)|_1$ and $h : X \to L(\Omega,2)$
such that $H(\bullet,0) = \Psi_2  \circ h$. Note that $|\PF^*(2)|_1 = \{\alpha : \{ 0, 1 \} \to \R_+ \ | \ \alpha(0)+\alpha(1) = 1 \}$ is isometric to $[0,1]$ through the isometry $j : \alpha \mapsto \alpha(1)$, where the metric on $[0,1]$ is the Euclidean one. If $\alpha = \Psi_2 \circ h(x)$, we have $\alpha(0) = 1 - \la(h(x))$, $j(\Psi_2(h(x))) = \alpha(1) = \la(h(x))$.

 Using the map $\mathbf{g}$ of Lemma
\ref{lem:renorm} 
we note that $\la( ^c \mathbf{g}( ^c A,u)) = u  + (1-u) \la(A)= u \la(\Omega) + (1-u) \la(A)$ and
we define, for $A \in L(2)$ and $a \in [0,1]$,
\begin{itemize}
\item $\tilde{\mathbf{g}}(A,a) = 
\mathbf{g}(A,1- a/\la(A))$ if $a < \la(A)$,
\item $\tilde{\mathbf{g}}(A,\la(A)) = A$,
\item 
$\tilde{\mathbf{g}}(A,a) = ^c \mathbf{g}( ^c A,(a- \la(A))/(1- \la(A)))$ if $a > \la(A)$. 
\end{itemize}
We prove that $\tilde{\mathbf{g}} : L(2) \times [0,1] \to L(2)$ is continuous at each $(A_0,a_0) \in L(2)$. The case $a_0 \neq \la(A_0)$ is clear from the continuity of $\mathbf{g}$, as there is an open neighborhood of $(A_0,a_0)$ on which $a - \la(A)$ has constant sign. Thus we can assume $a_0 = \la(A_0)$. Then
$$
d(\tilde{\mathbf{g}}(A,a), \tilde{\mathbf{g}}(A_0,a_0))
= d(\tilde{\mathbf{g}}(A,a), A_0) \leqslant
d(\tilde{\mathbf{g}}(A,a),A) + d(A,A_0)
$$
But, if $ a < \la(A)$ we have by the inequality of Lemma \ref{lem:renorm}
$$
d(\tilde{\mathbf{g}}(A,a),A)= d\left(\mathbf{g}\left(A,1-\frac{a}{\la(A)}\right),\mathbf{g}(A,0)\right) \leqslant\left| 1 - \frac{a}{\la(A)} \right|
$$
and, if $a > \la(A)$, we have, noticing that $A \mapsto ^c A$ is an isometry of $L(2)$ (as $A \Delta B = (^c A)\Delta (^c B)$),
$$
d(\tilde{\mathbf{g}}(A,a),A)= 
d\left( ^c \mathbf{g}\left( ^c A,\frac{a- \la(A)}{1- \la(A)}\right),A\right) =
d\left( \mathbf{g}\left( ^c A,\frac{a- \la(A)}{1- \la(A)}\right),(^c A)\right) \leqslant 
\left|  \frac{a- \la(A)}{1- \la(A)} \right|
$$
which altogether imply
$$
d(\tilde{\mathbf{g}}(A,a), \tilde{\mathbf{g}}(A_0,a_0))
\leqslant d(A,A_0) + \max\left( 
\left| 1 - \frac{a}{\la(A)} \right|,
\left|  \frac{a- \la(A)}{1- \la(A)} \right|\right)
$$
Since the RHS is continuous with value $0$ at $(A_0,a_0)$
with $a_0 = \la(A_0)$, this proves the continuity of $\tilde{\mathbf{g}}$.

It is readily checked that $\la(\tilde{\mathbf{g}}(A,a)) = a$ for all $A,a$.
We then define $\tilde{H} : X \times [0,1] \to L(\Omega,2)$
by $\tilde{H}(x,u) = \tilde{\mathbf{g}}(h(x),j (H(x,u)))$. We have $\la(\tilde{H}(x,u)) = j (H(x,u))$
hence $\Psi_2 \circ \tilde{H} = H$, and $\tilde{H}(x,0) = h(x)$ for all $x \in X$, therefore $\tilde{H}$ provides the lifting we want.
\end{proof}

Altogether, these statements imply the following result, which completes the proof of Theorem \ref{theo:maintheorem}.

\begin{theorem} \label{theo:fibrations}  For an arbitrary simplicial complex $\KK$, the maps $\Psi_{\KK}$ and $\overline{\Psi}_{\KK}$
are Serre fibrations and weak homotopy equivalences. If $\KK$ is finite, then $\Psi_{\KK}$ and $\overline{\Psi}_{\KK}$
are Hurewicz fibrations and homotopy equivalences.
\end{theorem}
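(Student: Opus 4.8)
The plan is to bolt together the homotopy-lifting results of the previous subsection with the contractibility of the fibres and the existence of the global section $\sigma$.

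\textbf{Serre fibration.} First I would show that $\Psi_{\KK}$ and $\bar{\Psi}_{\KK}$ have the HLP with respect to every sphere $S^n$. By Lemma~\ref{lem:redcardinal} it suffices to treat the maps $\Psi_{\gamma},\bar{\Psi}_{\gamma}$ attached to $\PF^*(\gamma)$, where $\gamma$ is the cardinality of $\bigcup\KK$. If $\gamma$ is finite or countable, Proposition~\ref{prop:HLPaleph0} applied with the compact space $X=S^n$ gives the HLP directly. If $\gamma$ is uncountable, I would invoke Lemma~\ref{lem:redaleph0}: the sphere $S^n$ is separable, and $\Psi_{\aleph_0},\bar{\Psi}_{\aleph_0}$ have the HLP with respect to $S^n$ (again by Proposition~\ref{prop:HLPaleph0}, since $S^n$ is compact), so the HLP propagates to every infinite $\gamma$. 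In every case Lemma~\ref{lem:redcardinal} then yields the HLP for $\Psi_{\KK},\bar{\Psi}_{\KK}$ with respect to $S^n$; as $n$ is arbitrary, both maps are Serre fibrations.

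\textbf{Weak homotopy equivalence.} Next I would feed this into the long exact homotopy sequence of a Serre fibration. The fibres of $\Psi_{\KK}$ and $\bar{\Psi}_{\KK}$ are contractible by Keane's theorem \cite{KEANE}, and both maps are surjective since they admit global continuous sections. Hence in the sequence $\cdots\to\pi_n(\text{fibre})\to\pi_n(\text{total})\to\pi_n(\text{base})\to\cdots$ the fibre terms vanish, so the induced maps on all homotopy groups are isomorphisms (the section supplying surjectivity and a compatible choice of basepoints on each component). Thus $\Psi_{\KK}$ and $\bar{\Psi}_{\KK}$ are weak homotopy equivalences, which settles the first assertion.

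\textbf{Finite case.} When $\KK$ is finite the vertex set is finite, so $\gamma$ is finite and Proposition~\ref{prop:HLPaleph0} grants the HLP with respect to \emph{arbitrary} $X$; moreover here $L(\Omega,\KK)=\bar{L}(\Omega,\KK)$ and $|\KK|_1=\overline{|\KK|_1}$, so $\Psi_{\KK}=\bar{\Psi}_{\KK}$. By Lemma~\ref{lem:redcardinal} this map is therefore a Hurewicz fibration. To upgrade the weak equivalence to a genuine one, I would observe that $|\KK|_1=|\KK|$ is a finite $CW$ complex, that the section $\sigma$ is exactly a fibrewise map from the trivial fibration $\mathrm{id}_{|\KK|}$ to $\Psi_{\KK}$, and that over each point $b$ this map is the inclusion of $\sigma(b)$ into the contractible fibre, hence a homotopy equivalence on every fibre. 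Dold's theorem on fibre homotopy equivalences (applicable because the finite complex $|\KK|$ is paracompact, so the Hurewicz fibration is numerable) then forces $\sigma$ to be a fibre homotopy equivalence; since $\Psi_{\KK}\circ\sigma=\mathrm{id}$, both $\sigma$ and $\Psi_{\KK}$ are strong homotopy equivalences.

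\textbf{Main obstacle.} The first two steps are essentially bookkeeping over the established lemmas. The delicate point is the last one: passing from \emph{contractible fibres} to a \emph{global} homotopy equivalence in the finite case. The fibrewise contractions produced by Keane's theorem must be made to cohere into a single vertical deformation of $L(\Omega,\KK)$ onto the image of $\sigma$, and this is precisely where Dold's partition-of-unity machinery over the paracompact base $|\KK|$ is needed; alternatively one may try to show that the metric total space $L(\Omega,\KK)$ has the homotopy type of a $CW$ complex and conclude by Whitehead's theorem. Carefully checking the hypotheses of whichever of these two routes one adopts is where the real work will lie.
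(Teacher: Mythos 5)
Your proposal is correct, and its first step (reducing the Serre-fibration claim to Lemmas \ref{lem:redcardinal} and \ref{lem:redaleph0} together with Proposition \ref{prop:HLPaleph0}, using separability and compactness of the spheres) is exactly the paper's argument. Where you diverge is in how the fibres enter. The paper never invokes Keane's theorem nor Dold's theorem here: it picks a vertex $\{ x_0 \} \in \KK$ and observes that the fibre of $\Psi_{\KK}$ (and of $\overline{\Psi}_{\KK}$) over the corresponding Dirac mass $\tilde{x}_0 \in |\KK|_1$ is literally a single point, namely the a.e.\ constant random variable equal to $x_0$. Fed into the long exact sequence of the Serre fibration (each path component of $|\KK|_1$ contains such a vertex), this one observation replaces your appeal to contractibility of \emph{all} fibres via \cite{KEANE}; and in the finite case the same observation shows the homotopy fibre of the Hurewicz fibration is contractible, from which the paper deduces the homotopy equivalence in one line. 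Your route -- Keane's theorem for the fibres plus Dold's fibre-homotopy-equivalence theorem applied to the section $\sigma$ over the paracompact base $|\KK|$ -- is valid, and it has the merit of making explicit the step the paper leaves implicit: upgrading ``Hurewicz fibration with contractible (homotopy) fibre'' to ``homotopy equivalence'' genuinely requires a Dold-type or Whitehead-type argument, exactly the obstacle you identify. So the two proofs have the same skeleton; the paper's choice of the point fibre over a vertex simply bypasses Keane's theorem and shortens the bookkeeping, at the cost of compressing the final Dold step into a single sentence.
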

\begin{proof} Let $\KK$ be an arbitrary simplicial complex. We first prove that $\Psi_{\KK}$ and $\overline{\Psi}_{\KK}$
are Serre fibrations. By Lemmas \ref{lem:redcardinal} and \ref{lem:redaleph0}, and since the $n$-spheres are separable spaces,
we can restrict ourselves to proving the same statement for $\Psi_{\gamma}$
and $\overline{\Psi}_{\gamma}$ when $\gamma \leq \aleph_0$, and this is true in this case because the $n$-spheres are compact,
by Proposition \ref{prop:HLPaleph0}. Let us now choose  $\{ x _0 \} \in \KK$, and define $\tilde{x}_0 : S \to [0,1]$ to be given by $x_0 \mapsto 1$
and $x \mapsto 0$ if $x \neq x_0$. Then $\tilde{x}_0 \in |\KK|_1$, and the fiber above it  $\Psi_{\KK}^{-1}(\{ \tilde{x}_0 \})=
\overline{\Psi}^{-1}_{\KK}(\{ \tilde{x}_0 \})$ is a point. Since these two maps are Serre fibrations this implies that they
are weak homotopy equivalences.

We now prove the second part of the statement.
If $\KK$ is a finite simplicial complex, by Lemma \ref{lem:redcardinal} and Proposition \ref{prop:HLPaleph0} we get that 
$\Psi_{\KK}$ and $\overline{\Psi}_{\KK}$ are Hurewicz fibrations. Picking again some 
$\{ x _0 \} \in \KK$ we get that the fiber above some point is itself a point, whence the homotopy fiber
of these Hurewicz fibrations is contractible and they are homotopy equivalences. This proves the claim.

\end{proof}

\end{document}